\documentclass[leqno, letterpaper, 11pt]{article}
\usepackage{amssymb,enumerate}
\usepackage{amsmath}
\usepackage{amsfonts}
\usepackage{amsthm}
\usepackage[usenames]{color}
\usepackage{graphicx}
\usepackage{verbatim}
\usepackage[reftex]{theoremref}
\usepackage{bbm}
\usepackage{hyperref}
\hypersetup{colorlinks=true,
            citecolor=green!40!black,
            linkcolor=red!20!black,
            urlcolor=blue!40!black,
            filecolor=cyan!30!black}
            
\usepackage{caption}
\usepackage{subcaption}
\usepackage{mathtools}

\usepackage{authblk}
\usepackage{lineno}

\makeatletter
\DeclareRobustCommand\widecheck[1]{{\mathpalette\@widecheck{#1}}}
\def\@widecheck#1#2{%
    \setbox\z@\hbox{\m@th$#1#2$}%
    \setbox\tw@\hbox{\m@th$#1%
       \widehat{%
          \vrule\@width\z@\@height\ht\z@
          \vrule\@height\z@\@width\wd\z@}$}%
    \dp\tw@-\ht\z@
    \@tempdima\ht\z@ \advance\@tempdima2\ht\tw@ \divide\@tempdima\thr@@
    \setbox\tw@\hbox{%
       \raise\@tempdima\hbox{\scalebox{1}[-1]{\lower\@tempdima\box
\tw@}}}%
    {\ooalign{\box\tw@ \cr \box\z@}}}
\makeatother

\usepackage{pgf,tikz}
\usetikzlibrary{through,calc,positioning,decorations.pathreplacing, arrows.meta}

\usetikzlibrary{arrows}

\graphicspath{{figures/}}

\newcommand{\bR}{{\mathbb R}}
\newcommand{\bZ}{{\mathbb Z}}

\newcommand{\cF}{{\mathcal F}}

\newcommand{\cY}{{\mathcal Y}}
\newcommand{\cZ}{{\mathcal Z}}

\newcommand{\cN}{{\mathcal N}}
\newcommand{\cV}{{\mathcal V}}

\newcommand{\cO}{\mathcal O}

\newcommand{\cT}{\mathcal T}

\newcommand{\cS}{\mathcal S}

\newcommand{\0}{{\it 0\/}}

\newcommand{\vr}{\vec r}
\newcommand{\vc}{\vec c}

\newcommand{\ext}{\mathrm{ex}}

\newcounter{tbox}

\makeatletter
\newcommand{\leqnomode}{\tagsleft@true}
\newcommand{\reqnomode}{\tagsleft@false}
\makeatother

\newcommand{\sta}[1]{\vspace*{0.3cm}  \refstepcounter{tbox}\noindent{ 
\parbox{\textwidth}{(\thetbox) \emph{#1}}}\vspace*{0.2cm}}

\newcounter{mycount}

\setlength{\textwidth}{16cm}
\setlength{\textheight}{21cm}
\setlength{\oddsidemargin}{0cm}
\setlength{\evensidemargin}{0cm}
\setlength{\topmargin}{0cm}
\setlength{\parskip}{.4\baselineskip}
\setlength{\topsep}{.4\baselineskip}

\widowpenalty=10000\clubpenalty=10000

\newtheorem{theorem}{Theorem}[section]
\newtheorem{corollary}[theorem]{Corollary}
\newtheorem{lemma}[theorem]{Lemma}
\newtheorem{prop}[theorem]{Proposition}

\theoremstyle{definition}

\newtheorem{definition}[theorem]{Definition}

\newcounter{example}

\newcounter{open}

\newcounter{figno}

\newcounter{tableno}

\numberwithin{equation}{section}
\numberwithin{figure}{section}
\numberwithin{table}{section}
\numberwithin{example}{section}

\newcommand{\floor}[1]{\left\lfloor#1\right\rfloor}

\newcommand{\Z}{\cZ}

\newcommand{\row}{\text{\tt row}}
\newcommand{\col}{\text{\tt col}}

\newcommand{\gammap}{\widehat{\gamma}}
\newcommand{\gammad}{\overline{\gamma}}

\definecolor{newc}{rgb}{0.9,0,0}


\title {\bf Young domination on Hamming rectangles}

\author[1]{Janko Gravner}
\author[2]{Matjaž Krnc}
\author[2]{Martin Milanič}
\author[3]{Jean-Florent Raymond}

\affil[1]{Department of Mathematics, University of California, Davis {\tt gravner@math.ucdavis.edu}}
\affil[2]{FAMNIT and IAM, University of Primorska, Koper, Slovenia
{\tt $\{$matjaz.krnc,martin.milanic$\}$@upr.si}}
\affil[3]{Univ.\ Lyon, CNRS, ENS de Lyon, Université Claude Bernard Lyon 1, LIP UMR5668,
  Lyon, France, {\tt jean-florent.raymond@cnrs.fr}}

\date{}
\begin{document}

\maketitle

\begin{abstract}
We introduce a family of domination-type problems in Cartesian products of two graphs. 
The framework captures several well-studied topics, including variants of bootstrap percolation, line growth, distance domination, and target set selection.
We focus on Cartesian products of two complete graphs and formulate 
the notion of \textsl{Young domination number}  
in terms of a growth rule determined by a Young diagram; this number is the smallest cardinality of an initial set that covers the entire vertex set in a prescribed number $L$ of iterations of the rule. 

We compute the Young domination number with $L=1$ for several natural cases, including $k$-domination for Cartesian products of two complete graphs of the same order, thereby proving a conjecture from 2009 due to Burchett, Lane, and Lachniet.
We show that the case of $L=1$ of Young domination is equivalent to computing bipartite Tur\'an numbers for families of double stars, yielding implications of our results in extremal graph theory.
For arbitrary fixed $L$, we devise constant-factor approximation algorithms for the problem.
Our approach is based on a variety of techniques, including duality between Young diagrams, algebraic formulations, explicit constructions, and dynamic programming.
\end{abstract}

\let\thefootnote\relax\footnote{\small {\it AMS 2000 subject classification\/}: 05C35 (primary), 05C69, 05C85, 68W25.}
\let\thefootnote\relax\footnote{\small {\it Key words and phrases\/}:
growth processes, bootstrap percolation, Young domination, $k$-domination, bipartite Turán numbers, Hamming graph, latency, spanning set.}

\section{Introduction}\label{sec-intro}

\newcommand{\cI}{\mathcal I}
\newcommand{\bN}{\mathbb N}
\newcommand\Osq{\mathbin{\text{\scalebox{.84}{$\square$}}}}

\subsection{Motivation}

Looking for the smallest set of vertices in a graph with a given property is a natural and fundamental problem in extremal combinatorics and algorithmic graph theory. 
Variants of domination in graphs, in particular, have proved useful in many real-life contexts.
We introduce \textsl{Young domination}, a family of domination-type problems in Cartesian products of graphs.  
This framework provides a common generalization of various well-studied problems, including 
bootstrap percolation \cite{GHPS}, 
line growth \cite{GSS} (also known as line percolation \cite{BBLN}), 
distance domination (see, e.g.,~\cite{Hen}), 
$k$-domination on rook's graphs (see~\cite{BLL}, as well as~\cite[Section 8]{HH}), 
and variants of target set selection with constant thresholds (see, e.g.,~\cite{CHLWY,LYW,JW}).
Our problems are formulated in terms of a growth rule determined by a Young diagram, which ensures monotonicity; without this property, 
related
questions become much less tractable (see, e.g., \cite{GG2}).
While we begin with a more general setup, we focus on growth dynamics on Cartesian products of two complete graphs, arguably the simplest nontrivial class.

\subsection{Young domination}

 We now proceed with formal definitions, starting with the growth rule parametrized by a Young diagram. 
The corresponding Young domination number will then be the smallest cardinality of an initial set that covers the entire vertex set in a prescribed number of iterations of the rule.  

Consider two finite graphs $G_1$ and $G_2$. 
The \emph{Cartesian product} of $G_1$ and $G_2$ is the graph $G_1\Box G_2$ with vertex set $V(G_1)\times V(G_2)$ in which two vertices $(u_1,u_2)$ and $(v_1,v_2)$ are adjacent if and only if $u_1 = v_1$ and $u_2v_2\in E(G_2)$ or $u_2 = v_2$ and $u_1v_1\in E(G_1)$.
Each vertex $v=(v_1,v_2)\in V(G_1\Box G_2)$ has neighbors divided into two disjoint sets: $\cN^{r}(v)$, the \emph{row neighborhood} of $v$, defined as the set of all vertices $(v_1,y)$ such that $yv_2$ is an edge of $G_2$, and $\cN^{c}(v)$, the \emph{column neighborhood} of $v$, defined as the set of all vertices $(x,v_2)$ such that $xv_1$ is an edge of~$G_1$.

For an integer $c\in \bZ_+=\{0,1,2,\ldots\}$, we denote by $[0,c]$ the set $\{0,1,\ldots, c\}$.
For integers $a,b\ge 1$, we let $R_{a,b}=[0,a-1]\times [0,b-1]\subseteq\bZ_+^2$ be the discrete $a\times b$ rectangle.
A set $\Z=\bigcup_{(a,b)\in I} R_{a,b}$, given by a union
of rectangles over some set $I\subseteq \bN^2$, is called a (discrete) {\em zero-set\/}. 
We allow the trivial case $\Z=\emptyset$ and also infinite $\cZ$. 
Observe that zero-sets are exactly 
\emph{downward-closed} sets in the sense that if $(x,y)\in \Z$ and $0\le x'\leq x$, $0\le y'\leq y$ then $(x', y') \in \Z$. 
Therefore, zero-sets are \emph{Young diagrams} in the French notation \cite{Rom}.
Each zero-set determines a type of domination 
on Cartesian products of two graphs as follows. 

Given a zero-set $\cZ$, the \emph{transformation} $\cT = \cT(\cZ)$, defined on sets of vertices of $G_1\Box G_2$, assigns to every set $A\subseteq V(G_1\Box G_2)$ a set $
\cT(A)\subseteq V(G_1\Box G_2)$ as follows:
\begin{itemize}
\item the input set is enlarged, i.e., if $v\in A$ then $v\in \cT(A)$;
\item if $v \in V(G_1\Box G_2)\setminus A$, then $v\in \cT(A)$ if and only if $(|\cN^{r}(v)\cap A|,|\cN^{c}(v)\cap A|)\notin\cZ$.
\end{itemize}  
Thus, the iteration $\cT^t(A)$ yields a growth dynamics with 
initial set $A=\cT^0(A)$, and we define 
$\cT^\infty(A)=\bigcup_{t\ge 0} \cT^t(A)$. Observe that
$\cT$ is {\it monotone\/}: $A_1\subseteq A_2$ implies 
$\cT(A_1)\subseteq \cT(A_2)$; in fact, $\cT$ is monotone if and only if
its zero-set $\cZ$ is downward closed. We call the transformation $\cT$, 
or equivalently, the resulting growth dynamics the {\it neighborhood growth\/}.  

For $L=0, 1,2, \ldots,\infty$, we say that $A$ is a \emph{$\cZ$-dominating set with latency $L$} if $\cT^L(A)=V(G_1\Box G_2)$. Our focus is on smallest such sets, 
so we define the \emph{$\cZ$-domination number} of $(\cZ,G_1,G_2)$ \emph{with latency $L$} as
\[
\gamma^L=\gamma^L(\cZ, G_1,G_2)=\min\{|A|\colon \cT^L(A)=V(G_1\Box G_2)\}.
\]
It is trivially true that $\gamma^0(\cZ, G_1,G_2)=|V(G_1\Box G_2)|$.
When the latency is not specified, we assume the default choice $L=1$, 
often omit it from the notation and call $\gamma=\gamma(\cZ, G_1,G_2)=\gamma^1(\cZ, G_1,G_2)$ the \emph{$\cZ$-domination number} of $(\cZ,G_1,G_2)$.   
In particular, if $a\in \mathbb{N}$ and $\cZ$ is the triangle 
\begin{equation}\label{eq-bootstrap}
T_a=\{(x,y)\in \mathbb{Z}_+^2\colon x+y\le a-1\}, 
\end{equation} 
then $\gamma(T_a, G_1,G_2)$ is exactly the $a$-domination number of the product graph $G_1\Box G_2$ \cite{HV}. 
Furthermore, $\gamma^L(T_1, G_1,G_2)$ is exactly the distance-$L$ domination number of the product graph $G_1\Box G_2$, for any finite $L$ (see, e.g.,~\cite{Hen}).
In our general discussions, or if $\cZ$ and $L$ are clear from the context, we refer to $\cZ$-domination with latency $L$ as {\it Young domination\/}.
 
Perhaps the most natural special case of Young domination is where the two factors are complete graphs, $G_1=K_m$ and $G_2=K_n$ on vertex sets $[0,m-1]$ and $[0,n-1]$, in which case we write 
$$
\gamma^L=\gamma^L(\cZ, m,n)=\gamma^L(\cZ, K_m,K_n)\,.
$$
The graph $K_m\Box K_n$ is known as a \emph{Hamming rectangle}, and neighborhood growth on this graph with general $\cZ$ was studied in \cite{GSS, GPS}. 
As in these references, we represent this graph as having vertex set $R_{n,m}$; see the beginning of Section~\ref{sec-prelim} for more details. 
We focus solely on the neighborhood growth dynamics on Hamming rectangles for the remainder of the paper.
See Figure~\ref{fig:intro} for an example.

\subsection{Main results}

We begin with three special cases where Young domination number is given by simple expressions. 
The easiest are the $L$-shaped Young diagrams, given by zero-sets
$V_{a,b}$, where $a,b\ge 0$ and
\begin{equation}\label{eq-L-shape}
    (V_{a,b})^c=\mathbb{Z}_+^2\setminus V_{a,b} = [a,\infty)\times [b,\infty), 
\end{equation}
i.e., a point requires at least $a$ occupied row neighbors \emph{and} at least $b$ occupied column neighbors to become occupied.

\begin{prop}\label{prop-L-shape}
 Assume that $\cZ=V_{a,b}$, where $0\le a\le n$, $0\le b\le m$. 
Then $\gamma(\cZ,m,n)=\max(am, bn)$. 
\end{prop}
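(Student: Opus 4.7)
My plan is to establish the matching lower and upper bounds $\gamma(V_{a,b},m,n)=\max(am,bn)$.

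For the lower bound, suppose $A\subseteq R_{n,m}$ is any $V_{a,b}$-dominating set with latency $1$. I claim that $|A\cap R|\ge a$ for every row $R$ of the rectangle (a row has $n$ cells). Indeed, if some $v\in R\setminus A$ exists, then $v$ must become occupied at step $1$, so $|\cN^r(v)\cap A|\ge a$; since $\cN^r(v)=R\setminus\{v\}$ and $v\notin A$, this gives $|A\cap R|\ge a$. Otherwise $R\subseteq A$, so $|A\cap R|=n\ge a$ (using $a\le n$). Summing over the $m$ rows yields $|A|\ge am$, and the symmetric argument applied to the $n$ columns yields $|A|\ge bn$, hence $|A|\ge \max(am,bn)$.

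For the upper bound, by the obvious symmetry transposing the rectangle and swapping $(a,b)$ with $(b,a)$, I may assume $am\ge bn$. It suffices to exhibit $A\subseteq R_{n,m}$ of cardinality $am$ whose intersection with every row has exactly $a$ cells and whose intersection with every column has at least $b$ cells: any $v\notin A$ then satisfies $|\cN^r(v)\cap A|=a$ and $|\cN^c(v)\cap A|\ge b$, placing the pair in $[a,\infty)\times[b,\infty)=V_{a,b}^c$, and so $v$ is added at step $1$. Producing such $A$ amounts to finding a $0$-$1$ matrix in $\{0,1\}^{m\times n}$ with every row sum equal to $a$ and prescribed column sums $c_1,\ldots,c_n\in\{\lfloor am/n\rfloor,\lceil am/n\rceil\}$ summing to $am$. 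Since $b$ is an integer and $am\ge bn$, we have $\lfloor am/n\rfloor\ge b$, so every $c_j\ge b$. Existence of such a matrix follows from the Gale--Ryser theorem, whose majorization condition reduces here to the easily verified inequalities $a\le n$ and $k\le m$.

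The principal technical step is the construction of the $0$-$1$ matrix, which I expect to be the main obstacle; Gale--Ryser dispatches it cleanly. Should one prefer a self-contained argument, a balanced greedy construction works: process rows in any order, and for each row place its $a$ ones in the columns with the currently smallest counts. A standard exchange argument shows that the final column counts all lie in $\{\lfloor am/n\rfloor,\lceil am/n\rceil\}$, so each column count is at least $\lfloor am/n\rfloor\ge b$, completing the proof.
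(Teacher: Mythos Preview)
Your proof is correct and follows essentially the same route as the paper: an identical counting lower bound, and for the upper bound the same reduction to producing a $0$-$1$ array with constant row sums $a$ and column sums at least $b$. The only difference is cosmetic: the paper supplies this array via an explicit shifted-interval construction (its Lemma~\ref{lemma-ab-fill}), whereas you invoke Gale--Ryser or a balanced greedy fill. One small remark: your phrase ``reduces here to the easily verified inequalities $a\le n$ and $k\le m$'' is cryptic; the clean way to check Gale--Ryser here is to note that $\sum_{j\le k}c_j\le k\lceil am/n\rceil\le km$ when $k<a$, and $\sum_{j\le k}c_j\le am$ when $k\ge a$, both bounded by $m\cdot\min(a,k)$ since $a\le n$.
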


Next we consider rectangular zero-sets $R_{a,b}$, whereby a point
with at least $a$ occupied row neighbors \emph{or} at least $b$ occupied columns neighbors becomes occupied. In this case,
$\gamma$ is given by the following optimization procedure. 

\begin{theorem}\label{thm-line} Assume that $\cZ=R_{a,b}$, where $1\le a\le n$, $1\le b\le m$. 
Then
\begin{equation}\label{eq-line-min}
    \gamma(\cZ,m,n)=\min_{\substack{b\le x\le m\\ a\le y\le n}} \left( (m-x)(n-y)+\max(ax,by)\right), 
\end{equation}
which can be computed in time $\cO(\min(a,b)/\gcd(a,b))$.
In particular, for $a=b$ and $m=n$,  
\begin{equation}\label{eq-line-sym}
\gamma(\cZ,n,n)=
\begin{cases}
    an-a^2/4\,, &\text{if $a\le 2n/3$ and $a$ is even,}\\ 
    an-(a^2-1)/4\,, &\text{if $a\le (2n+1)/3$ and $a$ is odd,}\\
    a^2+(n-a)^2,& \text{otherwise}.
\end{cases}
\end{equation}
\end{theorem}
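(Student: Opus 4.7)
The plan is to prove \eqref{eq-line-min} by matching upper and lower bounds, then verify the computational complexity bound and deduce \eqref{eq-line-sym} as a specialization. For brevity write $f(x,y)=(m-x)(n-y)+\max(ax,by)$.

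For the upper bound $\gamma(\cZ,m,n)\le f(x,y)$ at any $(x,y)\in[b,m]\times[a,n]$, I would choose arbitrary sets of $x$ rows and $y$ columns and take $A$ to consist of all $(m-x)(n-y)$ cells outside these rows and columns, together with $\max(ax,by)$ cells inside the $x\times y$ intersection block placed so that every chosen row contains at least $a$ cells of $A$ and every chosen column at least $b$ cells of $A$. Since $x\ge b$ and $y\ge a$ force $\max(ax,by)\le xy$, such a placement exists by a standard bipartite degree-sequence (Gale--Ryser) argument, and one checks directly that every cell is in $A$, in a row with $\ge a$ cells of $A$, or in a column with $\ge b$ cells of $A$.

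For the lower bound, given any dominating $A$, let $\rho$ (resp.\ $\chi$) be the number of rows (resp.\ columns) containing at least $a$ (resp.\ $b$) cells of $A$; the cells outside these $\rho$ rows and $\chi$ columns are forced into $A$ and contribute $(m-\rho)(n-\chi)$. Treating the cells of $A$ in the remaining three regions (the $\rho\times\chi$ intersection, the row strip, and the column strip) as nonnegative integer unknowns subject to the aggregate constraints ``total of $A$ in chosen rows $\ge a\rho$'', ``total of $A$ in chosen columns $\ge b\chi$'', and the capacity $\rho\chi$ of the intersection, a short LP analysis yields a lower bound on the remaining cells equal to $\max(a\rho,b\chi)$, $a\rho$, $b\chi$, or $a\rho+b\chi-\rho\chi$ according to whether $a\le\chi$ and/or $b\le\rho$ holds. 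In each of the four cases I match this to $f(x^\ast,y^\ast)$ at an explicit valid point, namely $(\rho,\chi)$, $(\rho,a)$, $(b,\chi)$, or $(b,a)$; the first three give immediate linear comparisons, while the fourth reduces to the identity
\[
(m-\rho)(n-\chi)+a\rho+b\chi-\rho\chi-\bigl[(m-b)(n-a)+ab\bigr]=(n-a)(b-\rho)+(m-b)(a-\chi),
\]
which is non-negative precisely because $\rho<b$ and $\chi<a$ in that case.

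For the computational claim, $f$ is bilinear on each of the half-planes $\{ax\ge by\}$ and $\{ax\le by\}$, so for fixed $x$ the optimal integer $y$ lies in the short list $\{a,n,\lfloor ax/b\rfloor,\lceil ax/b\rceil\}$; restricting to the diagonal $ax=by$ where $f$ becomes a convex quadratic of one variable, the candidate $x$ are precisely those in $[b,m]$ for which $ax/b$ is an integer, clustered around the continuous minimum, yielding $\cO(\min(a,b)/\gcd(a,b))$ candidates in total. For \eqref{eq-line-sym} with $m=n$ and $a=b$, symmetry of $f$ together with a short monotonicity check forces the optimum to the diagonal $x=y=t$, reducing to the convex quadratic $g(t)=(n-t)^2+at$ on $t\in[a,n]$; its continuous minimizer $t=n-a/2$ lies in the interval iff $a\le 2n/3$ (giving the first two branches, with integer rounding when $a$ is odd), and otherwise the boundary $t=a$ supplies the third branch $a^2+(n-a)^2$.

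The main obstacle will be the lower-bound case analysis when $\rho<b$ and $\chi<a$ simultaneously: identifying the correct LP (with the sharper value $a\rho+b\chi-\rho\chi$, strictly below the naïve $\max(a\rho,b\chi)$) and then uncovering the clean telescoping identity above to compare against $f(b,a)$. A secondary technical point is verifying the bipartite degree-sequence feasibility of the upper-bound placement when $\max(ax,by)$ is not evenly divisible by $x$ or by $y$.
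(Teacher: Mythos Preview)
Your upper bound and the derivation of \eqref{eq-line-sym} are essentially the paper's arguments (the paper uses an explicit circulant construction, Lemma~\ref{lemma-ab-fill}, where you invoke Gale--Ryser, but this is cosmetic).

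Your lower bound is correct and genuinely different. The paper does \emph{not} introduce the pair $(\rho,\chi)$ of heavy-row and heavy-column counts; instead it takes $x$ to be the number of rows with at least $a$ elements (your $\rho$) but defines $y$ as the number of columns that are \emph{not entirely occupied within the light rows}. With this asymmetric choice the full $(m-x)\times(n-y)$ block sits inside $A$ automatically, the bounds $|A|\ge (m-x)(n-y)+ax$ and $|A|\ge (m-x)(n-y)+by$ hold by disjointness, and one only needs a monotonicity argument to push $x,y$ up to $b,a$. Your symmetric $(\rho,\chi)$ decomposition loses the full-block property and forces the LP step and four-way case split; the identity you wrote down for the case $\rho<b$, $\chi<a$ is correct and does close the argument. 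Both proofs are short; the paper's is slicker, yours is more systematic and would generalize more readily.

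Your sketch of the $\cO(\min(a,b)/\gcd(a,b))$ running time has a gap. Saying that for fixed $x$ the optimal $y$ lies in $\{a,n,\lfloor ax/b\rfloor,\lceil ax/b\rceil\}$ is fine, but this still leaves $\cO(m)$ values of $x$ to try, and the phrase ``clustered around the continuous minimum'' does not reduce that to the claimed bound. The paper's mechanism is different: assuming $a\le b$, it fixes $y$ instead, observes that the optimal $x\in\{b,m,\lfloor by/a\rfloor,\lceil by/a\rceil\}$, and then writes $y=ja_1+r$ with $a_1=a/\gcd(a,b)$. For each fixed residue $r\in[0,a_1)$ the floor and ceiling become affine in $j$, so the objective is a quadratic in $j$ that is minimized in $\cO(1)$; the outer loop over $r$ gives the stated $\cO(a_1)$ bound. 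Your plan does not contain this periodicity-in-the-residue idea, which is the actual source of the running-time estimate.
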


Our most substantial exact result gives a complete solution to Young domination for triangular Young diagrams $T_a$ from
(\ref{eq-bootstrap}) with $m=n$. 
This case was considered in \cite{BLL}, where various partial results were given. 
In particular, the formula (\ref{eq-boot-even-a}) for even $a$ confirms Conjecture~1 in that paper. 
Note that for $a\ge 2n-1$, the only Young dominating set is $[0,n-1]^2$, so we exclude this case from consideration.  

\begin{theorem}\label{thm-boot} Assume that $n\ge 2$ and let $\cZ=T_a$ be given by (\ref{eq-bootstrap}), where $1\le a\le 2n-2$. If $a$ is even, then 
\begin{equation}\label{eq-boot-even-a}
    \gamma(\cZ,n,n)=\frac{an}2\,.
\end{equation}
If $a$ is odd and $a\le n$, then 
\begin{equation}\label{eq-boot-odd-a<n}
    \gamma(\cZ,n,n) = \frac{a+1}{2}\,n-\frac{a-1}{2}\,.
\end{equation}
Finally, if $a$ is odd and $a>n$, then 
\begin{equation}\label{eq-boot-odd-a>n-simpl}
\gamma(\cZ,n,n)=
\frac{a-1}{2}\, n+
\left\lceil\frac{n(2n-a+1)}{2(2n-a)}
\right\rceil.
\end{equation}
\end{theorem}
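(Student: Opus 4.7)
The plan is to address the three cases separately, producing in each an explicit upper-bound construction together with a matching lower bound via row/column counting. Throughout, for a candidate set $A \subseteq [0,n-1]^2$ with row counts $r_i$, column counts $c_j$, and $N = |A|$, recall that $A$ is $T_a$-dominating iff every empty cell $(i,j)\notin A$ satisfies $r_i + c_j \ge a$.

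For the even case $a = 2k$, an explicit $A$ with $k$ cells per row and per column (for example the cyclic band $\{(i,j) : (j-i) \bmod n \in [0,k-1]\}$) realizes $N = kn = an/2$, and every empty cell has $r_i + c_j = 2k$. For the lower bound, summing $r_i + c_j \ge a$ over empty cells gives $2nN - \sum r_i^2 - \sum c_j^2 \ge a(n^2 - N)$; combined with Cauchy--Schwarz $\sum r_i^2, \sum c_j^2 \ge N^2/n$, this produces a quadratic in $N$ whose discriminant simplifies to $(2n+a)^2 - 8an = (2n-a)^2$, giving roots $an/2$ and $n^2$, so $N \ge an/2$.

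For the odd case $a = 2k+1$ with $a \le n$, the upper bound is realized by placing a $k \times k$ corner entirely in $A$, together with cells in the complementary $(n-k) \times (n-k)$ block forming a $(k+1)$-regular bipartite structure (each row and column of this block carrying exactly $k+1$ cells), which exists since $k+1 \le n-k$ in this regime. This gives $|A| = k^2 + (n-k)(k+1) = (k+1)n - k$. For the lower bound, the key observation is that the sets $R = \{i : r_i \le k\}$ and $C = \{j : c_j \le k\}$ satisfy $R \times C \subseteq A$ (otherwise some $(i,j)$ with $r_i + c_j \le 2k < a$ would be uncovered), and also $|R|, |C| \le k$. If $R$ or $C$ is empty, then every row (respectively column) has count at least $k+1$ and $N \ge (k+1)n$. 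Otherwise, with $r = |R|, c = |C| \in [1,k]$, a combination of the row-sum bound $N \ge rc + (n-r)(k+1)$, its symmetric column version, and the sharper constraints $c_j \ge a - r_i$ for empty cells in $R \times C^c$ delivers $N \ge (k+1)n - k$. This structural refinement is essential, as the naive counting argument from the even case only yields $N \ge \lceil an/2 \rceil$.

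For the odd case $a > n$, the construction is less symmetric: some columns of $A$ are forced to be fully occupied while others have counts in $\{k, k+1\}$, with row counts arranged similarly so that every empty cell barely satisfies $r_i + c_j \ge a$. The ceiling $\lceil n(n-k)/(2n-a) \rceil$ emerges as the minimum number of cells beyond $kn$ needed to reconcile the overlapping constraints. The lower bound again works through $R$ and $C$, but now the integrality is subtle: rather than using averages, one must track how many columns in $C^c$ are forced to exceed $k+1$ by empty cells in small rows with $r_i < k$. I expect this final case to be the main obstacle, since both the construction and the matching integer lower bound require more delicate choices than the earlier cases, and capturing the precise ceiling expression demands an integrality argument at the critical boundary.
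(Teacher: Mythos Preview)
Your even case is correct and matches the paper exactly: the cyclic band for the upper bound and the summed inequality plus Cauchy--Schwarz for the lower bound (the quadratic factors as $(2\sigma - an)(\sigma - n^2)\le 0$).

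For odd $a=2k+1\le n$, your upper bound is the paper's construction. Your lower-bound \emph{setup} is also sound: $R\times C\subseteq A$ is correct, and from it $|C|\le \min_{i\in R} r_i\le k$ (and symmetrically $|R|\le k$) does follow. But the counting you then propose does \emph{not} close. With $r=|R|$, $c=|C|$, the bound $N\ge rc+(n-r)(k+1)$ only gives $N\ge (k+1)n - r(k+1-c)$, and for $r=k$, $c=1$ this is $(k+1)n-k^2$, short of $(k+1)n-k$ by $k^2-k$. The symmetric column bound and the ``sharper constraints $c_j\ge a-r_i$'' you invoke do not repair this: for $j\notin C$ the inequality $c_j\ge a-r_i$ with $r_i\le k$ just reproduces $c_j\ge k+1$, which you already used. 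What is missing is a preliminary step showing that (under $N<(k+1)n-k$) every row and column already has at least $k$ points. The paper proves this ``$k$-thickness'' by an inductive bootstrapping argument: assuming the set is $j$-thick for some $j<k$ but not $(j+1)$-thick, a row with exactly $j$ points forces $n-j$ columns to carry $\ge a-j$ points each, yielding $N\ge j^2+(n-j)(a-j)\ge (k+1)n-k$, a contradiction. Only after thickness is established can one argue (as the paper does) that at least $k+1$ rows are ``light'' (count exactly $k$), whence every column is forced to $\ge k+1$, and $N\ge (k+1)n$, the final contradiction. Your $R,C$ framework is compatible with this, but you need to supply the thickness step.

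For odd $a>n$, both halves of your sketch are off target. The paper's construction has \emph{no} fully occupied columns: it is a symmetric $2\times 2$ block matrix in which the upper-left $(m-\kappa)\times(m-\kappa)$ block is all ones (here $n=2m$ or $2m+1$), the off-diagonal blocks are filled via Lemma~\ref{lemma-ab-fill} to achieve row counts $b$ and column counts in $\{b,b+1\}$, and the lower-right block starts full and then has diagonal entries deleted so that every row and column of the whole matrix has count in $\{b,b+1\}$; the correction term $\kappa$ is the least integer making a consistency inequality $(m-\kappa)d\ge(m+\kappa)c$ hold. For the lower bound, the $R,C$ bookkeeping will not isolate the ceiling in \eqref{eq-boot-odd-a>n-simpl}. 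The paper instead refines the Cauchy--Schwarz step from the even case: rather than $\sum r_i^2\ge \sigma^2/n$, it uses the exact integer minimum $\mu(\sigma)=\min\{\sum r_i^2:\sum r_i=\sigma,\ r_i\in\bZ_+\}$, which equals $(n-t)\lfloor\sigma/n\rfloor^2+t(\lfloor\sigma/n\rfloor+1)^2$ with $t=\sigma-n\lfloor\sigma/n\rfloor$. Feeding this into the summed domination inequality gives a piecewise-linear condition on $\sigma$, and checking its sign at $\sigma=\frac{a-1}{2}n+\lceil n/2\rceil+\kappa-1$ (with $\kappa$ the correction term) is what pins down the exact value. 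Your proposed $R,C$ route would at best reproduce the cruder bound $\lceil an/2\rceil$.
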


For general zero-sets, the Young domination number for $L = 1$ is connected to certain bipartite Turán numbers; see Section~\ref{sec-extremal}.
However, the complexity of computing the Young domination number is open for any $L\ge 1$; see open problems \ref{open-problem-1} and \ref{open-problem-2} in Section~\ref{sec-open} for further discussion.
We provide polynomial-time approximation algorithms, the most precise of which is for latency $L = 1$.

\begin{theorem}\label{thm-3-approx}
There exists an algorithm polynomial in $n$ that takes as input $m,n\in \bN$ with $m\le n$ and a zero-set $\Z$ with $\Z\subseteq R_{n,m}$, and returns a number in the interval $[\gamma, 3\gamma]$ where $\gamma = \gamma(\Z,m,n)$.  
\end{theorem}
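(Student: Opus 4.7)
The plan is to exhibit the algorithm that outputs
\[
\widehat U \;:=\; \min_{(a,b)}\max(am,\,bn),
\]
where the minimum ranges over the corners of $\cZ$ (the minimal elements of $\bZ_+^2\setminus\cZ$). Since $\cZ\subseteq R_{n,m}$, every corner satisfies $a\le n$ and $b\le m$, and there are at most $\min(m,n)+1$ of them, so $\widehat U$ is computable in polynomial time. I will show $\gamma\le \widehat U\le 3\gamma$, where $\gamma:=\gamma(\cZ,m,n)$.

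The upper bound $\gamma\le \widehat U$ is immediate: for each corner $(a,b)$ one has $V_{a,b}^c=[a,\infty)\times[b,\infty)\subseteq \cZ^c$, so $\cZ\subseteq V_{a,b}$, and since $\gamma$ is monotone non-increasing in the zero-set, Proposition~\ref{prop-L-shape} gives $\gamma(\cZ,m,n)\le \gamma(V_{a,b},m,n)=\max(am,bn)$. Taking the minimum over corners finishes this direction.

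For the lower bound $\widehat U\le 3\gamma$, fix a minimum $\cZ$-dominating set $A$ and let $\alpha_i$ (resp.\ $\beta_j$) denote the number of $A$-points with first (resp.\ second) coordinate $i$ (resp.\ $j$). Set $p:=\lfloor 3\gamma/m\rfloor$ and $q:=\lfloor 3\gamma/n\rfloor$ and split into two cases. If $(p,q)\notin \cZ$, then the up-set $\cZ^c$ contains a corner $(a,b)\le (p,q)$, giving $am\le pm\le 3\gamma$, $bn\le qn\le 3\gamma$, and hence $\widehat U\le 3\gamma$. If instead $(p,q)\in \cZ$, a Markov-type averaging (using $\sum_i\alpha_i=\gamma$ and $p+1>3\gamma/m$) forces $|\{i:\alpha_i\le p\}|\ge 2m/3$ and analogously $|\{j:\beta_j\le q\}|\ge 2n/3$. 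For any vertex $v=(i,j)$ in the product of these two sets, $(\alpha_i,\beta_j)\le (p,q)\in \cZ$ implies $(\alpha_i,\beta_j)\in \cZ$ by downward closedness, which prevents $\cZ$-domination unless $v\in A$; hence the entire product lies in $A$ and $\gamma\ge (2m/3)(2n/3)=4mn/9$. Combining this with the trivial bound $\widehat U\le mn$ yields $\widehat U\le mn\le (9/4)\gamma<3\gamma$.

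The delicate point is the calibration of the threshold: the constant $3$ in the definition of $(p,q)$ is chosen precisely so that Case~2 guarantees $\gamma\ge 4mn/9$, whose reciprocal $9/4$ sits just under the target approximation ratio $3$; a larger constant wastes slack in Case~1, a smaller one breaks Case~2. The remaining bookkeeping — handling integer rounding in the averaging step, and checking that corners of any nontrivial $\cZ\subseteq R_{n,m}$ satisfy $a\le n$ and $b\le m$ so that Proposition~\ref{prop-L-shape} applies — is routine.
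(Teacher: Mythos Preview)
Your argument is correct and considerably more elementary than the paper's. The paper introduces an auxiliary ``enhanced'' domination number $\gammap$ (Section~\ref{sec-3-approx}), proves $\gamma\le\gammap\le 3\gamma$ by converting an optimal dominating set into enhancement vectors and back (Lemmas~\ref{lemma-reg-enh}, \ref{lemma-enh-reg}, \ref{lem:approx-gamma}), and then computes $\gammap$ \emph{exactly} via a dynamic program over the concave corners in $\cO(m^2n^3p^2)$ time (Theorem~\ref{thm-alg-gammap}). Your route bypasses all of this: you output $\widehat U=\min_{(a,b)}\max(am,bn)$, get the upper bound from the monotonicity $\cZ\subseteq V_{a,b}$ plus Proposition~\ref{prop-L-shape}, and get the lower bound by a Markov/pigeonhole dichotomy. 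This is both faster (linear in the number of corners) and conceptually cleaner for $L=1$. What the paper's machinery buys is generality: Lemma~\ref{lem:approx-gamma} is stated for arbitrary latency $L$ and the $\gammap$/$\gammad$ framework is reused to prove Theorem~\ref{thm-const-approx} for general $L$; your argument is specific to $L=1$.

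Two small slips worth fixing. First, you write that $\gamma$ is ``monotone non-increasing in the zero-set''; it is non-\emph{decreasing} (a larger $\cZ$ is harder to dominate), though your conclusion $\gamma(\cZ)\le\gamma(V_{a,b})$ from $\cZ\subseteq V_{a,b}$ is the right one. Second, your indexing of $\alpha_i,\beta_j$ versus $p,q$ is tangled: with $\alpha_i$ counting points with fixed first coordinate (a column count, $n$ values of $i$) and $\beta_j$ a row count ($m$ values of $j$), the domination condition for $v=(i,j)\notin A$ reads $(\beta_j,\alpha_i)\notin\cZ$, so it is $\beta_j$ that should be compared to $p=\lfloor 3\gamma/m\rfloor$ and $\alpha_i$ to $q=\lfloor 3\gamma/n\rfloor$. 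The Markov counts then give $|\{j:\beta_j\le p\}|>2m/3$ and $|\{i:\alpha_i\le q\}|>2n/3$, and the rest goes through as you wrote.
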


 For an arbitrary fixed finite latency number, we give a constant factor approximation algorithm. In its statement, we allow $\cZ$ 
 to be included in a rectangle much smaller than the universe 
 $R_{n,m}$. 

 	\begin{theorem}\label{thm-const-approx} For any fixed finite $L$,
    there exist a constant $C\ge 1$ and an algorithm  that takes as input $m,n\in \bN$ and a zero-set $\Z$ with $\Z\subseteq R_{a,b}\subseteq R_{n,m}$, returns a number in the interval $[\gamma^L, C\gamma^L]$, where $\gamma^L = \gamma^L(\Z,m,n)$, and runs in time that is polynomial in $ab$.  
\end{theorem}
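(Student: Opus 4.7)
The plan is to extend the structural approach behind Theorem~\ref{thm-3-approx} to arbitrary fixed $L$, leveraging the hypothesis $\cZ\subseteq R_{a,b}$ to compress the dynamics into a state space of size polynomial in $ab$.

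First I would make the key reduction. Since any row that accumulates $\ge a$ occupied points automatically fills up at the next step (because $\cZ\subseteq R_{a,b}$), and similarly any column with $\ge b$ occupied points fills up, the evolution of each row is fully captured by its \emph{type trajectory} $\tau=(\tau_0,\ldots,\tau_L)$ where $\tau_t=\min(r_i(t),a)$; column trajectories are analogously capped at $b$. The number of non-decreasing trajectories is at most $\binom{a+L}{L}$ (resp.\ $\binom{b+L}{L}$), hence polynomial in $ab$ for fixed $L$. Under the dynamics, a row's trajectory is determined by the initial count and the multiset of column trajectories (and vice versa), so the entire evolution is encoded by the joint multiplicities of the chosen trajectory types.

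Next I would define a class of \emph{canonical} initial sets in which at most $O(1)$ distinct trajectory types appear among the rows and columns, with prescribed multiplicities in $\{0,1,\ldots,m\}$ and $\{0,1,\ldots,n\}$. For each canonical structure, the minimum-size initial set compatible with it is a closed-form arithmetic expression in $m$, $n$, and the structure parameters, obtained by summing over (row type, column type) pairs the initial counts required to populate each combination. The algorithm enumerates all canonical structures (polynomially many in $ab$) and returns the minimum; each per-structure computation takes polynomial time in $ab$.

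The crux is a \emph{rounding lemma}: for every latency-$L$ $\cZ$-dominating set $A^\star$, there is a canonical $A'$ with $|A'|\le C(L)\,|A^\star|$ that is also latency-$L$ $\cZ$-dominating. I would prove it by induction on $L$, with the base case $L=1$ provided by (the proof of) Theorem~\ref{thm-3-approx}. For the inductive step, group the rows and columns of $A^\star$ by their trajectory type under $\cT$, aggregate each group into $O(1)$ representative types by rounding counts within the group to a common value (at constant multiplicative cost), and apply the inductive hypothesis to $\cT(A^\star)$, which is a $\cZ$-dominating set of latency $L-1$. Monotonicity of $\cT$ and downward closedness of $\cZ$ ensure that the rounded initial configuration still dominates within $L$ steps.

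The main obstacle will be controlling the constant $C(L)$ across the induction, since rounding the initial counts of rows perturbs the column counts at subsequent steps, which in turn shifts later row counts; in principle this cross-propagation could compound geometrically. Showing that each inductive step multiplies $C$ by only an absolute constant — so that $C(L)=O(2^{L})$ suffices and remains a constant for fixed $L$ — is the critical technical estimate, and the one I would expect to consume most of the proof.
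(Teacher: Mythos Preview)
Your approach is genuinely different from the paper's, and the central step has a real gap.

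The paper does not work with trajectory types or a forward induction on $L$. Instead, it passes through two auxiliary enhanced domination numbers: first $\gammap^L$ (Lemma~\ref{lem:approx-gamma} already gives $\gamma^L$ within a factor $L$ of $\gammap^L$), and then a coarser quantity $\gammad^L$, the minimum $|\vec r|+|\vec c|$ over enhancements for which the empty set spans in $L$ steps. Lemma~\ref{lem:approx-gamma_k'-gamma_k''} shows $\gammap^L$ and $\gammad^L$ are within a constant factor, by a case analysis on the shape of $\cZ$. The structural heart is Lemma~\ref{lem:poly-gamma''_k}: any \emph{minimal} enhancement pair for $\gammad^L$ has at most $2^L$ distinct values. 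This is proved by a \emph{backward} recursion, building a chain of ``trigger'' Young diagrams $\cY_L\supseteq\cdots\supseteq\cY_0=\emptyset$ where each $\cY_{i-1}$ is assembled from minimal witnesses for the convex corners of $\cY_i$; the number of corners at most doubles at each step. Once the number of distinct values is bounded, one enumerates all candidate enhancements in time polynomial in $ab$.

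Your inductive rounding lemma, by contrast, is the entire argument, and the inductive step as you describe it does not close. You propose to ``apply the inductive hypothesis to $\cT(A^\star)$, which is a $\cZ$-dominating set of latency $L-1$''. But the inductive hypothesis yields a canonical $B'$ with $|B'|\le C(L-1)\,|\cT(A^\star)|$, and $|\cT(A^\star)|$ is in general vastly larger than $|A^\star|$ (indeed, for $L=2$ one already has $|\cT(A^\star)|$ of order $mn$ while $|A^\star|\le ab$). Nothing in the sketch explains how to pull $B'$ back to an \emph{initial} set of size comparable to $|A^\star|$, nor why rounding $A^\star$ itself to $O(1)$ trajectory types costs only a constant factor: rounding row counts up perturbs all column counts simultaneously, and you give no mechanism to prevent this from cascading. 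The assertion that the minimum-size initial set compatible with a canonical structure is a ``closed-form arithmetic expression'' is also not justified --- prescribing row and column counts simultaneously is a bipartite degree-sequence constraint, not a sum over independent pairs. The paper's passage to enhancements sidesteps exactly these coupling issues, because enhancements act additively and independently on rows and columns, and the backward trigger recursion replaces your forward induction with something that tracks only corner positions rather than full count trajectories.
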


In fact, the algorithms given by Theorems~\ref{thm-3-approx} and~\ref{thm-const-approx} can be easily adapted so that they return a $\cZ$-dominating set $A$ with $|A|\le 3\gamma$ and a $\cZ$-dominating set $A$ with latency $L$ with $|A|\le C\gamma^L$, respectively.
These results place the optimization problems of computing the Young domination numbers $\gamma^L(\Z,m,n)$ in the complexity class \textsf{APX} (see, e.g.,~\cite{ACGKMP}).

Observe that $\gamma^{L+1}(\cZ,m,n)\le \gamma^{L}(\cZ,m,n)$ for all $L\ge 0$.
In light of this and Theorem~\ref{thm-const-approx}, it is natural to ask how much can $\gamma^L$ differ for different $L$. 
The next result addresses this to some extent, showing in particular that at least the first four are not within a constant factor of each other.

\begin{theorem}\label{thm-different-L} Assume that $\cZ\subseteq
R_{a,b}$ for some $a,b\ge 1$.
If $2\le L\le\infty$ and $m,n>ab$, then $\gamma^L(\cZ,m,n)= \gamma^L(\cZ,\infty,\infty)$. If, 
in addition, $L\ge 2ab+5$, then 
$$\gamma^L(\cZ,m,n)= \gamma^\infty(\cZ,\infty,\infty)\in[|\cZ|/4,|\cZ|].
$$
Moreover, 
\begin{equation}\label{eq-different-L}
    \sup_{\cZ,m,n}\frac{\gamma^{L}(\cZ,m,n)}{\gamma^{L+1}(\cZ,m,n)}=\infty,
\end{equation}
for $L=0,1,2$. 
\end{theorem}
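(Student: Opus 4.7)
For part (1), I begin by noting that the block $A_0=[0,b-1]\times[0,a-1]$ of size $ab$ is $\cZ$-dominating with latency $2$ in both $R_{n,m}$ (whenever $n\ge b$ and $m\ge a$) and in the infinite setting on $\bZ_+^2$: at step~$1$ the rows indexed by $[0,b-1]$ saturate (each point has row-count $=a$, and $(a,0)\notin R_{a,b}\supseteq\cZ$) and the columns indexed by $[0,a-1]$ saturate similarly; at step~$2$ every remaining point has row-count $a$ and column-count $b$, and since $(a,b)\notin \cZ$, it is occupied. Hence $\gamma^L(\cZ,m,n)\le ab$ and $\gamma^L(\cZ,\infty,\infty)\le ab$ for every $L\ge 2$. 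This forces optimal seed sets to lie in a box of side at most $ab$, and since $m,n>ab$ they embed cleanly into $R_{n,m}$. The remaining work is an inductive lemma: for any $A\subseteq R_{n,m}$ with $|A|\le ab$ and $m,n>ab$, the evolutions $\cT^t_{m,n}(A)$ and $\cT^t_{\infty}(A)$ determine the same collection of fully saturated rows and columns at every step $t$, and the same set of isolated occupied points. Consequently the conditions "$\cT^L(A)$ covers $R_{n,m}$" and "$\cT^L(A)$ covers $\bZ_+^2$" are equivalent, yielding equality of the latency-$L$ domination numbers.

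For part (2), a potential-function argument shows that from any $A$ with $|A|\le ab$, the infinite dynamics stabilizes within $2ab+5$ steps. Each step that introduces a newly fully saturated row or column advances a potential bounded by $2ab$, while only a bounded number of "isolated" additions can occur between such events (the range of possible isolated positions being limited by $|\cZ|\le ab$). Combined with part (1), this gives $\gamma^L(\cZ,m,n)=\gamma^\infty(\cZ,\infty,\infty)$ as soon as $L\ge 2ab+5$ and $m,n>ab$. The bracketing $\gamma^\infty(\cZ,\infty,\infty)\in[|\cZ|/4,|\cZ|]$ then comes from two ingredients presumably developed earlier in the paper: an upper bound via an explicit spanning construction of size at most $|\cZ|$, and a lower bound by a packing argument on the corners of the diagram.

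For part (3), three constructions are required. The case $L=0$ is immediate: $\gamma^0=mn$, while any nontrivial $\cZ$ yields a much smaller $\gamma^1$; for example $\cZ=\{(0,0)\}$ gives $\gamma^1=\min(m,n)$ by a hitting-set argument, so the ratio is at least $\max(m,n)$. For $L=1$, continue with $\cZ=\{(0,0)\}$: as above, $\gamma^1(\cZ,m,n)=\min(m,n)$, while a single seed $v$ saturates its row and column at step~$1$, and at step~$2$ every other point has row-count $\ge1$ and column-count $\ge1$, so $\gamma^2(\cZ,m,n)=1$, giving an unbounded ratio as $\min(m,n)\to\infty$. The case $L=2$ is the main technical obstacle: I would construct a family of zero-sets $\cZ_k$ together with suitable $m_k,n_k$ exhibiting a genuine three-stage amplification---a small seed set saturates a thin scaffold at step~$1$, the scaffold triggers further saturations at step~$2$, and only at step~$3$ is the entire rectangle dominated---while ruling out two-step shortcuts via a combinatorial argument showing that no seed of subcritical size can produce enough saturated rows and columns within only two steps to cover $R_{n_k,m_k}$. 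Proving the lower bound $\gamma^2\to\infty$ on this family is the crux and the hardest part of the theorem.
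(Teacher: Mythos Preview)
Your treatment of the first statement and of the cases $L=0,1$ in~\eqref{eq-different-L} is essentially correct and in line with the paper. For the first statement, the paper's Lemma~\ref{lemma-large-R} argues exactly as you do: $R_{a,b}$ spans in two steps, so any optimal seed has size at most $ab$ and can be assumed to lie in $[0,ab-1]^2$; the subsequent dynamics outside that box proceeds only by full-row and full-column saturations and is therefore independent of $m,n$. For $L=0,1$ the paper uses $\cZ=T_a$ rather than $\cZ=\{(0,0)\}$, but your examples work equally well, and the paper itself notes that any fixed $\cZ$ with $n\to\infty$ suffices.

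For the second statement, one clarification: the stabilisation bound $L\ge 2ab+5$ and the bracket $\gamma^\infty\in[|\cZ|/4,|\cZ|]$ are not developed earlier in this paper but are quoted from the companion papers \cite{GPS} and \cite{GSS}. Your potential-function sketch gestures in the right direction but is not a proof; the actual argument in \cite{GPS} is more delicate.

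The genuine gap is the case $L=2$ of~\eqref{eq-different-L}. You correctly identify it as the crux, but you offer neither a concrete family $\cZ_k$ nor a mechanism for the lower bound on $\gamma^2$, and your description of a ``three-stage amplification'' does not suggest how to rule out two-step shortcuts. The paper's route is much more direct. First, Lemma~\ref{lemma-gamma-2} gives an exact formula: for $m,n>ab$,
\[
\gamma^2(\cZ,m,n)=\min_{(x,y)\text{ concave corner of }\cZ}(bx+ay-xy),
\]
where $a,b$ are the lengths of the longest row and column of $\cZ$. The lower bound in this lemma comes from analysing the last row and last column (which are initially empty): the corner $(n-1,m-1)$ must be fed at time~$2$ by $x$ saturated columns and $y$ saturated rows for some concave corner $(x,y)$, and each such row or column requires $a$ or $b$ seeds respectively. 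Second, the paper exhibits
\[
\cZ=[0,a-1]^2\cup\bigl(\{0\}\times[0,b-1]\bigr)\cup\bigl([0,b-1]\times\{0\}\bigr)
\]
with $1\ll a\ll b\ll\sqrt{n}$. The concave corners of this $\cZ$ are $(0,b)$, $(1,a)$, $(a,1)$, $(b,0)$, and evaluating $bx+ay-xy$ at each gives $\gamma^2\ge ab$. On the other hand, taking $\cZ$ itself as the initial set one checks directly that it spans in three steps, so $\gamma^3\le|\cZ|<a^2+2b\ll ab$. This is the missing idea: the exact formula for $\gamma^2$ converts the lower-bound problem into a finite minimisation over concave corners, which an appropriately shaped $\cZ$ then makes large.
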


\subsection{Related work}

The dynamics with $\cZ=T_a$ given by (\ref{eq-bootstrap}) is also known as 
\emph{bootstrap percolation} \cite{GHPS}, while a rectangular zero-set $\cZ=R_{a,b}$ induces \emph{line growth} \cite{GSS}, also known as \emph{line percolation} \cite{BBLN}. 
In these and related papers on 
growth processes on graphs, points in the set $\cT^t(A)$ are called 
{\it occupied\/} (coded as 1s) and points outside of it {\it empty\/}
(coded as 0s); a set $A$ which makes every point eventually 
occupied is precisely a Young dominating set with $L=\infty$ 
and is also called
a \emph{spanning} \cite{GSS} or \emph{percolating} \cite{BBLN} set. 
For $m=n$, $K_n\Box K_n$ is often referred to as the 
\emph{rook's graph}. 
The $a$-domination number for this case was considered in \cite{BLL} (see also~\cite[Section 8]{HH}), but domination and its variants on \emph{Hamming graphs}, that is, Cartesian products of complete graphs, remain relatively unexplored.   

Young domination is one way to extend $a$-domination 
on graphs \cite{HV}. Another is vector domination (see, e.g.,~\cite{CMV,HPV}), where every vertex has its own requirement for the number of neighbors in the dominating set. 
In a further generalization in the same fashion as in the present paper, 
Cicalese et al.~\cite{CCGMV} considered vector domination with latency.
In social network theory, designing small initial sets that yield large 
final sets due to vector domination with infinite latency is referred 
to as \emph{target set selection} (see~\cite{Chen}).

Domination with infinite latency, that is, spanning or percolation, with random initial sets has played a crucial role in spatial growth processes, starting with the foundational paper \cite{AL}. 
By far the most common setting, with many deep and surprising results (see, e.g.,~the survey \cite{Mor}), is the Cartesian product of $d$ path graphs on $n$ vertices, and thus with standard nearest neighbor lattice connectivity. 
Associated extremal quantities have also received attention, most of all the minimal size of a spanning set, that is, $\gamma^\infty$ in our notation.  
For this graph, the \hbox{$a$-domination} number with infinite latency equals $n^{d-1}$ for $a=d$ \cite{PS} and $\lceil d(n-1)/2\rceil + 1$ for $a=2$ \cite{BBM}.  For 
other values of $a$, a sharp asymptotic formula for the hypercube ($n=2$) 
as $d\to\infty$ was given in \cite{MN}, and an exact formula for $d=a=3$ in \cite{DNR}. 
Smallest spanning sets have also been studied for trees \cite{DR,Rie, BKN}, and strong products \cite{BH} and direct products \cite{BHH} of graphs. 
Among many papers that study graph bootstrap percolation, 
a dynamics akin to ours 
when viewed as an edge-addition rule in bipartite graphs (see~Section~\ref{sec-extremal}), we mention \cite{HHQ} as it also discusses
a method suited for handling product graphs.
For Hamming
graphs, \cite{BBLN} and \cite{BMT} study $\gamma^\infty$
for line growth and bootstrap percolation, while
\cite{GSS} gives bounds on $\gamma^\infty$ 
for general zero-sets. In this context, sets with 
a small latency number $L$ represent fast spanning. 

\subsection{Overview of our methodology and structure of the paper}

 For our exact results (Theorem~\ref{thm-line} and~\ref{thm-boot}), we use two methods to prove lower bounds in different regimes: one is standard bookkeeping, while the other is a novel approach using an algebraic formulation and appropriate inequalities. The upper bounds result from explicit constructions, which, as (\ref{eq-boot-odd-a>n-simpl}) suggests, may be somewhat involved. As already mentioned, 
Theorem~\ref{thm-boot} completely resolves the $a$-domination 
for rook graphs, significantly extending the results from \cite{BLL}. The first approximation result, Theorem~\ref{thm-3-approx}, is proved by comparison with a simpler growth dynamics, which for latency $1$ yields 
an integer optimization problem, which in turn can be solved by dynamic programming. 
The second approximation result, Theorem~\ref{thm-const-approx}, works for general finite latency, and is proved by appropriately restricting 
the number of choices of the initial configuration for the simplified dynamics (Lemma~\ref{lem:poly-gamma''_k}). 
Theorem~\ref{thm-different-L} mostly follows from results from \cite{GSS, GPS}, but we do add an exact result for $L=2$ (Lemma~\ref{lemma-gamma-2}).

We prove Proposition~\ref{prop-L-shape} and Theorem~\ref{thm-line} in Section~\ref{sec-line}, Theorem~\ref{thm-boot} in Section~\ref{sec-boot}, Theorem~\ref{thm-3-approx} in Section~\ref{sec-3-approx}, 
Theorem~\ref{thm-const-approx} in Section~\ref{sec-C-approx}, and Theorem~\ref{thm-different-L} in Section~\ref{sec-comp}.
In Section~\ref{sec-prelim} we introduce some 
tools and prove a few preliminary lemmas. Section~\ref{sec-extremal}
establishes a relationship between our framework and extremal 
problems in bipartite graphs. This connection is based on duality 
between Young diagrams and provides further motivation for 
study of Young domination.
\section{Preliminaries}\label{sec-prelim}

Let $m,n\in \mathbb{N} = \{1,2,3,\ldots\}$.
The vertex set of our graph $K_m\Box K_n$ is $R_{n,m} = [0,n-1]\times [0, m-1]$. 
Also, we shorten $R_n=R_{n,n}$. Thus, two vertices $(j,i)$ and $(\ell,k)$ are adjacent if and only if they differ in precisely one coordinate.
For each $i\in [0,m-1]$ the \emph{$i$-th row} of $R_{n,m}$ is the set of all vertices of $R_{n,m}$ with second coordinate $i$; similarly, for each $j\in [0, n-1]$ the \emph{$j$-th column} is the set of all vertices of $R_{n,m}$ with first coordinate $j$.
Given a vertex $(j,i)\in R_{n,m}$, its row neighborhood $\cN^r(j,i)$ then consists of all vertices distinct from $(j,i)$ with second coordinate $i$,
and its column neighborhood $\cN^c(j,i)$ of all vertices distinct from $(j,i)$
with first coordinate $j$. 
Note that, in this setting, the first factor $K_m$, whose vertices are represented by the interval $[0,m-1]$, is vertical and the second factor $K_n$, represented by the interval $[0,n-1]$, is horizontal; also, each row neighborhood has $n-1$ vertices and each column neighborhood $m-1$ vertices. 
We will also encounter infinite $m$ and $n$: $R_{n,\infty}=[0, n-1]\times [0,\infty)$, and analogously, $R_{\infty,m}$ and $R_{\infty,\infty}$.
Recall that every zero-set $\cZ$ defines the corresponding transformation $\cT = \cT(\cZ)$ on subsets of $R_{n,m}$; note that we may always replace $\cZ$ by $\cZ\cap R_{n,m}$ without affecting the dynamics.

For a Young diagram $\cY$, we call a point
 $(z_1,z_2)\in\bZ_+^2$  its
 \begin{itemize}
 \item {\it convex corner\/} if $(z_1,z_2)\in \cY$
  and $(z_1+1,z_2), (z_1,z_2+1)\notin \cY$;
  \item {\it concave corner\/} if $(z_1,z_2)\notin \cY$
  and $(z_1-1,z_2), (z_1,z_2-1)\in \cY\cup
  (\bZ^2\setminus\bZ^2_+)$.
 \end{itemize}
 We denote by $\cI_\cY$ the set of concave corners of a Young diagram $\cY$.
 Note that elements of $\cI_\cY$ are, under the natural partial order on $\bZ_+^2$, the minimal points of $\bZ_+^2\setminus \cZ$ (i.e., the minimal neighborhood counts that induce occupation).
 Similarly, the convex corners are maximal points
 of $\cZ$.
The set $\cI_\cY$ is minimal in the following sense:
$\cY=\bigcup_{(a-1,b-1)\in \cI_\cY} R_{a,b}$, and
$\cY=\bigcup_{(a-1,b-1)\in \cI} R_{a,b}$
implies $\cI_\cY\subseteq \cI$.
Figure~\ref{fig:intro} provides an example of a $\cZ$-dominating 
set on $K_4\Box K_5$. (In our figures, we often represent a point $z\in\bZ_+^2$ as a unit square with $z$ in its center.)

\begin{figure}[ht!]
\begin{center}
\definecolor{dgrey}{rgb}{0.5, 0.5, 0.5}
\begin{tikzpicture}
\def\u{0.6}
\def\n{5}
\def\m{4}

  \fill [dgrey] 
      (0,0) rectangle   ({1*\u},{3*\u});
      
      \fill [dgrey] 
      (0,0) rectangle   ({3*\u},{2*\u});
      \fill [dgrey] 
      (0,0) rectangle   ({4*\u},{1*\u});
\foreach \i in {0,...,\n}
    {\draw [black, line width=0.1] (\u*\i,0)--(\u*\i,{\u*\m});}
 \foreach \i in {0,...,\m}   
   { \draw [black, line width=0.1] (0,\u*\i)--({\u*\n}, \u*\i);}
   
   \pgfmathsetmacro\mm{{\m-1}}
   \pgfmathsetmacro\nm{{\n-1}}
 \foreach \i in {0,...,\nm}
   {
       \node[] at ({\u*(\i+0.5)},{-0.5*\u}) {$\i$};
   }
   \foreach \i in {0,...,\mm}
   {
       \node[] at ({-0.5*\u},{\u*(\i+0.5)}) {$\i$};
    }
    
 \node[] at ({0.5*\u}, {3.5*\u}) {$\square$};
\node[] at ({1.5*\u}, {2.5*\u}) {$\square$};
\node[] at ({3.5*\u}, {1.5*\u}) {$\square$};
\node[] at ({4.5*\u}, {0.5*\u}) {$\square$};

\begin{scope}[shift={(5,0)}]
\def\n{4}
\def\m{3}
\def\u{1}
\foreach \i in {0,...,\n}
    {\draw [black, line width=0.1] (\u*\i,0)--(\u*\i,{\u*\m});}
 \foreach \i in {0,...,\m}   
   { \draw [black, line width=0.1] (0,\u*\i)--({\u*\n}, \u*\i);}
\fill [black] (0,0) circle (3pt);
  \fill [black] (\u,0) circle (3pt);
   \fill [black] (4*\u,0) circle (3pt);
 \fill [black] (0,\u) circle (3pt);
  \fill [black] (\u,\u) circle (3pt);
   \fill [black] (2*\u,\u) circle (3pt);
   \fill [black] (3*\u,2*\u) circle (3pt);
   \fill [black] (\u,3*\u) circle (3pt);
  \fill [black] (2*\u,3*\u) circle (3pt);
   \fill [black] (4*\u,3*\u) circle (3pt);
  
  \node [left] at (-0.1*\u,0) {$3$};
  \node [left] at (-0.1*\u,\u) {$3$};
  \node [left] at (-0.1*\u,2*\u) {$1$};
  \node [left] at (-0.1*\u,3*\u) {$3$};
   \node [below] at (0,-0.1*\u) {$2$};
   \node [below] at (\u,-0.1*\u) {$3$};
   \node [below] at (2*\u,-0.1*\u) {$2$};
   \node [below] at (3*\u,-0.1*\u) {$1$};
   \node [below] at (4*\u,-0.1*\u) {$2$};
   
\end{scope}
 
\end{tikzpicture}
\end{center}
\caption{Left: a zero-set $\cZ$; the points of $\cZ$ are the centers of the dark squares. Its 
concave corners (see Section~\ref{sec-prelim}) are $(0,3)$, $(1,2)$, $(3,1)$, and $(4,0)$ and are labeled by $\square$. Right: 
a $\cZ$-dominating set $D$ (this time, with its members indicated by $\bullet$) on the graph $K_4\Box K_5$ with vertex set $R_{5,4}=[0,4]\times [0,3]$, of (minimal) size 10.  
Note that the vertices of the product graph are represented so that the first coordinate is vertical and the second coordinate is horizontal, in line with the definitions of rows and columns, and row and column neighborhoods of a vertex.
Accordingly, the row counts and column counts are given, respectively, along the vertical and horizontal axis. 
For every point in $z\in R_{5,4}\setminus D$ 
the point $(r,c)$ with coordinates given by its row and column counts lies outsize $\cZ$, therefore 
$(r,c)$ is greater than or equal to a concave corner of $\cZ$ in the partial order of~$\bZ_+^2$. 
}
\label{fig:intro}
\end{figure}
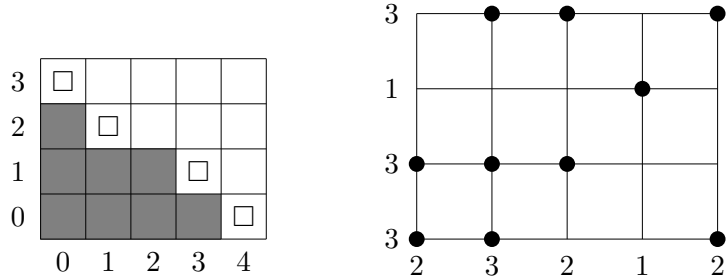

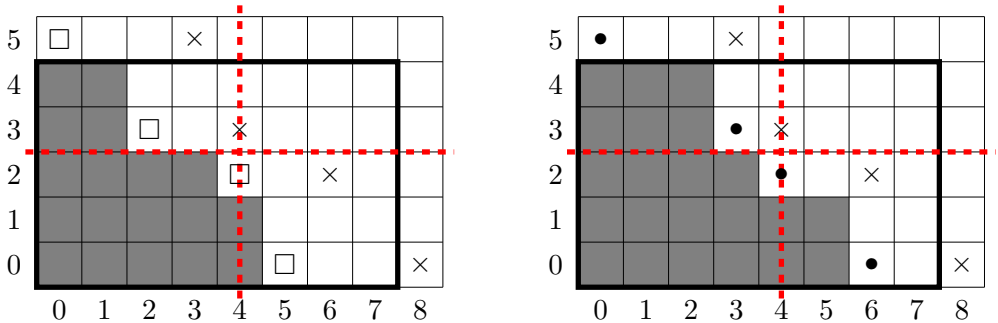
\begin{figure}[ht!]
\begin{center}
\definecolor{dgrey}{rgb}{0.5, 0.5, 0.5}
\begin{tikzpicture}
\def\u{0.6}
\def\n{9}
\def\m{6}

  \fill [dgrey] 
      (0,0) rectangle   ({2*\u},{5*\u});
 \fill [dgrey] 
      (0,0) rectangle   ({5*\u},{3*\u});
 \fill [white] 
      ({4*\u},{2*\u}) rectangle   ({5*\u},{(3*\u});
\foreach \i in {0,...,\n}
    {\draw [black, line width=0.1] (\u*\i,0)--(\u*\i,{\u*\m});}
 \foreach \i in {0,...,\m}   
   { \draw [black, line width=0.1] (0,\u*\i)--({\u*\n}, \u*\i);}
   
   \pgfmathsetmacro\mm{{\m-1}}
   \pgfmathsetmacro\nm{{\n-1}}
 \foreach \i in {0,...,\nm}
   {
       \node[] at ({\u*(\i+0.5)},{-0.5*\u}) {$\i$};
   }
   \foreach \i in {0,...,\mm}
   {
       \node[] at ({-0.5*\u},{\u*(\i+0.5)}) {$\i$};
    }

\draw [black, line width=2] 
      (0,0) rectangle   ({(\n-1)*\u},{(\m-1)*\u});
      
\draw [red, dashed,line width=2] 
      ({-0.25*\u},{3*\u}) --  ({9.25*\u},{3*\u});
\draw [red, dashed,line width=2] 
      ({4.5*\u},{-0.25*\u}) --  ({4.5*\u},{6.25*\u});
             
\node[] at ({2.5*\u}, {3.5*\u}) {$\square$};
\node[] at ({0.5*\u}, {5.5*\u}) {$\square$};
\node[] at ({5.5*\u}, {0.5*\u}) {$\square$};
\node[] at ({4.5*\u}, {2.5*\u}) {$\square$};
\node[] at ({8.5*\u}, {0.5*\u}) {$\times$};
\node[] at ({6.5*\u}, {2.5*\u}) {$\times$};
\node[] at ({3.5*\u}, {5.5*\u}) {$\times$};
\node[] at ({4.5*\u}, {3.5*\u}) {$\times$};

 \fill [dgrey] 
      (12*\u,0) rectangle   ({12*\u+3*\u},{(5*\u});     
 \fill [dgrey]
      (12*\u,0) rectangle   ({12*\u+6*\u},{(2*\u});
\fill [dgrey]
      (12*\u,0) rectangle   ({12*\u+4*\u},{(3*\u});

\foreach \i in {0,...,\n}
    {\draw [black, line width=0.1] (12*\u+\u*\i,0)--(12*\u+\u*\i,{\u*\m});}
 \foreach \i in {0,...,\m}   
   { \draw [black, line width=0.1] (12*\u,\u*\i)--({12*\u+\u*\n}, \u*\i);}
   
   \pgfmathsetmacro\mm{{\m-1}}
   \pgfmathsetmacro\nm{{\n-1}}
 \foreach \i in {0,...,\nm}
   {
       \node[] at ({12*\u+\u*(\i+0.5)},{-0.5*\u}) {$\i$};
   }
   \foreach \i in {0,...,\mm}
   {
       \node[] at ({12*\u-0.5*\u},{\u*(\i+0.5)}) {$\i$};
    }

\draw [black, line width=2] 
      (12*\u,0) rectangle   ({12*\u+(\n-1)*\u},{(\m-1)*\u});
      
\draw [red, dashed,line width=2] 
      ({12*\u-0.25*\u},{3*\u}) --  ({12*\u+9.25*\u},{3*\u});
\draw [red, dashed,line width=2] 
      ({12*\u+4.5*\u},{-0.25*\u}) --  ({12*\u+4.5*\u},{6.25*\u});

\node[] at ({12*\u+0.5*\u}, {5.5*\u}) {$\bullet$};
\node[] at ({12*\u+3.5*\u}, {3.5*\u}) {$\bullet$};
\node[] at ({12*\u+4.5*\u}, {2.5*\u}) {$\bullet$};
\node[] at ({12*\u+6.5*\u}, {0.5*\u}) {$\bullet$};
\node[] at ({12*\u+8.5*\u}, {0.5*\u}) {$\times$};
\node[] at ({12*\u+6.5*\u}, {2.5*\u}) {$\times$};
\node[] at ({12*\u+3.5*\u}, {5.5*\u}) {$\times$};
\node[] at ({12*\u+4.5*\u}, {3.5*\u}) {$\times$};

\end{tikzpicture}
\end{center}
\caption{A Young diagram $\cY$ (dark gray squares) within $R_{8,5}$ (outlined) is given on the left. Its four
concave corners, labeled by $\square$, are reflected through both of the bisecting lines, giving the 
four sites labeled by $\times$. These are reproduced on the right, and determine $\widecheck \cY$,
whose  four concave corners are labeled by $\bullet$. Observe that the double reflection of the 
four $\bullet$'s gives the three points diagonally adjacent to the convex corners of $\cY$, as 
required, because the dual map is an involution. Observe also that
$\widecheck \cY$ equals $R_{8,5}\setminus \cY$, 
reflected through the centroid of $R_{8,5}$, as 
defined in \cite{Roh}.
}
\label{fig:dual}
\end{figure}

We define the following operation on Young diagrams.

\begin{definition}\label{def:dual}
Fix $m,n\in \mathbb{N}$.
    For a Young diagram $\cY\subseteq R_{n,m}$, we define the \emph{dual} of $\cY$ as the unique Young diagram $\widecheck{\cY} \subseteq R_{n,m}$ such that the set of convex corners of $\widecheck{\cY} $ is given by \[\{(n-a-1,m-b-1)\colon (a,b)\in \mathcal{I}_\cY, 0\le a\le n-1, 0\le b\le m-1\}\,.\]
\end{definition}

For a set $A\subseteq R_{n,m}$, let $\rho(A)\subseteq R_{n,m}$ be the reflection of $A$ through the centroid $$((n-1)/2, (m-1)/2)\in\bR_+^2.$$ 
The next lemma, which follows easily from the definition, observes that the dual operation coincides with the one given in, e.g.,~\cite{Roh}. 
See Fig.~\ref{fig:dual} for an example. 

\begin{lemma} For a given $m,n$, and a Young diagram $\cY\subseteq R_{n,m}$, $\widecheck \cY=\rho(R_{n,m}\setminus \cY)$. Therefore, the transformation $\cY\mapsto\widecheck\cY$ is an involution, i.e., it is its own inverse. 
\end{lemma}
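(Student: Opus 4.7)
The plan is to prove the set-theoretic identity $\widecheck \cY = \rho(R_{n,m}\setminus \cY)$ directly by matching convex corners, and then to deduce the involution property by a short algebraic manipulation. The structural fact I would rely on throughout is that $\rho$ is an involutive bijection of $R_{n,m}$ that reverses the coordinate-wise partial order, and in particular satisfies $\rho(R_{n,m})=R_{n,m}$.

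First I would verify that $S := \rho(R_{n,m}\setminus \cY)$ is a Young diagram contained in $R_{n,m}$. Since $R_{n,m}\setminus\cY$ is upward-closed inside $R_{n,m}$ (being the complement of a downward-closed set there), and $\rho$ reverses the order while fixing $R_{n,m}$ setwise, $S$ is downward-closed in $R_{n,m}$, hence a Young diagram.

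Next I would identify the convex corners of $S$, i.e.\ its maximal points in $\bZ_+^2$. Because $\rho$ is an order-reversing bijection of $R_{n,m}$, these are precisely the $\rho$-images of the minimal elements of $R_{n,m}\setminus \cY$. A short case check shows that these minimal elements are exactly the concave corners of $\cY$ that lie in $R_{n,m}$: the two boundary cases $z_1=0$ and $z_2=0$ in the minimality condition inside $R_{n,m}$ correspond exactly to the clause $(z_1-1,z_2),(z_1,z_2-1)\in \cY\cup(\bZ^2\setminus\bZ_+^2)$ in the definition of concave corner, while the interior cases impose membership in $\cY$. Since $\rho(a,b)=(n-1-a,m-1-b)$, the convex corners of $S$ therefore coincide with the set displayed in Definition~\ref{def:dual}, so the uniqueness part of that definition gives $S=\widecheck{\cY}$.

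For the involution property, the key auxiliary identity is $\rho(R_{n,m}\setminus X)=R_{n,m}\setminus \rho(X)$ for any $X\subseteq R_{n,m}$, which follows from the bijectivity of $\rho$ together with $\rho(R_{n,m})=R_{n,m}$. Applying the formula $\widecheck{(\cdot)}=\rho(R_{n,m}\setminus \cdot)$ twice and using $\rho\circ\rho=\mathrm{id}$ then yields
\[
\widecheck{\widecheck{\cY}}
= \rho\bigl(R_{n,m}\setminus \widecheck{\cY}\bigr)
= \rho\bigl(\rho(R_{n,m}\setminus(R_{n,m}\setminus\cY))\bigr)
= \rho(\rho(\cY))
= \cY.
\]
The only delicate point in the whole argument is the boundary case in the correspondence between concave corners of $\cY$ (which live in $\bZ_+^2$ and may sit outside $R_{n,m}$, e.g.\ $(n,0)$ when $\cY$ touches the right edge) and minimal elements of $R_{n,m}\setminus \cY$ (which by definition must lie inside $R_{n,m}$); once the boundary clause in the definition of concave corner is unpacked, everything else is formal.
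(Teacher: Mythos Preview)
Your proof is correct. The paper does not actually supply a proof of this lemma; it only remarks that it ``follows easily from the definition'' and refers to Figure~\ref{fig:dual}. Your argument fills in precisely the details one would expect: the order-reversal of $\rho$ on $R_{n,m}$ converts the upward-closed complement $R_{n,m}\setminus\cY$ into a Young diagram, and the identification of convex corners of $\rho(R_{n,m}\setminus\cY)$ with the $\rho$-images of the concave corners of $\cY$ lying inside $R_{n,m}$ matches Definition~\ref{def:dual} exactly, including the range restriction $0\le a\le n-1$, $0\le b\le m-1$. The involution step is then the standard formal computation you give.
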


We routinely use the following lemma, whose simple proof we omit.

\begin{lemma}\label{lemma-swap}
Let $\cS$ be an operation that, given a subset $A\subseteq R_{n,m}$, returns the subset of $R_{n,m}$ obtained from $A$ by permuting rows $k$ and $k'$, for some distinct $k,k' \in \{1,\dots, m\}$.
Then, for any transformation $\cT$ given by some zero-set, $\cT(\cS(A)) = \cS(\cT(A))$. This also holds for operations permuting columns.
\end{lemma}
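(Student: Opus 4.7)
The plan is to prove the statement for a row swap (the column case being symmetric) by a direct bookkeeping argument, leveraging the fact that the transformation $\cT(\cZ)$ is determined solely by the pair of row and column counts at each vertex. Let $\sigma$ denote the transposition of $k$ and $k'$, so that $\cS(A) = \{(j,\sigma(i)) : (j,i)\in A\}$. I would first observe that for any subset $B\subseteq R_{n,m}$ and any vertex $(j,i)$, the row count $|\cN^r(j,i)\cap B|$ depends only on which elements of $B$ lie in row $i$, and similarly for the column count. This is just the definition.

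Next I would verify the key identity: for every $(j,i)\in R_{n,m}$,
\begin{equation*}
\bigl(|\cN^r(j,i)\cap \cS(A)|,\,|\cN^c(j,i)\cap \cS(A)|\bigr) \;=\; \bigl(|\cN^r(j,\sigma(i))\cap A|,\,|\cN^c(j,\sigma(i))\cap A|\bigr).
\end{equation*}
For the row coordinate this holds because row $i$ of $\cS(A)$ is precisely the set of points $(j',i)$ with $(j',\sigma(i))\in A$, which is in bijection with row $\sigma(i)$ of $A$ via the first coordinate, and this bijection also matches $(j,i)$ with $(j,\sigma(i))$. For the column coordinate, column $j$ of $\cS(A)$ is the image under $\sigma$ (in the second coordinate) of column $j$ of $A$; since $\sigma$ is a bijection on $[0,m-1]$ sending $i$ to $\sigma(i)$, the number of column neighbors of $(j,i)$ in $\cS(A)$ equals the number of column neighbors of $(j,\sigma(i))$ in $A$.

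Finally, I would combine this with the definition of $\cT$. A vertex $(j,i)$ belongs to $\cT(\cS(A))$ if and only if either $(j,i)\in \cS(A)$ or the pair above lies outside $\cZ$; by the displayed identity and the fact that $(j,i)\in\cS(A) \Leftrightarrow (j,\sigma(i))\in A$, this happens if and only if $(j,\sigma(i))\in \cT(A)$, which in turn is equivalent to $(j,i)\in\cS(\cT(A))$. The column case is literally the same with rows and columns exchanged. The only subtlety to watch is correctly pairing $(j,i)$ in $\cS(A)$ with $(j,\sigma(i))$ in $A$ (rather than the reverse) when tracking counts, but once this bookkeeping is set up, nothing is difficult and no additional tool is needed.
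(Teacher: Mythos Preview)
Your argument is correct and is exactly the natural verification one would write; the paper itself omits the proof, calling it simple, so there is no alternative approach to compare against.
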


We now review a variant of the transformation $\cT$, which adds fixed amounts to each row and column count \cite{GSS, GPS}. The 
advantage is that one can get a nontrivial growth dynamics even if the initial set is $A=\emptyset$, in which case the behavior of iterates is significantly more regular. 
Our transformation $\cT$ is given additional two parameters $\vec r=(r_0, \ldots, r_{m-1})\in \bZ_+^m$
and $\vec c=(c_0, \ldots, c_{n-1})\in \bZ_+^n$, which are sequences of nonnegative integers called \emph{enhancements}. Then 
the \emph{enhanced neighborhood growth} is given by the triple $(\Z,\vec r, \vec c)$  and is defined as follows:
\[
\cT(A)=A\cup\{(j,i)\in R_{n,m}\colon 
    (|\cN^r(j,i)\cap A|+r_i,|\cN^c(j,i)\cap A|+c_j)\notin \Z\}.
\]
We do not introduce a new notation as the usual neighborhood growth given by $\Z$ is the same as its enhancement given by $(\Z,\vec 0,\vec 0)$, and we do not distinguish between the two.
We denote by $|\cdot|$ the $\ell^1$-norm, thus
$|\vec r|=r_0+\cdots+r_{m-1}$ and $|\vec c|=c_0+\cdots+c_{n-1}$.

The next lemma, whose proof is a simple 
verification (see \cite[Lemma 2.3]{GPS}), gives the key regularity property of the enhanced rule.
 
\begin{lemma}\label{lem-zeroyoung}
  Let $m,n \in \bN$  and let $\cT$ be the dynamics defined by the triple $(\Z, \vec r, \vec c)$ consisting of a zero-set $\Z$ and nonincreasing enhancements $\vec r$ and $\vec c$.
  If $A \subseteq R_{n,m}$ is a Young diagram, then so is $\cT(A)$.
  Consequently, $\cT^k(\emptyset)$ is a Young diagram for every $k \in \bZ_+$.
\end{lemma}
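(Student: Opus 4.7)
The plan is to verify directly that $\cT(A)$ is downward closed, and then derive the statement about $\cT^k(\emptyset)$ by induction on $k$ (the base case $\emptyset$ is trivially a Young diagram). So fix $(j,i) \in \cT(A)$ and $(j',i') \in R_{n,m}$ with $j' \le j$ and $i' \le i$; I want to show $(j',i') \in \cT(A)$. If $(j,i) \in A$, then downward-closedness of $A$ places $(j',i')$ in $A \subseteq \cT(A)$ and we are done; similarly if $(j',i') \in A$. The only nontrivial case is therefore $(j,i), (j',i') \notin A$, and in this case $(j,i) \in \cT(A)$ forces the enhanced count pair
\begin{equation*}
    \bigl(|\cN^r(j,i) \cap A| + r_i,\; |\cN^c(j,i) \cap A| + c_j\bigr)
\end{equation*}
to lie outside $\cZ$.

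The core of the argument is then to show that the corresponding enhanced count pair at $(j',i')$ dominates this one coordinatewise. For the row coordinate, note that because $(j,i) \notin A$, the quantity $|\cN^r(j,i) \cap A|$ equals the total population of $A$ in row $i$, and likewise $|\cN^r(j',i') \cap A|$ equals the total population of $A$ in row $i'$; since $A$ is a Young diagram and $i' \le i$, the latter is at least the former. The argument for columns is identical. Adding the inequalities $r_{i'} \ge r_i$ and $c_{j'} \ge c_j$ supplied by the monotonicity of the enhancements yields the coordinatewise domination. Because $\cZ$ is itself downward closed, the fact that the smaller pair lies outside $\cZ$ implies the same for the larger, so $(j',i') \in \cT(A)$, as required.

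I do not expect a serious obstacle here; the only delicate bookkeeping is the identification of $|\cN^r(j,i) \cap A|$ with the full row-$i$ population of $A$, which is exactly what lets us reduce all the degenerate sub-cases (where at least one of the two points is already in $A$) to one-line consequences of the downward-closedness of $A$ and isolates the actual content of the lemma in the row-count comparison above.
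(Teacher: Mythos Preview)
Your proposal is correct and is precisely the direct verification the paper has in mind; the paper omits the proof entirely (calling it ``a simple verification'' and citing \cite[Lemma~2.3]{GPS}), and your argument---comparing row/column populations using downward-closedness of $A$, then adding the monotonicity of the enhancements, then invoking downward-closedness of $\cZ$---is the expected one-paragraph check.
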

 
We conclude this section with two lemmas about the connection between regular and enhanced dynamics. For $m,n \in \bN$, $A\subseteq R_{n,m}$, and $i\in[0,m-1]$, $j\in[0,n-1]$, 
we denote by $\row_i(A)$ and $\col_j(A)$ the number of elements
in the $i$th row and $j$th column of $A$, respectively. 

\begin{lemma}\label{lemma-reg-enh} 
  Let $m,n \in \bN$ and the zero-set $\Z$ be fixed. 
  Assume $A\subseteq R_{n,m}$ and the enhancements $\vr$ and $\vc$ are chosen so that: for each $i\in [0,m-1]$, $\row_i(A)\le r_i$; 
  and for each $j\in [0,n-1]$, $\col_j(A)\le c_j$.
  Let $\cT$ be the regular dynamics (with zero enhancements) and let 
  $\widehat\cT$ be the dynamics defined by $(\Z, \vec r, \vec c)$. 
  Then, for every $k$, 
  $\cT^k(A)\setminus \widehat\cT^k(\emptyset)\subseteq A$.
\end{lemma}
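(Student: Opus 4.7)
\medskip

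\noindent\textbf{Proof plan.} I would prove this by induction on $k$, with the base case $k=0$ being immediate from $\cT^0(A)=A$ and $\widehat\cT^0(\emptyset)=\emptyset$. The driving intuition is that the row enhancement $r_i$ is a budget large enough to ``pay for'' all the elements of $A$ in row $i$ (and similarly for columns), so whatever the regular dynamics accomplishes with the help of $A$, the enhanced dynamics accomplishes starting from scratch.

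For the inductive step, assume $\cT^k(A)\setminus \widehat\cT^k(\emptyset)\subseteq A$, and pick $v=(j,i)\in \cT^{k+1}(A)$ with $v\notin A$. The goal is to show $v\in \widehat\cT^{k+1}(\emptyset)$. If already $v\in \cT^k(A)$, then by the inductive hypothesis $v\in \widehat\cT^k(\emptyset)\subseteq \widehat\cT^{k+1}(\emptyset)$ and we are done. Otherwise $v$ was freshly added at step $k+1$, i.e.,
\[
\bigl(|\cN^r(v)\cap \cT^k(A)|,\ |\cN^c(v)\cap \cT^k(A)|\bigr)\notin \Z.
\]

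The key step is to transfer this out-of-$\Z$ condition to the enhanced dynamics. Using the inductive hypothesis $\cT^k(A)\subseteq \widehat\cT^k(\emptyset)\cup A$ and the hypotheses $|\cN^r(v)\cap A|\le \row_i(A)\le r_i$ and $|\cN^c(v)\cap A|\le \col_j(A)\le c_j$, I would derive
\[
|\cN^r(v)\cap \cT^k(A)|\le |\cN^r(v)\cap \widehat\cT^k(\emptyset)|+r_i,
\]
and the analogous inequality for columns. Since $\Z$ is downward-closed (being a zero-set), the pair
\[
\bigl(|\cN^r(v)\cap \widehat\cT^k(\emptyset)|+r_i,\ |\cN^c(v)\cap \widehat\cT^k(\emptyset)|+c_j\bigr)
\]
also lies outside $\Z$, hence $v\in \widehat\cT^{k+1}(\emptyset)$ by the definition of the enhanced rule.

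This completes the induction. There is no real obstacle: the lemma is essentially a bookkeeping statement, and the only subtlety is remembering to split into the two cases (whether $v$ was already occupied at time $k$ or is newly added at time $k+1$) and to invoke downward-closedness of $\Z$ when comparing the enhanced row/column counts to the regular ones. Note also that the argument does not require the enhancements to be monotone, so Lemma~\ref{lem-zeroyoung} is not needed here.
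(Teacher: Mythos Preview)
Your proposal is correct and follows essentially the same approach as the paper's proof: both rephrase the claim as $\cT^k(A)\setminus A\subseteq\widehat\cT^k(\emptyset)$, proceed by induction on $k$, and in the inductive step bound $|\cN^r(v)\cap\cT^k(A)|$ by $|\cN^r(v)\cap\widehat\cT^k(\emptyset)|+r_i$ (and analogously for columns) via the inductive hypothesis $\cT^k(A)\subseteq\widehat\cT^k(\emptyset)\cup A$ together with $\row_i(A)\le r_i$, then invoke downward-closedness of $\cZ$. The paper writes the key inequality as an equality plus a bound using the disjoint decomposition $\cT^k(A)=A\cup(\cT^k(A)\setminus A)$, but this is only a cosmetic difference from your union-bound version.
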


 \begin{proof}
 We prove the equivalent claim that $\cT^k(A)\setminus  A\subseteq  \widehat\cT^k(\emptyset)$ by induction on $k$. For $k=0$, both sets in the claim are empty. 
 Assume now the claim is true for some $k$. 
 Then, $\cT^k(A)\setminus A\subseteq \widehat\cT^k(\emptyset)\subseteq \widehat\cT^{k+1}(\emptyset)$.
 Furthermore, for any point 
 $z=(j,i)\in \cT^{k+1}(A)\setminus\cT^k(A)$, 
 $$|\cN^r(z)\cap \cT^k(A)|=|\cN^r(z)\cap A|+|\cN^r(z)\cap (\cT^k(A)\setminus A)|\le r_i+|\cN^r(z)\cap \widehat\cT^k(\emptyset)|,
 $$
 by the assumption and by the inductive hypothesis. Analogously, 
 $$
 |\cN^c(z)\cap \cT^k(A)|\le c_j+|\cN^c(z)\cap \widehat\cT^k(\emptyset)|.
 $$
 Since $z\in \cT^{k+1}(A)$, the definition of $\cT$ yields that  $(|\cN^r(z)\cap \cT^k(A)|,|\cN^c(z)\cap \cT^k(A)|)\notin \cZ$.
 Hence, also $$ (r_i+|\cN^r(z)\cap \widehat\cT^k(\emptyset)|, c_j+|\cN^c(z)\cap \widehat\cT^k(\emptyset)|)\notin\cZ
 $$ 
 and consequently, $z\in \widehat\cT^{k+1}(\emptyset)$.
 \end{proof}

 Fix $m,n\in \bN$, and enhancement vectors $\vec r\in \bZ_+^m$, $\vec c\in \bZ_+^n$.
For every $A\subseteq R_{n,m}$, we define its \emph{enhancement} $\widehat A$ as an arbitrary but fixed subset of 
$R_{n,m}$ such that: 
$A\subseteq \widehat A$; for every row $i\in [0,m-1]$, 
$\row_i(\widehat A) \ge \min(\row_i(A)+r_i,n)$;
and for every column $j\in [0,n-1]$, 
$\col_j(\widehat A) \ge \min(\col_j(A)+c_j,m)$. 
Thus, we obtain the enhancement by adding $r_i$ elements in every row $i$, up to capacity, and analogously for columns. 

\begin{lemma} \label{lemma-enh-reg} 
Let $\cT$ be the standard dynamics given by $\cZ$, $m$, and $n$, and let $\widehat\cT$ be the  enhanced dynamics given by some $(\Z,\vec r, \vec c)$ with $\vec r\in \bZ_+^m$, $\vec c\in \bZ_+^n$.
Then, each $A\subseteq R_{n,m}$ satisfies $\widehat\cT(A)\subseteq \cT(\widehat A)\,.$
\end{lemma}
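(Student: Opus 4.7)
The plan is to verify the inclusion pointwise: fix $z=(j,i)\in\widehat\cT(A)$ and show $z\in\cT(\widehat A)$, splitting into cases based on whether $z$ lies in $\widehat A$ and then exploiting the downward-closedness of $\cZ$.

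First, I would dispose of the easy case $z\in\widehat A$: by the first clause in the definition of a transformation, $\widehat A\subseteq\cT(\widehat A)$, so we are done immediately. So assume $z\notin\widehat A$. Since $A\subseteq\widehat A$, this also gives $z\notin A$, and hence the enhanced rule applied to $z\in\widehat\cT(A)$ reads
\[
(\row_i(A)+r_i,\ \col_j(A)+c_j)\notin\cZ,
\]
using that $|\cN^r(z)\cap A|=\row_i(A)$ and $|\cN^c(z)\cap A|=\col_j(A)$ because $z\notin A$.

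The central step is to argue that the assumption $z\notin\widehat A$ renders the capacity truncation in the definition of $\widehat A$ non-binding. Indeed, if $\row_i(A)+r_i\ge n$, then by the enhancement property $\row_i(\widehat A)\ge \min(\row_i(A)+r_i,n)=n$, meaning the entire row $i$ lies in $\widehat A$, contradicting $z\notin\widehat A$. Therefore $\row_i(A)+r_i<n$, and the enhancement property simplifies to $\row_i(\widehat A)\ge \row_i(A)+r_i$. The symmetric argument for columns yields $\col_j(\widehat A)\ge \col_j(A)+c_j$.

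Combining these bounds with $z\notin\widehat A$ (which again lets us identify $|\cN^r(z)\cap \widehat A|$ with $\row_i(\widehat A)$ and similarly for columns), we obtain
\[
|\cN^r(z)\cap\widehat A|\ge \row_i(A)+r_i,\qquad |\cN^c(z)\cap\widehat A|\ge \col_j(A)+c_j.
\]
Since $\cZ$ is downward-closed and $(\row_i(A)+r_i,\col_j(A)+c_j)\notin\cZ$, any coordinatewise-larger pair is also outside $\cZ$, so $(|\cN^r(z)\cap\widehat A|,|\cN^c(z)\cap\widehat A|)\notin\cZ$, and the definition of $\cT$ gives $z\in\cT(\widehat A)$. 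The only nontrivial step is the capacity argument, but it is really just a two-line observation; I do not anticipate any serious obstacle.
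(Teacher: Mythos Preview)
Your proof is correct and follows essentially the same route as the paper's: both arguments reduce to a point $z\notin A$, compare the row/column counts of $\widehat A$ with $\row_i(A)+r_i$ and $\col_j(A)+c_j$, and invoke the downward-closedness of $\cZ$. The only cosmetic difference is the case split: you dispose of $z\in\widehat A$ upfront and then argue the capacity truncation is non-binding, whereas the paper first removes $z\in A$, then treats the full-row/full-column case (forcing $z\in\widehat A$) separately; your ordering is arguably slightly cleaner since it avoids a tacit ``otherwise $z\in\widehat A$ and we are done'' in the last chain of equalities.
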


\begin{proof}
Since $A\subseteq \widehat A \subseteq \cT(\widehat A)$, it suffices to show that ${\widehat\cT}(A)\setminus A\subseteq \cT(\widehat A)$.
Consider an arbitrary $z=(j,i)\in \widehat\cT(A)\setminus A$. If either 
$\row_i(\widehat A)=n$ or $\col_j(\widehat A)=m$, then $z\in \widehat A\subseteq \cT(\widehat A)$. Otherwise, using the partial order 
on $\bZ_+^2$, 
\begin{equation*}
\begin{aligned}
(|\cN^r(z)\cap\widehat A|, |\cN^c(z)\cap \widehat A|) &=
(\row_i(\widehat A), \col_j(\widehat A))\\
&\ge 
(\row_i(A)+r_i, \col_j(A)+c_j)\\&=(|\cN^r(z)\cap A|+r_i, |\cN^c(z)\cap A|+c_j)\notin\cZ\,, 
\end{aligned}
\end{equation*}  
since $z\in \widehat\cT(A)\setminus A$.
Therefore, $(|\cN^r(z)\cap\widehat A|, |\cN^c(z)\cap \widehat A|)\notin\cZ$, and, hence,
$z\in \cT(\widehat A)$. 
\end{proof}

\section{A connection with extremal graph theory}\label{sec-extremal}

For a family of graphs $\mathcal{F}$, let $\ext(n,\mathcal{F})$ denote the maximum number of edges in a graph on $n$ vertices that does not contain any member of $\mathcal{F}$ as a subgraph. 
The numbers $\ext(n,\mathcal{F})$ are called \emph{Turán numbers}. Their determination is a classical problem in extremal graph theory, even in the special case when $\mathcal{F}$ consists of a single graph $H$ (see, e.g.,~\cite{Bol,AKSV}).

Similarly, the \emph{bipartite Turán problem} considers two positive integers $m$ and $n$ and a family of graphs $\cF$ such that each $H\in \cF$ is a bipartite graph equipped with a bipartition $(X_H,Y_H)$. 
The quantity of interest is then the \emph{bipartite Turán number} $\ext(m,n,\cF)$, the maximum number of edges in a bipartite graph $G$ with bipartition $(X,Y)$ with $|X| = m$ and $|Y| = n$ that does not contain any $H\in \cF$ as a subgraph that respects the given bipartitions (see, e.g.,~\cite{FS}, where the bipartite Turán number is denoted by $\mathbf{ex^\ast}(m,n,\cF)$).
Most research in this direction has addressed the case when $\cF$ contains a single graph $H$. When $H$ is a complete bipartite graph $K_{s,t}$, the problem is known as the 
\emph{Zarankiewicz problem} (see~\cite{Zar}).
While the determination of Zarankiewicz numbers $\ext(m,n,K_{s,t})$ 
is still an open question (see, e.g.,~\cite{CHM,CHM2,Con,ST} for some recent developments), the bipartite Turán problem was completely solved for paths (see~\cite{GyRS}) and partially for cycles (see, e.g.,~\cite{LN}).
A variant of the problem was considered recently by Caro, Patkós, and Tuza (see~\cite{CPT}).

Our results lead to a resolution of the bipartite Turán problem for some families of double stars, including those containing a single double star.
Given two integers $p,q\ge 0$, a \emph{double star} $S_{p,q}$ is the graph obtained from the disjoint union of complete bipartite graphs $K_{1,p}$ and $K_{1,q}$ by adding an edge between a vertex $u$ of degree $p$ in $K_{1,p}$ and a vertex $v$ of degree $q$ in $K_{1,q}$ (note that the vertex $u$ is uniquely determined if $p\neq 1$ and similarly for $v$).
We assume that $S_{p,q}$ is equipped with a bipartition $(X_{p,q},Y_{p,q})$ such that $u\in X_{p,q}$ and $v\in Y_{p,q}$. 
A family $\mathcal{F}$ of double stars is \emph{minimal} if, for all distinct $(p,q)$, $(p',q')$ such that $S_{p,q}, S_{p',q'}\in \cF$, it is not true that $(p,q)\le (p',q')$ (using the partial order in $\bZ_+^2$). 
We will show that the problems of computing the bipartite Turán number for a minimal family of double stars and the Young domination number  are dual to each other, in the sense specified below. 
 
 Assume that $m$ and $n$ are positive integers, $X$ and $Y$ disjoint 
 sets with $|X|=m$ and $|Y|=n$, and that $\mathcal{F}$ is a family of double stars, with fixed bipartitions.  
Observe that, for a given $S_{p,q}\in \cF$, a given bipartite graph $G$ with parts $X$ and $Y$ contains $S_{p,q}$ as a subgraph that respects the bipartitions if and only if there exist vertices $x\in X$, $y\in Y$, so that $xy$ is an edge of $G$, and the degrees of $x$ and $y$ in $G$ are at least $p+1$ and $q+1$, respectively.
Recall that we denote by $\ext(m,n,\mathcal{F})$ the corresponding bipartite Turán number, the maximum number of edges in a bipartite graph $G$ with bipartition $(X,Y)$ that does not contain any $S_{p,q}\in \cF$ as a subgraph that respects the bipartitions.
Note that we may replace $\cF$ by an equivalent minimal 
family by keeping only those $S_{p,q}$ for which $(p,q)$ is minimal.

Let us now connect these notions to Young domination in Hamming rectangles. Recall that, if $\cZ$ is a zero-set, a point $(x,y)$ belongs to $\bZ_+^2\setminus \cZ$ if and only if $(a,b)\le (x,y)$ for some concave corner $(a,b)$ of $\Z$.
To any minimal family $\cF$ of double stars, we associate a Young diagram $\cY_\cF$ whose concave corners are all points $(q,p)$ such that $S_{p,q}\in\cF$.

\begin{theorem}\label{thm-zero-sets-double-stars}
Fix $m,n\in\bN$.
Assume that a zero-set $\Z$  and a minimal family 
$\cF$  of double stars are such that 
$\cZ$ and $\cY_\cF$ are both subsets of $R_{n,m}$
and duals of each other, in the sense of Definition~\ref{def:dual}. 
Then, 
\[\gamma(\cZ, m,n)+\ext(m,n,\mathcal{F}) = mn \,.\]
\end{theorem}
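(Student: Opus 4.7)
\smallskip\noindent\textbf{Proof plan.}
The plan is to establish the equality $\gamma(\cZ,m,n)+\ext(m,n,\cF)=mn$ by exhibiting a size-preserving bijection (up to the constant $mn$) between $\cZ$-dominating sets of $K_m\Box K_n$ and bipartite subgraphs of $K_{m,n}$ that avoid every member of $\cF$.  The bijection is simply edge-complementation.  Identify the vertex set $R_{n,m}$ of $K_m\Box K_n$ with the edge set of $K_{m,n}$ on bipartition $(X,Y)$, $X=[0,m-1]$ (one vertex per row) and $Y=[0,n-1]$ (one vertex per column), so that $(j,i)\in R_{n,m}$ becomes the edge $\{i,j\}$ with $i\in X,\,j\in Y$.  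For $D\subseteq R_{n,m}$ let $G_D$ be the bipartite graph with edge set $R_{n,m}\setminus D$; then $|D|+|E(G_D)|=mn$, $\deg_{G_D}(i)=n-\row_i(D)$ for $i\in X$, and $\deg_{G_D}(j)=m-\col_j(D)$ for $j\in Y$.

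The next step is to translate the domination condition through the Young-diagram duality.  By definition, $D$ is $\cZ$-dominating iff $(\row_i(D),\col_j(D))\notin\cZ$ for every $(j,i)\in R_{n,m}\setminus D$; note that for such $(j,i)$ both counts lie strictly below $n$ and $m$ respectively, so the pair sits inside $R_{n,m}$.  By Lemma~2.2 applied to $\cY_\cF=\widecheck{\cZ}$, one has the involutive identity $\cZ=\rho(R_{n,m}\setminus\cY_\cF)$, and hence $(r,c)\notin\cZ$ is equivalent to $(n-1-r,m-1-c)\in\cY_\cF$.  Substituting the degree formulas from the previous paragraph, the domination condition becomes
\[
(\deg_{G_D}(i)-1,\ \deg_{G_D}(j)-1)\in\cY_\cF\qquad\text{for every edge }\{i,j\}\in E(G_D).
\]

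Now unpack membership in $\cY_\cF$ using its concave corners, which by hypothesis are the points associated to the double stars in $\cF$.  A point belongs to $\cY_\cF$ exactly when it is not componentwise at or above any concave corner of $\cY_\cF$, so the previous condition says: for every edge $\{i,j\}$ of $G_D$ and every $S_{p,q}\in\cF$, the degree pair $(\deg_{G_D}(i),\deg_{G_D}(j))$ fails to dominate the threshold $(p+1,q+1)$, i.e.\ $\deg_{G_D}(i)\le p$ or $\deg_{G_D}(j)\le q$.  By the characterisation recorded just before the theorem statement, this is precisely the assertion that $G_D$ contains no $S_{p,q}\in\cF$ as a subgraph respecting the bipartition.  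Thus $D\mapsto G_D$ restricts to a bijection between $\cZ$-dominating sets and $\cF$-free bipartite graphs on $(X,Y)$, and minimising $|D|$ is the same as maximising $|E(G_D)|$, giving the claimed identity $\gamma(\cZ,m,n)=mn-\ext(m,n,\cF)$.

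The only delicate part of this plan is the bookkeeping in the middle paragraph: one must verify that the coordinate in $\cZ$ corresponding to the row count matches, after the reflection $\rho$, the coordinate in $\cY_\cF$ corresponding to the degree of an $X$-vertex, and similarly for columns and $Y$-vertices; this is what guarantees that the concave corners of $\cY_\cF$ indexed by $S_{p,q}$ encode the correct forbidden degree pattern $(p+1,q+1)$.  Minor attention is also needed for concave corners on the boundary of $R_{n,m}$ (e.g.\ when $\cY_\cF=R_{n,m}$) and for the edge cases $\cF=\emptyset$ or $\cZ=\emptyset$, where the identity degenerates to $mn+0=mn$ and $0+mn=mn$, respectively.
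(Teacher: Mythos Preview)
Your proof is correct and follows essentially the same approach as the paper's: identify vertices of $K_m\Box K_n$ with edges of $K_{m,n}$, pass to the bipartite complement, and translate the $\cZ$-domination condition into the $\cF$-free condition via Young-diagram duality. The only cosmetic difference is that you invoke the duality through the reflection characterisation of Lemma~2.2 (i.e., $(r,c)\notin\cZ \Leftrightarrow (n{-}1{-}r,\,m{-}1{-}c)\in\cY_\cF$) and then read off membership in $\cY_\cF$ via its concave corners, whereas the paper argues via the concave corners of $\cZ$ and the convex corners of $\widecheck{\cZ}$ to determine which double stars are allowed; these are two equivalent ways of unpacking the same involution. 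Your flag about the coordinate bookkeeping is well-placed: the match between the first coordinate of $\cY_\cF$ and the $X$-degree (and second coordinate with the $Y$-degree) is exactly what needs checking, and it is consistent with how the corollaries (e.g., Corollary~3.3 with $\cF=\{S_{a,b}\}$ and $\cY_\cF=V_{a,b}$) are stated.
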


\begin{proof}
By definition, $\gamma(\cZ, m, n)$ is the minimum cardinality of a set $A\subseteq V(K_m\Box K_n)$ such that every vertex $v\in V(K_m\Box K_n)\setminus A$ satisfies $(|\cN^{r}(v)\cap A|,|\cN^{c}(v)\cap A|)\not\in\cZ$, that is, there exists a concave corner $z = (a,b)$ of $\cZ$ such that $|\cN^{r}(v)\cap A|\ge a$ and $|\cN^{c}(v)\cap A|\ge b$.
Let us identify the vertices of $K_m\Box K_n$ with the edges of the complete bipartite graph $K_{m,n}$ with bipartition $(X,Y) = (V(K_m),V(K_n))$.
Then, $A$ corresponds to a set of edges of $K_{m,n}$ such that for every edge $(i,j)$ of $K_{m,n}$ that is not in $A$ there exists a concave corner $(a,b)\in \mathcal{I}_\Z$ such that vertex $i\in X$ is incident with at least $a$ edges in $A$ and vertex $j\in Y$ is incident with at least $b$ edges in $A$.
Let $A^c$ denote the bipartite complement of $A$ in $K_{m,n}$, that is, the bipartite graph with bipartition $(X,Y)$ in which two vertices $i\in X$ and $j\in Y$ are adjacent if and only if $(i,j)\not\in A$.
The above condition on $A$ can be equivalently phrased in terms of $A^c$, as follows: for every edge $(i,j)$ of $A^c$ there exists a concave corner $(a,b)\in \mathcal{I}_\Z$ such that $i$ is incident with at most $n-a$ edges in $A^c$ and $j$ is incident with at most $m-b$ edges in $A^c$.
This is in turn equivalent to the following condition: for all $p,q\in \mathbb{Z}_+$, if a double star $S_{p,q}$ is a subgraph of $A^c$ such that the vertex of degree $p+1$ belongs to $X$ and the vertex of degree $q+1$ belongs to $Y$, then there exists a concave corner $(a,b)\in \mathcal{I}_\Z$ such that $p\le n-a-1$ and $q\le m-b-1$ (note that since $p\ge 0$, we have that $a\le n-1$ and, similarly, $b\le m-1$).
In other words, the double stars that can appear as subgraphs of $A^c$ are precisely the double stars $S_{p,q}$ such that $(p,q)$ is an element of the dual $\widecheck\Z$ of $\Z$.
Equivalently, the minimal family of double stars that are forbidden for $A^c$ are precisely the stars in $\mathcal{F}$.
Hence, the maximum possible number of edges in $A^c$ is given by the value of $\ext(m,n,\mathcal{F})$.
Since the number of edges in $A^c$ equals $mn-|A|$, minimizing the cardinality of $A$ is equivalent to maximizing the cardinality of $A^c$, which implies $\gamma(\cZ, m,n)  = mn-\ext(m,n,\mathcal{F})$, as claimed.
\end{proof}

Our results on exact evaluation of $\gamma$ then immediately give the following results. 
In particular, the next corollary is given by Proposition~\ref{prop-L-shape}.
Note that the double star $S_{a,0}$ is isomorphic to the complete bipartite graph $K_{1,a+1}$ with parts $X_{a,0}$ and $Y_{a,0}$ such that $X_{a,0}$ consists of a single vertex of degree $a+1$ (and, hence, every vertex in $Y_{a,0}$ has degree $1$).
A similar observation holds for the double star $S_{0,b}$. 
Therefore, for $\cF=\{S_{a,0}, S_{0,b}\}$, the quantity $\ext(m,n,\mathcal{F})$ counts the maximum number of edges in a bipartite graph with parts $X$ and $Y$ of sizes $m$ and $n$, respectively, such that each vertex in $X$ has degree at most $a$ and each vertex in $Y$ has degree at most $b$.

\begin{corollary}\label{cor-V-domination}
For $0\le a\le n$, $0\le b\le m$, we have $\widecheck R_{a,b}=V_{n-a, m-b}$. 
Consequently, for the 
minimal family $\cF=\{S_{a,0}, S_{0,b}\}$, 
$$\ext(m,n,\mathcal{F})=\min\{am,bn\}\,.$$ 
\end{corollary}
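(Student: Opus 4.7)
My plan is to prove the identity for the dual first, and then combine it with Proposition~\ref{prop-L-shape} and Theorem~\ref{thm-zero-sets-double-stars} to get the bipartite Tur\'an number.

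First, I would establish $\widecheck R_{a,b} = V_{n-a,m-b}$ directly using the formula $\widecheck\cY = \rho(R_{n,m}\setminus\cY)$ from the lemma following Definition~\ref{def:dual}. Writing the complement $R_{n,m}\setminus R_{a,b}$ as the L-shaped union $([a,n-1]\times[0,m-1])\cup([0,n-1]\times[b,m-1])$ and applying the centroid reflection $\rho(x,y)=(n-1-x, m-1-y)$ yields the union $([0,n-a-1]\times[0,m-1])\cup([0,n-1]\times[0,m-b-1])$, which one immediately recognizes as $V_{n-a,m-b}\cap R_{n,m}$. The degenerate cases $a \in \{0, n\}$ or $b \in \{0, m\}$ amount to one of the two rectangles being empty or all of $R_{n,m}$, and are checked by inspection.

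For the Tur\'an statement, I would set $\cZ := V_{n-a,m-b}$. By the first part of the corollary together with the involution property, $\widecheck \cZ = R_{a,b}$; its concave corners (inside $R_{n,m}$) are $(a,0)$ and $(0,b)$, which, under the bijection between concave corners of $\widecheck\cZ$ and minimal forbidden double stars established in the proof of Theorem~\ref{thm-zero-sets-double-stars}, correspond exactly to the family $\cF=\{S_{a,0}, S_{0,b}\}$. Hence $\cZ$ and $\cY_\cF$ are duals inside $R_{n,m}$ and Theorem~\ref{thm-zero-sets-double-stars} gives
$\gamma(V_{n-a,m-b},m,n) + \ext(m,n,\cF) = mn.$
Proposition~\ref{prop-L-shape} evaluates $\gamma(V_{n-a,m-b},m,n) = \max((n-a)m, (m-b)n) = mn - \min(am, bn)$, and subtracting yields $\ext(m,n,\cF) = \min\{am,bn\}$, as required.

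The main effort is in the coordinate bookkeeping for the dual computation and in tracking which parameter of the double stars corresponds to which axis of the Young diagram $\widecheck\cZ$; beyond this, the argument is a short chain of substitutions and I do not expect any serious obstacle.
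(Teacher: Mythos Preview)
Your proposal is correct and follows essentially the same approach as the paper: compute the dual of $R_{a,b}$ (the paper just says this ``follows directly from the definitions,'' while you spell it out via the formula $\widecheck\cY=\rho(R_{n,m}\setminus\cY)$), identify $\cY_\cF=R_{a,b}$, and then combine Theorem~\ref{thm-zero-sets-double-stars} with Proposition~\ref{prop-L-shape} to read off $\ext(m,n,\cF)=mn-\max((n-a)m,(m-b)n)=\min\{am,bn\}$. The extra detail you give on the dual computation and the edge cases is fine and does not change the structure of the argument.
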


\begin{proof} Note that $\cY_\cF=R_{a,b}$.
The equality $\widecheck R_{a,b}=V_{n-a, m-b}$ follows directly from the definitions.
By Proposition~\ref{prop-L-shape}, 
$\ext(m,n,\mathcal{F})=mn-
\gamma(V_{n-a, m-b},m,n) = mn-\max((n-a)m, (m-b)n) = 
\min\{am,bn\}$.
\end{proof}

\begin{corollary}\label{cor-L-domination}
For $0\le a\le n$, $0\le b\le m$, we have 
$\widecheck V_{a,b}=R_{n-a, m-b}$. Consequently, for the 
minimal family $\cF=\{S_{a,b}\}$, 
$$\ext(m,n,\mathcal{F})=mn-\gamma(R_{n-a,m-b}, m,n), 
$$  
given by Theorem~\ref{thm-line}.
\end{corollary}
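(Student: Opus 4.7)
The plan is to piggyback on the duality machinery already set up, with essentially no new combinatorial input. For the identity $\widecheck V_{a,b}=R_{n-a,m-b}$, I would apply Corollary~\ref{cor-V-domination} with the parameters $n-a$ and $m-b$ in place of $a$ and $b$, obtaining $\widecheck R_{n-a,m-b}=V_{a,b}$. Invoking the lemma immediately following Definition~\ref{def:dual}, which asserts that $\cY\mapsto\widecheck\cY$ is an involution, and applying $\widecheck{\,\cdot\,}$ to both sides, the identity $\widecheck V_{a,b}=R_{n-a,m-b}$ falls out at once.

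For the Tur\'an-number formula, I would invoke Theorem~\ref{thm-zero-sets-double-stars} with the zero-set $\cZ=R_{n-a,m-b}$ and the minimal family $\cF=\{S_{a,b}\}$. To verify its hypothesis I need $\widecheck\cZ=\cY_\cF$ in $R_{n,m}$: the first part of the corollary gives $\widecheck\cZ=V_{a,b}$, whose unique concave corner is $(a,b)$, while by the definition of $\cY_\cF$, the Young diagram associated with the single double star $S_{a,b}$ has a single concave corner matching the same pair, so $\cY_\cF=V_{a,b}$ as well. Theorem~\ref{thm-zero-sets-double-stars} then yields $\gamma(R_{n-a,m-b},m,n)+\ext(m,n,\cF)=mn$, which rearranges to the claimed formula; the actual value of $\gamma(R_{n-a,m-b},m,n)$ is supplied by Theorem~\ref{thm-line} in any given regime.

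There is no substantive obstacle here: the only bookkeeping is to make sure that the coordinate convention used in the association $S_{p,q}\leftrightarrow$ concave corner of $\cY_\cF$ is consistent with the ``row count, column count'' labeling of concave corners of the zero-set, as was already implicit in the proof of Corollary~\ref{cor-V-domination}. Since $\cF=\{S_{a,b}\}$ and $V_{a,b}$ each have a single concave corner, this check reduces to matching one pair of integers.
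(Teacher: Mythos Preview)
Your proposal is correct and is precisely the derivation the paper has in mind; in fact, the paper omits the proof of this corollary entirely, treating it as immediate from Theorem~\ref{thm-zero-sets-double-stars}, Corollary~\ref{cor-V-domination}, and the involution lemma, which is exactly the chain you invoke. Your caution about the $(p,q)$ versus $(q,p)$ bookkeeping is well placed but, as you note, resolves the same way it did in Corollary~\ref{cor-V-domination}.
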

\begin{corollary}\label{cor-boot-domination} For $1\le a\le m+n-2$, 
the dual of $T_a\cap R_{n,m}$, in the sense of Definition~\ref{def:dual}, is $T_{m+n-1-a}\cap R_{n,m}$. 
Consequently, for $m=n$, $a\le 2n-2$, and the minimal family
 $\cF=\{S_{i,j}\colon i+j=a\}$, we have
$$\ext(n,n,\mathcal{F})=n^2-\gamma(T_{2n-1-a}, n,n), 
$$  
given by Theorem~\ref{thm-boot}.
\end{corollary}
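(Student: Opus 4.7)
The plan is to establish the corollary in two stages: first, verify the duality claim $\widecheck{T_a\cap R_{n,m}} = T_{m+n-1-a}\cap R_{n,m}$ using the centroid-reflection description of the dual, and second, specialize to $m=n$ and combine this with Theorem~\ref{thm-zero-sets-double-stars} and Theorem~\ref{thm-boot}.

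For the duality, I would invoke the lemma following Definition~\ref{def:dual}, which identifies $\widecheck{\cY}=\rho(R_{n,m}\setminus\cY)$ where $\rho$ is reflection through the centroid $((n-1)/2,(m-1)/2)$. Taking $\cY=T_a\cap R_{n,m}$, the complement inside the rectangle is
\[
R_{n,m}\setminus\cY=\{(x,y)\in R_{n,m}:x+y\ge a\}.
\]
Under $\rho\colon(x,y)\mapsto(n-1-x,m-1-y)$, setting $(x',y')=(n-1-x,m-1-y)$ turns the constraint $x+y\ge a$ into $x'+y'\le m+n-2-a$, so the image is precisely $T_{m+n-1-a}\cap R_{n,m}$. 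The hypothesis $1\le a\le m+n-2$ guarantees that $m+n-1-a$ is a legitimate index in $\{1,\dots,m+n-2\}$, so the dual is a nontrivial proper Young diagram in $R_{n,m}$.

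For the Turán consequence, I would first observe that $\cF=\{S_{i,j}\colon i+j=a\}$ is minimal, since any two distinct pairs on the anti-diagonal $i+j=a$ are incomparable in the natural partial order on $\bZ_+^2$. By the definition of $\cY_\cF$, its concave corners are $\{(j,i)\colon i+j=a\}$, so $\cY_\cF=T_a\cap R_n$ when $m=n$. By the duality just established, $\cZ:=T_{2n-1-a}\cap R_n$ is dual to $\cY_\cF$, and Theorem~\ref{thm-zero-sets-double-stars} yields
\[
\gamma(\cZ,n,n)+\ext(n,n,\cF)=n^2,
\]
which rearranges to the stated identity $\ext(n,n,\cF)=n^2-\gamma(T_{2n-1-a},n,n)$. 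Since $1\le a\le 2n-2$ forces $1\le 2n-1-a\le 2n-2$, Theorem~\ref{thm-boot} applies to the right-hand side and supplies the explicit closed form.

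No step will present a substantial obstacle, as the argument is a direct translation between Young-diagram duality and the double-star encoding of forbidden bipartite subgraphs supplied by Theorem~\ref{thm-zero-sets-double-stars}. The only mild delicacy lies in making sure that, near the boundary of the rectangle, the concave corners of $T_a\cap R_{n,m}$ (rather than of $T_a$ viewed in $\bZ_+^2$) match those specified in Definition~\ref{def:dual}; the reflection description $\widecheck{\cY}=\rho(R_{n,m}\setminus\cY)$ handles this uniformly without any case analysis, so the verification proceeds without additional bookkeeping.
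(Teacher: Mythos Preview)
Your argument is sound and matches the paper's intended (unwritten) derivation: the duality via the reflection lemma is handled correctly, and invoking Theorem~\ref{thm-zero-sets-double-stars} with Theorem~\ref{thm-boot} is exactly the route. One small imprecision deserves a sentence of repair. For $n<a\le 2n-2$ the family $\cF=\{S_{i,j}:i+j=a\}$ has $\cY_\cF=T_a$ (its concave corners are \emph{all} points $(j,i)$ with $i+j=a$), not $T_a\cap R_n$, and $T_a\not\subseteq R_n$, so the hypothesis $\cY_\cF\subseteq R_{n,m}$ of Theorem~\ref{thm-zero-sets-double-stars} is not literally met. The fix is immediate: any $S_{i,j}$ with $i\ge n$ or $j\ge n$ cannot occur in a bipartite graph with parts of size $n$, so $\ext(n,n,\cF)=\ext(n,n,\cF')$ where $\cF'$ is the minimal family with $\cY_{\cF'}=T_a\cap R_n$ (its concave corners consist of the anti-diagonal points inside $R_n$ together with $(n,0)$ and $(0,n)$, and the latter two contribute only unrealizable stars). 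Theorem~\ref{thm-zero-sets-double-stars} then applies verbatim to $\cF'$ and yields the stated identity. Your closing remark already flags the boundary subtlety, but you attribute it only to the duality step; it is the identification of $\cY_\cF$ that actually needs this care.
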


\section{Young domination with $L$-shaped and rectangular zero-sets}\label{sec-line}

\begin{lemma}\label{lemma-ab-fill} Fix integers
$n\ge a\ge 1$ and $m\ge 1$, and let $R_{n,m}$ be the vertex set of the graph $K_m\Box K_n$.
Then, there exists a set $S\subseteq R_{n,m}$ such that each row of $R_{n,m}$ 
contains exactly $a$ elements 
of $S$, and every column contains at least $\lfloor am/n \rfloor$ and at 
most $\lceil am/n \rceil$ elements of $S$. 
\end{lemma}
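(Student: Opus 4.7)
The plan is to exhibit $S$ explicitly via a cyclic diagonal pattern. Concretely, I would enumerate the $am$ intended elements as $t = 0, 1, \ldots, am-1$ and place the $t$-th element at the position $(t \bmod n,\, \lfloor t/a \rfloor) \in R_{n,m}$; that is, set
\[
S = \bigl\{ (t \bmod n,\, \lfloor t/a \rfloor) : 0 \le t \le am - 1 \bigr\}.
\]
Equivalently, row $i$ receives the $a$ consecutive columns $(ia)\bmod n, (ia+1)\bmod n, \ldots, (ia+a-1)\bmod n$.

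To verify the row property, I would observe that the values of $t$ with $\lfloor t/a \rfloor = i$ are exactly $t \in \{ia, ia+1, \ldots, ia + a - 1\}$, which land in columns $(ia + k) \bmod n$ for $k = 0, \ldots, a - 1$. These are $a$ consecutive residues modulo $n$, and since $a \le n$ they are pairwise distinct; hence row $i$ contains exactly $a$ members of $S$.

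For the column property, the count in column $j$ is the number of $t \in [0, am-1]$ with $t \equiv j \pmod n$. Writing $am = qn + r$ with $0 \le r < n$, this count equals $q+1$ when $j < r$ and $q$ when $j \ge r$; in either case it lies in $\{\lfloor am/n \rfloor, \lceil am/n \rceil\}$, giving the required bounds (and also distinctness within each column is automatic, since no two $t$'s in the same residue class are sent to the same row as long as the row index $\lfloor t/a \rfloor$ strictly grows every $a$ steps).

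There is no real technical obstacle: the whole argument is a direct verification. The only subtlety is choosing the right shift. The more obvious ``shift by one per row'' rule $\{((i+k) \bmod n, i) : 0 \le k \le a-1\}$ also gives $a$ distinct elements per row, but can fail the column bound -- e.g.\ for $m=3$, $n=5$, $a=2$ it leaves column $4$ empty, violating $\lfloor am/n\rfloor = 1$. The ``shift by $a$'' rule succeeds precisely because it partitions the $am$ consecutive integers $\{0, 1, \ldots, am-1\}$ among the $n$ residue classes modulo $n$, which is automatically balanced within $\pm 1$.
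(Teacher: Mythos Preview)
Your proof is correct and takes essentially the same approach as the paper: both place in row $i$ the $a$ consecutive columns $ia,\,ia+1,\,\ldots,\,ia+a-1$ taken modulo $n$. Your parametrization by $t\in[0,am-1]$ and the observation that the column counts are exactly the sizes of the residue classes of $\{0,\ldots,am-1\}$ modulo $n$ make the verification of the column bounds cleaner and more explicit than the paper's informal ``every time an index reaches $n-1$ / reaches $0$'' bookkeeping, but the construction and the underlying idea are identical.
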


\begin{proof}
On each row, $S$ contains an interval of $a$ sites, starting at the horizontal position of the first site after the interval of the previous row ends, 
with wraparound boundary. 
More formally, on each row $i\in [0,m-1]$ the interval is at column indices $ia, ia+1, \ldots, ia+a-1$, 
taken modulo $n$. 
Every time one of these column indices 
reaches $n-1$, the minimal number of elements in $S$
in every column increases by 1, 
and every time one of these column indices 
reaches $0$, the maximal number of elements in every column increases by 1. 
Therefore, the final minimal (resp.~maximal) number of sites in every column 
is 
$\lfloor am/n \rfloor$ (resp.~$\lceil am/n \rceil$). 
\end{proof}

We can now prove Proposition~\ref{prop-L-shape}, which determines the minimum cardinality $\gamma$ of a \hbox{$\cZ$-dominating} set where the requirement is that each unoccupied vertex has at least $a$ occupied row neighbors and at least $b$ occupied column neighbors.

\begin{proof}[Proof of Proposition~\ref{prop-L-shape}]
We may assume that $am\ge bn$ and $a\ge 1$. 
It is clear that every row needs at least 
$a$ occupied sites, so $\gamma\ge am$. Now we need to construct a configuration in which every row contains exactly $a$ sites, 
and also every column contains $b$ or more sites. As  
$\lfloor ma/n\rfloor\ge b$, the above lemma provides this. 
\end{proof}

For the rest of this section we assume that $\cZ=R_{a,b}$, where $1\le a\le n$, $1\le b\le m$, and prove Theorem~\ref{thm-line}. We recall that, in this case, 
a $\cZ$-dominating set $S$ is such that for every $z\in R_{n,m}\setminus S$ there are at least $a$ points of $S$
in the row of $z$ or at least $b$ points of $S$ in the column of $z$. 

\begin{proof}[Proof of (\ref{eq-line-min})]
In this proof, we count rows starting from the top and 
columns starting from the left, as for matrices. 
Also, the dimensions of a rectangular block within a matrix are given as $(\text{number of rows})\times (\text{number of columns})$.

To prove the lower bound, assume $S$ is a set that realizes $\gamma$. 
For some $x\in[0,m]$, exactly $m-x$ rows 
contain at most $a-1$ elements of $S$, 
and we may assume those are the first $m-x$ rows. For some $y\in[0,n]$, exactly $n-y$ columns do not lack any of the elements of $S$ among the first $m-x$ row positions, and we may assume those are the leftmost columns. 
It follows that $S$ contains the $(m-x)\times (n-y)$ block in the top left corner, that each of 
the rows $m-x+1,\ldots, m$ contains at least $a$ elements of $S$ (by definition of $x$), and all 
the columns $n-y+1,\ldots, n$ contain at least $b$ elements of $S$ (as $S$ is $\cZ$-dominating). 
This gives the lower bound of the same form as 
(\ref{eq-line-min}), except the lower bounds on $x$ and $y$ are $1$ instead of $b$ and $a$. 
We next show that the minimum is achieved 
for $x\ge b$ and $y\ge a$. 

If $x<b$, then we may, without changing the size of $S$ or affecting its Young dominance, move points of $S$
within the last $y$ columns downwards until $S$ contains the entire $x\times y$ block. Analogously, we may also assume that $S$ contains 
this block when $y<a$. 

Assume first that $x<b$ and $y\ge a$. Then, in the last $y$ rows, $S$ has exactly $b-x$ points outside the $x\times y$ block in each column. 
Therefore
\begin{equation*}
\begin{aligned}
|S|&=(m-x)(n-y)+ax+y(b-x)
\\&=(a-n)x+(b-m)y+mn, 
\end{aligned}
\end{equation*}
which is nonincreasing in $x$, 
and so we can construct a new $\cZ$-dominating set $S'$, with $x$ replaced by $x+1$, such that $|S'|\le |S|$. 
The case $x\ge b$, $y<a$ is   eliminated similarly. 

If $x<b$, $y<a$, then $|S|$ has a similar form:
\begin{equation*}
\begin{aligned}
|S|&=(m-x)(n-y)+xy+y(b-x)+x(a-y)\\
&=
(a-n)x+(b-m)y +mn,
\end{aligned}
\end{equation*}
so the size of $S$ decreases if we increase either $x$ 
or $y$. This establishes the $\ge$ part of 
(\ref{eq-line-min}). 

To prove the $\le$ part, we construct 
a $\cZ$-dominating set of size 
$(m-x)(n-y)+\max(ax,by)$ whenever
$b\le x\le m$ and $a\le y\le n$. 
We may assume that $ax\ge by$. 
Our set will have the block structure
$$
B=\begin{bmatrix}
 &B_{11}^{(m-x)\times (n-y)}
 &B_{12}^{(m-x)\times y}\\
 &B_{21}^{x\times (n-y)}
 &B_{22}^{x\times y}
\end{bmatrix},
$$
with the blocks $B_{12}$ and $B_{21}$ comprised of $0$s, 
the block $B_{11}$ comprised of $1$s, and the block 
$B_{22}$ chosen so that there are exactly $a$ 1s in each 
row and at least $b$ 1s in each column. This is possible by 
Lemma~\ref{lemma-ab-fill} and clearly the resulting 
$B$ gives a $\cZ$-dominating set.
\end{proof}

\begin{proof}[Proof of (\ref{eq-line-sym})]
   Let $\varphi(y)=(n-y)^2+ay$ for all $y\in \mathbb{Z}$.
   We may assume that $x\le y$ in (\ref{eq-line-min}), which, by minimizing 
    over $x$ first, 
    gives $\gamma=\min_{a\le y\le n} \varphi(y)$. 
    But $\varphi$ achieves 
    its global minimum at $\lceil (2n-a-1)/2\rceil$.
    Therefore, $\gamma = \varphi(\max(a,\lceil (2n-a-1)/2\rceil))$, which can be checked to equal the expression in (\ref{eq-line-sym}).
\end{proof}

\begin{proof}[Proof of the upper bound on the computation time of (\ref{eq-line-min})] 
We may assume that $a\le b$. 
For a fixed $y$, optimization over $x$ in the two cases (when the maximum 
equals its first or its second argument) amounts to minimizing a linear 
function, and thus is achieved at $x=\lfloor by/a\rfloor$ or $x=\lceil by/a\rceil$ or at an expression that does not depend on $y$. 
Let $a_1=a/\gcd(a,b)$ and $b_1=b/\gcd(a,b)$. 
If we write $y=ja_1+r$, where $0\le r< a_1$, then $\lfloor by/a\rfloor=b_1\cdot j+\lfloor b_1r/a_1\rfloor$
and $\lceil by/a\rceil=b_1\cdot j+\lceil b_1r/a_1\rceil$. 
We then perform optimization over $j$ first, for a fixed $r$. In every case, we need to compute 
the minimum of a function that is at most quadratic in $j$, which we 
can solve explicitly. Finally, we perform minimum over $r$, which we can do in $\cO(a_1)$ steps.
\end{proof}

\section{Exact results for $a$-domination on Hamming squares}\label{sec-boot}

Throughout this section, we consider $a$-domination on the 
Hamming square, that is, we assume $m=n$, that $\cZ$ is the 
triangular set $T_a$ given by (\ref{eq-bootstrap}), and study 
$\gamma=\gamma(\cZ,n,n)$. We prove Theorem~\ref{thm-boot} in the 
three cases in order in the next three subsections. 

\subsection{Even $a$}\label{subsec-even-a}

\begin{lemma}\label{lemma-boot-alla-lb}
For all positive integers $a$ and $n$ such that $a\le 2n$, it holds that $\gamma\ge an/2$.
\end{lemma}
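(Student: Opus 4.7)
The plan is to establish the lower bound via a standard double-counting argument combined with Cauchy--Schwarz, exploiting the very simple form of the $\cZ$-domination condition for $\cZ = T_a$.

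Let $A\subseteq R_{n,n}$ be any $\cZ$-dominating set and write $s = |A|$, $r_i = \row_i(A)$ for $i\in[0,n-1]$, and $c_j = \col_j(A)$ for $j\in[0,n-1]$, so that $\sum_i r_i = \sum_j c_j = s$. Since $\cZ = T_a = \{(x,y): x+y \le a-1\}$, the $\cZ$-domination condition unpacks as follows: for every $(j,i) \notin A$ we have $r_i + c_j \ge a$ (the neighborhoods exclude $(j,i)$ itself, but $(j,i)\notin A$ so this makes no difference).

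Next I would sum this inequality over all $(j,i) \in R_{n,n} \setminus A$, yielding
\[
a(n^2 - s) \;\le\; \sum_{(j,i)\notin A} (r_i + c_j) \;=\; \sum_i r_i(n - r_i) + \sum_j c_j(n - c_j) \;=\; 2ns - \sum_i r_i^2 - \sum_j c_j^2,
\]
where the middle equality rewrites the sum by grouping the contribution of each row and each column. Then Cauchy--Schwarz applied to $\sum r_i = \sum c_j = s$ gives $\sum r_i^2 + \sum c_j^2 \ge 2s^2/n$, so
\[
a(n^2 - s) \;\le\; 2ns - \tfrac{2s^2}{n}.
\]
Rearranging yields the quadratic inequality $2s^2 - n(2n+a)s + an^3 \le 0$, whose discriminant factors as $n^2(2n-a)^2$ (nonnegative by the hypothesis $a\le 2n$), giving roots $s = an/2$ and $s = n^2$. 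Since $s\le n^2$ trivially, we conclude $s \ge an/2$, as desired.

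The argument is essentially routine; the only potentially subtle point is getting the direction of Cauchy--Schwarz right (we need a \emph{lower} bound on $\sum r_i^2 + \sum c_j^2$ in order to obtain an \emph{upper} bound on $2ns - \sum r_i^2 - \sum c_j^2$, which is then squeezed against $a(n^2-s)$). I expect no major obstacle: the discriminant factoring as a perfect square $(2n-a)^2$ is exactly what makes $an/2$ appear cleanly as a root, and this is consistent with the fact that the even-$a$ formula in Theorem~\ref{thm-boot} is $an/2$.
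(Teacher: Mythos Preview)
Your proof is correct and essentially identical to the paper's: both sum the domination inequality $r_i+c_j\ge a$ over the uncovered points, rewrite the resulting sum as $2ns-\sum_i r_i^2-\sum_j c_j^2$, apply Cauchy--Schwarz to obtain the quadratic $2s^2-n(2n+a)s+an^3\le 0$, and read off $s\ge an/2$ from the smaller root. The only cosmetic difference is that the paper phrases things via indicator variables $x_{ij}$ rather than directly in terms of the set $A$.
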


\begin{proof}
To prove that $\gamma\ge an/2$, we begin with 
a reformulation of the claim:
if $x_{ij}\in\{0,1\}$, $i,j=1,\ldots, n$, are such that for every $i,j$
\begin{equation}\label{eq-even-a-1}
\sum_{k, k\ne j}(1-x_{ij})x_{ik}+\sum_{k, k\ne i}(1-x_{ij})x_{kj}\ge a(1-x_{ij}), 
\end{equation}
then 
\begin{equation}\label{eq-even-a-2}
\sigma:=\sum_{i,j}x_{ij}\ge \frac{na}2.
\end{equation}

Let $r_i=\sum_j x_{ij}$ and $c_j=\sum_i x_{ij}$ 
be the row and column sums. 
Summing (\ref{eq-even-a-1}) over all $i,j$, the right-hand side sums to $a(n^2-\sigma)$. The first terms of the left-hand side sum to 
\begin{equation}\label{eq-even-a-3}
\begin{aligned}
\sum_i \sum_{j,k,k\ne j} (1-x_{ij})x_{ik}&=\sum_i|\{(k,j)\colon x_{ik} = 1\text{ and }x_{ij} = 0\}|\\
&=\sum_i r_i(n-r_i)\\&=n\sigma-\sum_i r_i^2.
\end{aligned}
\end{equation}
Similarly, the second terms on the left-hand side of (\ref{eq-even-a-1}) sum to 
\begin{equation}\label{eq-even-a-4}
n\sigma-\sum_j c_j^2.
\end{equation}
Note that, by the Cauchy–Bunyakovsky–Schwarz inequality applied to the vectors \hbox{$(1,\ldots, 1)\in \mathbb{R}^n$} and $(r_1,\ldots, r_n)$, we have that $\sigma^2 = (\sum_i r_i)^2\le n\cdot \sum_i r_i^2$.
Therefore
$$
\sum_i r_i^2\ge \sigma^2/n, \quad \sum_j c_j^2\ge \sigma^2/n, 
$$
so that (\ref{eq-even-a-3}, \ref{eq-even-a-4}) yield the inequality
$$
2n\sigma-\frac{2\sigma^2}{n}\ge an^2-a\sigma,
$$
that is,
\begin{equation}\label{eq-even-a-5}
2\sigma^2-n(2n+a)\sigma+an^3\le 0, 
\end{equation}
and, as $a\le 2n$, the left zero of the quadratic function is at $an/2$. 
Thus, (\ref{eq-even-a-2}) follows.
\end{proof}

\begin{proof}[Proof of Theorem~\ref{thm-boot} for even $a$]
Here, we need to prove (\ref{eq-boot-even-a}). 
The inequality $\gamma\le an/2$ is proved in Corollary 11 of \cite{BLL}, but also follows from 
observing that the set in Lemma~\ref{lemma-ab-fill}, with $m=n$ and $a$ replaced by $a/2$, is $a$-dominating. The opposite inequality follows 
from Lemma~\ref{lemma-boot-alla-lb}. 
\end{proof}

\subsection{Odd $a\le n$}\label{subsec-odd-a<n}

We first state a lemma on a common design of dominating sets. 

\begin{lemma}\label{lemma-domination-check}
Assume that a set $D\subseteq R_{n}$ contains at least $b$ elements in each row and column of $R_{n}$. 
Furthermore, assume that $(i,j)\in D$ for every 
$(i,j)$ such that both row $i$ and column $j$ contain exactly $b$ elements
of $D$. Then $D$ is $(2b+1)$-dominating. 
\end{lemma}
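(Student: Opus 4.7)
The plan is to unfold the definition of $(2b+1)$-domination for $\cZ=T_{2b+1}$. Recall that $T_{2b+1}=\{(x,y)\in\bZ_+^2 : x+y\le 2b\}$, so a point $(i,j)\notin D$ becomes occupied in one step precisely when
\[
|\cN^r(i,j)\cap D|+|\cN^c(i,j)\cap D|\ge 2b+1.
\]
Since $(i,j)\notin D$, the row neighborhood count $|\cN^r(i,j)\cap D|$ equals the total number $r_i$ of elements of $D$ in row $i$, and similarly $|\cN^c(i,j)\cap D|=c_j$, the total number of elements of $D$ in column $j$. Thus the task reduces to proving the pointwise inequality $r_i+c_j\ge 2b+1$ for every $(i,j)\in R_n\setminus D$.

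The first hypothesis gives $r_i\ge b$ and $c_j\ge b$, so immediately $r_i+c_j\ge 2b$. The only way this can fail to be at least $2b+1$ is the borderline case $r_i=c_j=b$. But the second hypothesis explicitly handles exactly this case: if row $i$ and column $j$ each contain exactly $b$ elements of $D$, then $(i,j)\in D$, contradicting $(i,j)\notin D$. Hence the inequality is strict whenever $(i,j)\notin D$, and $D$ is $(2b+1)$-dominating.

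There is no real obstacle to overcome: the statement is essentially a definition check, and the extra hypothesis is tailored to close precisely the single boundary case $r_i=c_j=b$ that the lower bounds $r_i,c_j\ge b$ leave open.
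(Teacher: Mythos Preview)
Your proof is correct and follows essentially the same approach as the paper's: both reduce to showing $r_i+c_j\ge 2b+1$ for every $(i,j)\notin D$, with the key observation that the borderline case $r_i=c_j=b$ is ruled out by the second hypothesis. The paper adds a preliminary row/column permutation to structure the case analysis, but this is not actually needed, and your direct argument is cleaner.
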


\begin{proof}
By permuting rows and columns, we may assume that, for some $(k,\ell)$, $[0,k-1]\times [0,\ell-1]\subseteq D$, that the first $k$ rows and the first $\ell$ columns each contain exactly $b$ elements of $D$, and that the remaining rows and columns contain at least $b+1$ elements of $D$ each.

To see that $D$ is a $(2b+1)$-dominating set in $R_{n}$, consider an arbitrary vertex $(i,j)\in R_{n}\setminus D$.
If $i\le k-1$, then $j\ge \ell$, and vertex $(i,j)$ has $b$ row neighbors in $D$ and $b+1$ column neighbors in $D$; hence, it has $2b+1$ neighbors in $D$.
The argument is similar if $j\le \ell-1$.
Suppose now that $i\ge k$ and $j\ge \ell$.
Then, $(i,j)$ has $b+1$ row neighbors in $D$, as well as $b+1$ column neighbors in $D$, for a total of $2b+2$ neighbors in $D$.
\end{proof}

\begin{lemma}\label{lemma-odd-a-upper-bound}
If $a=2b+1$, for some integer $b\in [0, \lfloor (n-1)/2\rfloor]$,
then it holds that 
\[\gamma\le (b+1)n-b\,.\]
\end{lemma}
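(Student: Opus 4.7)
The plan is to apply Lemma~\ref{lemma-domination-check} to an explicit construction; the only work is deciding which rows and columns should be ``short'' (exactly $b$ elements) versus ``tall'' ($\ge b+1$ elements), then filling accordingly.

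The guiding heuristic comes from a lower-bound count. If $k$ rows are short and the remaining $n-k$ rows are tall, then the total size is at least $kb + (n-k)(b+1) = nb + n - k$. Since our target bound is $(b+1)n-b=nb+n-b$, this suggests we take exactly $k=b$ short rows, and analogously $\ell=b$ short columns. By Lemma~\ref{lemma-domination-check}, the $b\times b$ block at the intersection of the short rows and short columns must lie entirely in $D$; this already consumes the full quota of $b$ elements in each short row (and each short column), so in the short rows we place nothing in the tall columns, and symmetrically nothing is placed at the intersection of tall rows with short columns.

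Concretely, I will partition $[0,n-1]=S_r\cup T_r$ with $|S_r|=b$, $|T_r|=n-b$, and likewise $[0,n-1]=S_c\cup T_c$. The construction is
\[
D = (S_r\times S_c)\cup D',
\]
where $D'\subseteq T_r\times T_c$ is a set having exactly $b+1$ elements in each row of $T_r$ and each column of $T_c$. The existence of $D'$ follows from Lemma~\ref{lemma-ab-fill} applied to the $(n-b)\times(n-b)$ subgrid with $a$ (in the notation of that lemma) equal to $b+1$; the hypothesis $n-b\ge b+1$ needed there is precisely the standing assumption $a=2b+1\le n$. The size of $D$ is then
\[
|D| = b^2 + (n-b)(b+1) = (b+1)n-b,
\]
matching the claimed bound.

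To conclude, I verify the hypotheses of Lemma~\ref{lemma-domination-check}: every short row contains exactly $b$ elements of $D$ and every tall row contains exactly $b+1$, so every row has at least $b$ elements (and similarly for columns). Moreover, the rows of $D$ with \emph{exactly} $b$ elements are precisely the rows in $S_r$, and similarly for columns. Hence any $(i,j)$ for which both row $i$ and column $j$ have exactly $b$ elements satisfies $(i,j)\in S_r\times S_c\subseteq D$. Lemma~\ref{lemma-domination-check} then yields that $D$ is $(2b+1)$-dominating, so $\gamma\le(b+1)n-b$. I expect no real obstacle beyond pinpointing the correct ``short versus tall'' split; once that is motivated by the count above, the verification is immediate from the two preceding lemmas.
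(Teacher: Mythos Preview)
Your proof is correct and essentially identical to the paper's: both take $D=(B\times B)\cup C$ with $B$ the $b$ ``short'' indices and $C\subseteq\overline B\times\overline B$ a $(b+1)$-regular configuration supplied by Lemma~\ref{lemma-ab-fill}, then invoke Lemma~\ref{lemma-domination-check}. The only difference is cosmetic---you allow arbitrary $S_r,S_c$ of size $b$ rather than $[0,b-1]$, and you include the motivating count---but the construction and verification are the same.
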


\begin{proof}
We construct a $(2b+1)$-dominating set $D$ of $K_n\Box K_n$ with cardinality $(b+1)n-b$ as follows.
Let $B = [0,b-1]$ and $\overline{B} = [0,n-1]\setminus B$.
Let $D = (B\times B)\cup C$, where $C$ is an arbitrary subset of the set $\overline{B} \times \overline{B} $ such that each row and column of $\overline{B} \times \overline{B} $ contains precisely $b+1$ elements of $C$.
Note that such a set exists by Lemma~\ref{lemma-ab-fill}, 
since $b+1\le n-b = |\overline{B}|$.
By Lemma~\ref{lemma-domination-check}, $D$ is a $(2b+1)$-dominating set.
Since $|D| = b^2+(b+1)(n-b) = (b+1)n-b$, we obtain that $\gamma\le (b+1)n-b$, as required.
\end{proof}

We next provide a matching lower bound to the upper bound given by Lemma~\ref{lemma-odd-a-upper-bound}.

\begin{lemma}\label{lemma-odd-a-lower-bound}
If $a=2b+1$, for some integer 
$b\in [0, \lfloor (n-1)/2\rfloor]$,
then it holds that
\[\gamma \ge (b+1)n-b\,.\]
\end{lemma}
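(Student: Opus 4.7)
The plan is to sharpen the Cauchy--Schwarz estimate of Lemma~\ref{lemma-boot-alla-lb} by exploiting a structural observation available only because $a = 2b+1$ is odd.

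Adopt the $\{0,1\}$-matrix formulation of that lemma: let $x_{ij}$ indicate occupancy in a minimum $\cZ$-dominating set, set $r_i = \sum_j x_{ij}$, $c_j = \sum_i x_{ij}$, $\sigma = \sum_{i,j} x_{ij}$, and recall that every zero cell $(i,j)$ satisfies $r_i + c_j \ge a$. Partition rows into $S_R = \{i : r_i \le b\}$ and $L_R = \{i : r_i \ge b+1\}$, and columns analogously into $S_C, L_C$. If $S_R = \emptyset$ or $S_C = \emptyset$ then $\sigma \ge (b+1)n$, so assume both are nonempty. The decisive structural point is that for $(i,j) \in S_R \times S_C$ we have $r_i + c_j \le 2b < a$, so such a cell must be occupied; thus $S_R \times S_C$ is fully occupied, which forces $r_i \ge |S_C|$ for every $i \in S_R$ and, symmetrically, $c_j \ge |S_R|$ for every $j \in S_C$. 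In particular $r^* := \min_i r_i \ge |S_C|$ and $c^* := \min_j c_j \ge |S_R|$, with both $r^*, c^* \le b$. By the symmetry of $T_a$ together with Lemma~\ref{lemma-swap}, I may assume $r^* \le c^*$.

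Fix a row $i_0$ with $r_{i_0} = r^*$. Each of its $n - r^*$ zero cells sits in a column $j$ with $c_j \ge a - r^* = 2b+1-r^*$, so at most $r^*$ columns satisfy $c_j < 2b+1-r^*$, and even those satisfy $c_j \ge c^* \ge r^*$. If $c^* \ge 2b+1-r^*$, then every column meets this threshold and $\sigma \ge n(2b+1-r^*) \ge (b+1)n$; otherwise summing column counts gives
\[
  \sigma \;=\; \sum_j c_j \;\ge\; (n - r^*)(2b+1-r^*) + r^* \cdot r^* .
\]

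Direct expansion shows this right-hand side equals
$(b+1)n - b + 2(r^* - b)\bigl(r^* - (n+1)/2\bigr)$,
in which both correction factors are nonpositive under our assumptions ($r^* \le b$, and $b \le (n+1)/2$ from $n \ge a = 2b+1$), so $\sigma \ge (b+1)n - b$, as required. The main obstacle is arranging the bookkeeping so that the resulting quadratic in $r^*$ factors with roots exactly at $b$ and $(n+1)/2$: the trivial bound $c^* \ge 1$ gives a quadratic that fails at $r^* = b$, so the structural inequality $c^* \ge r^*$ is essential, and the column enumeration must use the tight threshold $2b+1-r^*$ (rather than the weaker $b+1$) to capture the extremal configuration $\sigma = (b+1)n - b$ achieved when $r^* = c^* = b$.
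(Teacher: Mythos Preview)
Your argument is correct, and it takes a genuinely different route from the paper's. The paper argues by contradiction: assuming $|D|<(b+1)n-b$, it first runs an inductive ``thickness'' argument to show that every row and every column contains at least $b$ points, then shows there are at least $b+1$ light rows (rows with exactly $b$ points), and finally uses one such row to force every column count up to $b+1$, giving $|D|\ge (b+1)n$. Your approach is more direct: you bound $\sigma$ from below by summing column counts against a single minimum row and observe that the resulting quadratic in $r^*$ factors as $(b+1)n-b+2(r^*-b)\bigl(r^*-(n+1)/2\bigr)$, with both correction factors nonpositive. This bypasses the thickness induction entirely and yields the bound in one stroke; the paper's approach, in exchange, extracts more structural information about near-extremal configurations (thickness, many light rows).

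One small remark: the ``decisive structural point'' about $S_R\times S_C$ being fully occupied, and the consequences $r^*\ge|S_C|$, $c^*\ge|S_R|$, are never actually used in your bound. All you invoke downstream is the symmetry assumption $r^*\le c^*$ (so that the at-most-$r^*$ bad columns still satisfy $c_j\ge c^*\ge r^*$) together with $r^*\le b$. The block observation is a nice piece of motivation, but the proof stands without it.
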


\begin{proof}
Let $D$ be a $(2b+1)$-dominating set of $K_n\Box K_n$.
We show that $|D|\ge (b+1)n-b$.

Suppose for a contradiction that $|D| < (b+1)n-b$.
For $i\in [0,n-1]$, let $d_i$ be the number of vertices in $D$ in row $i$ (i.e., with first coordinate $i$), and let $\ell = \min_{1\le i\le n} d_i$.

We prove the lemma via a sequence of claims established in steps \eqref{l<=b}--\eqref{b+1-column} below.

\sta{$\ell \le b$. \label{l<=b}}
If $\ell\ge b+1$, then $|D|\ge (b+1)n$, a contradiction.
This proves \eqref{l<=b}.

\sta{$\ell = b$. \label{l=b}}
Call a subset of $R_n$ \emph{$k$-thick} if every row and every column contains at least $k$ points of the set. 
Suppose we know, for some integer $k\ge 0$, that $D$ is $k$-thick. 
Suppose also that 
$k^2+(n-k)(a-k)\ge bn+n-b$. Then we claim that $D$ is also $(k+1)$-thick. Assume not. Then we may without loss of generality assume that 
the first row (i.e., row 0) contains exactly $k$ points of $D$ in the leftmost $k$ positions. 
Since $D$ is $k$-thick, each of the leftmost $k$ columns contains at least $k$ points of $D$. As $D$ is $a$-dominating, each of the other 
$n-k$ columns contains at least $a-k$ points of $D$. Therefore, 
$|D|\ge k^2+(n-k)(a-k)\ge bn+n-b$, which contradicts our assumption on the size 
of $D$. 

It remains to show that, 
for $2b+1\le n$ and $0\le k\le b-1$,
$$
 k^2+(n-k)(2b+1-k)\ge bn+n-b.
$$
In fact the above inequality holds for $k\in[0,b]$. 
To see this, observe that the derivative of the left-hand side with respect to $k$ is $4k-(n+2b+1)\le 2b-1-n<0$, and that at $k=b$ the two sides are equal.
This shows that, since $D$ is trivially $0$-thick, $D$ is also $b$-thick, concluding the proof of \eqref{l=b}.

Let us call a row index $i\in [0,n-1]$ \emph{light} if $D$ contains exactly $b$ vertices from the $i$-th row.
Let $I$ be the set of all light row indices.

\sta{$|I|\ge b+1$.\label{b+1-light-rows}}
Suppose for a contradiction that $|I|\le b$.
Summing up the elements of $D$ row by row, we obtain $|D| \ge |I|\cdot b + (n-|I|)\cdot (b+1)\ge (b+1)n-b$, a contradiction.
This proves \eqref{b+1-light-rows}.

By \eqref{b+1-light-rows} and by permuting the rows if necessary, we may assume without loss of generality that $[0,b]\subseteq I$.

\sta{$D$ contains at least $b+1$ vertices from each column.\label{b+1-column}}
Consider a column index $j\in [0,n-1]$. 
If its first $b+1$ vertices belong to $D$, that is, $(i,j)\in D$ for all $i\in [0,b]$, then we are done.
So we may assume that this is not the case, that is, there exists a row index $i\in [0,b]$ such that $(i,j)\not\in D$.
Note that $i\in I$, since $[0,b]\subseteq I$.
Hence, the vertex $(i,j)$ has exactly $b$ row neighbors in $D$.
Since $D$ is a $(2b+1)$-dominating set in $K_n\Box B_n$, we infer that the vertex $(i,j)$ has at least $b+1$ column neighbors in $D$.
This proves \eqref{b+1-column}.

Summing up the elements of $D$ column by column, we obtain using \eqref{b+1-column} that $|D| \ge (b+1)\cdot n$, a contradiction.
\end{proof}

\begin{proof}[Proof of Theorem~\ref{thm-boot} for odd $a\le n$]
This follows immediately from Lemmas~\ref{lemma-odd-a-upper-bound} 
and~\ref{lemma-odd-a-lower-bound}. 
\end{proof}

\subsection{Odd $a>n$}\label{subsec-odd-a>n}
Observe that  (\ref{eq-boot-odd-a>n-simpl}) can be rewritten as
\begin{equation}\label{eq-boot-odd-a>n}
\gamma=\frac{a-1}{2}\, n+\left\lceil\frac n2\right\rceil+
\left\lceil\frac{n}{2(2n-a)}-
\frac 12\cdot 
\mathbbm 1_{\text{$n$ odd}}\right\rceil,
\end{equation}
and we call the third term the \emph{correction term}.
The next lemma provides a lower bound, which turns out to be sharp in this case (as well as in the case $a = n$), but not for $a<n$ (where the approach in the proof of Lemma~\ref{lemma-odd-a-lower-bound} is superior). 
For the upper bound, 
we use a block construction, which is similar to the one in 
Lemma~\ref{lemma-odd-a-upper-bound}, but a bit more
involved. 

\begin{lemma}\label{lemma-a>n-lower-bound} 
For $s\in \bZ$, let 
$$\nu(s) = -2n\cdot\lfloor s/n\rfloor^2 +2(2s-n)\cdot \lfloor s/n\rfloor+an^2+(2-a-2n).
$$
Then, for any $a\le 2n-2$, 
\begin{equation}\label{eq-a>n-lb}
\begin{aligned}
\gamma \ge  \min\{
&s\in \left[\lceil an/2\rceil, \lceil a/2\rceil n\right]\colon \nu(s)\le 0\}.
\end{aligned}
\end{equation}
\end{lemma}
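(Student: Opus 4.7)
The plan is to refine the counting argument of Lemma~\ref{lemma-boot-alla-lb} by replacing its Cauchy--Schwarz step with an integer-valued convexity bound that is sharp for small row/column counts.  Let $D$ be an $a$-dominating set of $K_n\Box K_n$ of cardinality $\sigma = |D|$, write $x_{ij}\in\{0,1\}$ for its indicator, and set $r_i = \sum_j x_{ij}$, $c_j = \sum_i x_{ij}$, and $q = \lfloor\sigma/n\rfloor$.  Summing the pointwise inequality $(1-x_{ij})(r_i+c_j-a)\ge 0$ over $(i,j)\in R_n$ and expanding exactly as in (\ref{eq-even-a-3})--(\ref{eq-even-a-4}) yields the key estimate
\[
\sum_i r_i^2 + \sum_j c_j^2 \;\le\; (2n+a)\sigma - an^2.
\]

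The next step replaces the Cauchy--Schwarz bound $\sum_i r_i^2\ge\sigma^2/n$ by the integer inequality $(r-q-1)(r-q)\ge 0$, valid for every integer $r$, which rearranges to $r^2 \ge (2q+1)r - q(q+1)$.  Summing over $i$ gives $\sum_i r_i^2 \ge (2q+1)\sigma - nq(q+1)$, and the analogous bound holds for $\sum_j c_j^2$.  Substituting both into the previous display, clearing through, and collecting powers of $\sigma$ (and of $q$) reassembles everything into a single polynomial condition that matches the function $\nu(\sigma)$ of the lemma, giving $\nu(\sigma)\le 0$.

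It remains to verify that $\sigma$ lies in the stated search interval.  The lower bound $\sigma\ge \lceil an/2\rceil$ is immediate from Lemma~\ref{lemma-boot-alla-lb}.  For the upper bound, the hypothesis $a\le 2n-2$ ensures $\lceil a/2\rceil\le n-1$, so Lemma~\ref{lemma-ab-fill} applied with parameter $\lceil a/2\rceil$ (and $m=n$) produces a set with exactly $\lceil a/2\rceil$ elements in each row and column of $R_n$; every point outside this set sees at least $2\lceil a/2\rceil\ge a$ occupied neighbors, so the set is $a$-dominating and $\gamma\le \lceil a/2\rceil n$.  Combining, $\gamma=\sigma\in[\lceil an/2\rceil,\lceil a/2\rceil n]$ and $\nu(\gamma)\le 0$, yielding (\ref{eq-a>n-lb}).

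The main subtlety is pinning down the tightest integer-convexity bound: choosing $M=q+1$ rather than $\lfloor\sigma/n\rfloor$ (or the non-integer $\sigma/n$ supplied by Cauchy--Schwarz) is what is needed to recover the precise polynomial $\nu$, and the subsequent algebraic rearrangement must be executed with care so that the constant, linear, and quadratic terms in $q$ and $\sigma$ combine correctly.  The derivation itself is oblivious to the parity of $a$ and to whether $a>n$, which is consistent with the remark in the text that the bound holds for every $a\le 2n-2$ but becomes tight only in the odd $a\ge n$ regime.
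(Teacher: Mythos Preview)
Your argument is correct and follows essentially the same route as the paper. The paper phrases the integer-convexity step as an explicit computation of $\mu(s)=\min\{\sum r_i^2:\sum r_i=s,\ r_i\in\bZ_+\}$, observing that the minimizers take only the values $\lfloor s/n\rfloor$ and $\lfloor s/n\rfloor+1$; your inequality $(r-q)(r-q-1)\ge 0$ is exactly the pointwise version of that observation and yields the identical bound $\sum r_i^2\ge (2q+1)\sigma-nq(q+1)=\mu(\sigma)$. For the upper endpoint $\gamma\le\lceil a/2\rceil n$ the paper cites \cite{BLL}, whereas you supply the construction directly from Lemma~\ref{lemma-ab-fill}; the paper in fact uses the same construction earlier (proof of Theorem~\ref{thm-boot} for even $a$), so there is no substantive difference.
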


\begin{proof}
For a fixed integer $n\ge 1$ and integer $s\ge 0$ we define  
$$\mu(s)=\min\left\{r_1^2+\cdots+r_n^2\colon r_1,\ldots, r_n\in \bZ_+, \sum_{i=1}^n r_i=s\right\}.
$$
We first claim that the numbers $r_i$ at which the minimum is achieved 
must either be all equal or else have two consecutive values.
Indeed, if, say, $r_2\ge r_1+2$, then  
$$
(r_1+1)^2+(r_2-1)^2=r_1^2+r_2^2+2(1-r_2+r_1)<r_1^2+r_2^2
$$
and so replacing $r_1$ and $r_2$ with $(r_1+1)$ and $(r_2-1)$, respectively, decreases the sum of squares. 
It follows that, with $r=\lfloor s/n\rfloor$ and 
$t=s-rn$, 
$$
\mu(s)=(n-t)r^2+t(r+1)^2=nr^2+t(2r+1).
$$
    From the proof of Lemma~\ref{lemma-boot-alla-lb}, 
    we get that any feasible solution $(x_{ij})$, with $\sigma=\sum_{ij}x_{ij}$, must satisfy
$$
2\mu(\sigma)\le -an^2+(a+2n)\sigma.
$$
Furthermore, for the optimal solution, $\sigma\ge \lceil an/2\rceil$
(Lemma~\ref{lemma-boot-alla-lb}) and 
$\sigma\le \lceil a/2\rceil n$ (Corollaries 11 and 12 in \cite{BLL}). Simple algebra 
yields (\ref{eq-a>n-lb}).
\end{proof}

\begin{proof}[Proof of Theorem~\ref{thm-boot} when $a>n$, $a$ is odd, and $n$ is even] 
Write $a=2b+1$ and $n=2m$. 
We first design a dominating 
set with its size given by (\ref{eq-boot-odd-a>n}). 
In this proof rows are again counted starting from the top and 
columns starting from the left and the dimensions of a block 
are given as $(\text{number of rows})\times (\text{number of columns})$.
The $a$-dominating set will be constructed as a block matrix with four blocks (where as usual, vertices in the set are encoded by $1$s): 
$$
B=\begin{bmatrix}
 B_{11}^{(m-k)\times (m-k)}
 &B_{12}^{(m-k)\times (m+k)}\\
 B_{21}^{(m+k)\times (m-k)}
 &B_{22}^{(m+k)\times (m+k)}
\end{bmatrix}.
$$
The dimensions depend on the number $k$, which will be specified below.
The matrix $B$ will be symmetric, and will have $b$ 1s in each of its first $m-k$ rows and $b+1$ 1s in each of its remaining $m+k$ rows. 
Therefore, the number of 1s is $2mb+m+k$, so $k$ will be exactly the correction term in (\ref{eq-boot-odd-a>n}). 
All entries of $B_{11}$ are set to $1$. 
Note that $B$ determines an $a$-dominating set by Lemma~\ref{lemma-domination-check}. 
 
We {\it start\/} by making all entries in $B_{22}$ $1$ as well, but this will later change. 
We now concentrate on $B_{12}$, so all rows and columns are of that block only, unless otherwise specified. 
The goal is to define nonnegative integers $c$ and $d$, and design a configuration in $B_{12}$ which has $d$ 1s in every row and either $c$ or $c+1$ 1s in every column; we now decide what $c$ and $d$ must be. 
First, $c$ is the number that, together with the $m+k$ 1s in the same column of $B_{22}$ makes for $b+1$ $1$s, that is, $c=b-m-k+1$. 
(We will choose $k$ so that $k\le b-m+1$; this will ensure that $c$ is nonnegative.)
Furthermore, 
$d$ is the number that, together with the $m-k$ 1s in the same row
of $B_{11}$ makes for $b$ $1$s, that is, $d=b-m+k$. In order 
for this construction to be possible, 
\begin{equation}\label{eq-consistency}
    (m-k)d\ge (m+k)c, 
\end{equation}
which works out to be 
$$
m(4k-1)\ge (2b+1)k=ak, 
$$
and so $k$ needs to be at least the correction term in 
(\ref{eq-boot-odd-a>n}).

We next show that the claimed construction in $B_{12}$ is 
possible if $k$ is the smallest positive integer such that (\ref{eq-consistency}) holds. 
We first claim that we then have 
\begin{equation}\label{eq-consistency-1}
    (m-k)d\le  (m+k)(c+1). 
\end{equation}
If $k=1$, $d=c+1$ and the claim holds. Assume now that $k\ge 2$. 
In this case, we know that (\ref{eq-consistency}) does not hold if we replace 
$k$ by $k-1$, which yields
$$
b>\frac{4k-3}{2(k-1)}m-\frac 12.
$$
Equation (\ref{eq-consistency-1}) is equivalent to 
$$
b\ge \frac{2k-1}{k}m-1,
$$
so it holds if
$$
\frac{4k-5}{2(k-1)}\ge \frac{2k-1}{k},
$$
which holds when $k\ge 2$, establishing (\ref{eq-consistency-1}). 

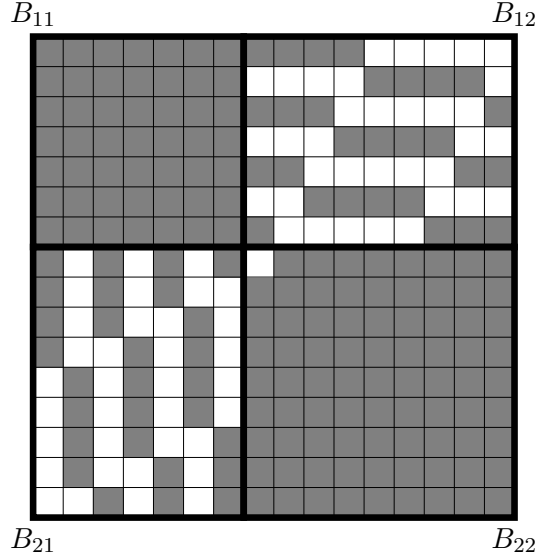
\begin{figure}[ht!]
\begin{center}
\definecolor{dgrey}{rgb}{0.5, 0.5, 0.5}
\begin{tikzpicture}
\def\u{0.4}
\def\m{8}
\def\d{4}
\pgfmathsetmacro\n{{2*\m}}

\fill [dgrey] 
      (0,{(\m+1)*\u}) rectangle   ({(\m-1)*\u},{(\n*\u});
\fill [dgrey] 
       ({(\m-1)*\u},0) rectangle   ({\n*\u},{((\m+1)*\u});
\fill [white] 
       ({(\m-1)*\u},{\m*\u}) rectangle  ({(\m-1)*\u+\u},{\m*\u+\u});

\foreach \i in {0,1,2,3}
\fill [dgrey] 
       ({(\m-1)*\u+\i*\u},{(\n-1)*\u}) rectangle  ({(\m-1)*\u+\i*\u+\u},{(\n-1)*\u+\u});

\foreach \i in {4,5,6,7}
\fill [dgrey] 
       ({(\m-1)*\u+\i*\u},{(\n-1)*\u-\u}) rectangle  ({(\m-1)*\u+\i*\u+\u},{(\n-1)*\u-\u+\u});
       
\foreach \i in {8,0,1,2}
\fill [dgrey] 
       ({(\m-1)*\u+\i*\u},{(\n-1)*\u-2*\u}) rectangle  ({(\m-1)*\u+\i*\u+\u},{(\n-1)*\u-2*\u+\u});

\foreach \i in {3,4,5,6}
\fill [dgrey] 
       ({(\m-1)*\u+\i*\u},{(\n-1)*\u-3*\u}) rectangle  ({(\m-1)*\u+\i*\u+\u},{(\n-1)*\u-3*\u+\u});

\foreach \i in {7,8,0,1}
\fill [dgrey] 
       ({(\m-1)*\u+\i*\u},{(\n-1)*\u-4*\u}) rectangle  ({(\m-1)*\u+\i*\u+\u},{(\n-1)*\u-4*\u+\u});

\foreach \i in {2,3,4,5}
\fill [dgrey] 
       ({(\m-1)*\u+\i*\u},{(\n-1)*\u-5*\u}) rectangle  ({(\m-1)*\u+\i*\u+\u},{(\n-1)*\u-5*\u+\u});
       
 \foreach \i in {6,7,8,0}
\fill [dgrey] 
       ({(\m-1)*\u+\i*\u},{(\n-1)*\u-6*\u}) rectangle  ({(\m-1)*\u+\i*\u+\u},{(\n-1)*\u-6*\u+\u});

\foreach \i in {0,1,2,3}
\fill [dgrey] 
       (0,{(\m)*\u-\i*\u}) rectangle   ({0+\u},{(\m)*\u-\i*\u+\u});

\foreach \i in {4,5,6,7}
\fill [dgrey] 
        ({0+\u},{(\m)*\u-\i*\u}) rectangle   ({0+\u+\u},{(\m)*\u-\i*\u+\u});
       
\foreach \i in {8,0,1,2}
\fill [dgrey] 
        ({0+2*\u},{(\m)*\u-\i*\u}) rectangle   ({0+\u+2*\u},{(\m)*\u-\i*\u+\u});

\foreach \i in {3,4,5,6}
\fill [dgrey] 
        ({0+3*\u},{(\m)*\u-\i*\u}) rectangle   ({0+\u+3*\u},{(\m)*\u-\i*\u+\u});
        
\foreach \i in {7,8,0,1}
\fill [dgrey] 
           ({0+4*\u},{(\m)*\u-\i*\u}) rectangle   ({0+\u+4*\u},{(\m)*\u-\i*\u+\u});

\foreach \i in {2,3,4,5}
\fill [dgrey] 
           ({0+5*\u},{(\m)*\u-\i*\u}) rectangle   ({0+\u+5*\u},{(\m)*\u-\i*\u+\u});
       
 \foreach \i in {6,7,8,0}
\fill [dgrey] 
           ({0+6*\u},{(\m)*\u-\i*\u}) rectangle   ({0+\u+6*\u},{(\m)*\u-\i*\u+\u});

\foreach \i in {0,...,\n}
    {\draw [black, line width=0.1] (\u*\i,0)--(\u*\i,{\u*\n});
    \draw [black, line width=0.1] (0,\u*\i)--({\u*\n}, \u*\i);}
\draw [black, line width=2.5] 
      (0,0) rectangle   ({(\m-1)*\u},{(\m+1)*\u});
\draw [black, line width=2.5] 
      (0,{(\m+1)*\u}) rectangle   ({(\m-1)*\u},{(\n*\u});
\draw [black, line width=2.5] 
     ({(\m-1)*\u},{(\m+1)*\u}) rectangle   ({\n*\u},{(\n*\u});
\draw [black, line width=2.5] 
     ({(\m-1)*\u},0) rectangle   ({\n*\u},{((\m+1)*\u});
\node [below] at ({\n*\u},0) {$B_{22}$}; 
\node [below] at ({0},0) {$B_{21}$};   
\node [above] at ({\n*\u}, {\n*\u}) {$B_{12}$}; 
\node [above] at ({0}, {\n*\u}) {$B_{11}$};     

\end{tikzpicture}
\end{center}
\caption{An example of the construction of $B$ for 
$m=8$, $k=1$, $c=3$, $d=4$, which correspond to $b=11$ and $a=23$, the largest one with $m = 8$ for which $k=1$. The first column of $B_{12}$ is the only one with $4$ 1s, so the corresponding diagonal element of 
$B_{22}$ switches to 0. Points in the $a$-dominating set are represented by dark squares.}
\label{fig:domination}
\end{figure}

The next step is to 
use Lemma~\ref{lemma-ab-fill} (with $a=d$, $m = m-k$, and $n = m+k$), which provides 
a configuration in $B_{12}$ with the required row sums, and such that
each column sum is between $\lfloor (m-k)d/(m+k)\rfloor\ge c$ 
(by (\ref{eq-consistency})) and $\lceil (m-k)d/(m+k)\rceil\le c+1$
(by (\ref{eq-consistency-1})). 
Note that this last inequality implies that $c\ge 0$, as promised.
Recall also that, since $B$ is symmetric, $B_{21}$ is uniquely determined by $B_{12}$.

In the final step, we eliminate some 1s from $B_{22}$. 
Namely, for every column $j$ of $B$ which has $(c+1)$ 1s in $B_{12}$, the row $j$ of $B$ also has $(c+1)$ 1s in $B_{21}$. 
In this case, we replace the $1$ at the diagonal position $jj$ of $B$ (which must fall in $B_{22}$) by 0. 
It is clear from the construction that we have produced  an $a$-dominating set with claimed row and 
column sums. 
See Figure~\ref{fig:domination} for an example.

We have therefore established the $\le$ part in (\ref{eq-boot-odd-a>n}). 
In particular, this implies that $\gamma<n(b+1)$. 
We now proceed to verify the $\ge$ part. 
Recall the expression $\nu(s)$ from  Lemma~\ref{lemma-a>n-lower-bound}.
Note that in our case, the interval $\left[\lceil an/2\rceil, \lceil a/2\rceil n\right]$ from the lower bound given by  Lemma~\ref{lemma-a>n-lower-bound} equals to the interval $[bn+n/2, bn+n]$.
However, since $\gamma<n(b+1)$, the right endpoint of the interval can be safely excluded from consideration.
We next observe that $\nu$ is decreasing for $s\in [bn+n/2, bn+n-1]$. 
Indeed, on this interval $\lfloor s/n\rfloor=b$ and so 
$\nu$ is a linear function with coefficient 
$4b+2-a-2n=a-2n<0$. 
We therefore only need to check that $\nu(bn+n/2+k-1)>0$. 
After 
some algebra, we get 
\begin{equation*}
\nu(bn+n/2+k-1)=\frac52 n-a-(2n-a)k=(2n-a)\left(1+\frac{n}{2(2n-a)}-k\right)>0,
\end{equation*}
which establishes (\ref{eq-boot-odd-a>n}).
\end{proof}

\begin{proof}[Proof of Theorem~\ref{thm-boot} when $a>n$, and both $a$ and $n$ are odd] 
Now write $a=2b+1$ and $n=2m+1$. The argument is quite 
similar to the previous case, but differs in the details, so we go through it again. Now 
$$
B=\begin{bmatrix}
 B_{11}^{(m-k)\times (m-k)}
 &B_{12}^{(m-k)\times (m+k+1)}\\
 B_{21}^{(m+k+1)\times (m-k)}
 &B_{22}^{(m+k+1)\times (m+k+1)}
\end{bmatrix},
$$
and $B$ is symmetric, all entries of $B_{11}$ are $1$, the first $m-k$ rows each have $b$ 1s, and remaining $m+k+1$ rows each have $b+1$ 1s. 
Thus, the number of 1s is $(2m+1)b+m+1+k$, so that again
$k$ is the correction term. Domination again follows from 
Lemma~\ref{lemma-domination-check}. Further, now $c=b-m-k$, $d=b-m+k$, and the consistency requirement is
\begin{equation}\label{eq-consistency-odd}
    (m-k)d\ge (m+k+1)c, 
\end{equation}
which is equivalent to
$$
a(2k+1)\le n(4k+1), 
$$
and again we can solve the inequality for $k$ to get the correction term as the smallest $k$ that satisfies it. The construction is concluded as 
for even $n$, provided 
\begin{equation}\label{eq-consistency-2}
   (m-k)d\le (m+k+1)(c+1), 
\end{equation}
which is equivalent to 
$$
b\ge \frac{2k(n-1)}{2k+1}-\frac{1}{2k+1}. 
$$
By minimality of $k$, 
$$
(2b+1)(2k-1)>n(4k-3),
$$
which gives a lower bound on $b$, 
$$
b>\frac{4k-3}{2(2k-1)}n -\frac12, 
$$
which, after a bit of algebra, shows that (\ref{eq-consistency-2}) is satisfied if $k\ge 2$. 
We therefore need to address the case $k=1$ separately.
First, note that, since $d = c+2k$, for $k = 1$ the inequality  (\ref{eq-consistency-2}) is satisfied if we replace 
the factor $(c+1)$ by $(c+2)$. 
It follows that each column has at most 
$c+2$ 1s, and by construction in Lemma~\ref{lemma-ab-fill}, when a column 
does have $c+2$ 1s, all columns have at least $c+1$ 1s. 
Suppose first that there is a single column (of $B_{12}$) with $c+2$ 1s.
Then, we may assume it is the first one and we make the $2\times 2$ block in the upper left corner of $B_{22}$ to be 
$$
\begin{matrix}
    0 & 0\\
    0 & 1
\end{matrix}
$$
and put 0s on all diagonals of $B_{22}$ outside of this block. 
Suppose now that the number of columns with $c+2$ 1s is $\ell\ge 2$. 
Then, the $\ell\times \ell$ block in the upper corner of $B_{22}$ is a matrix with exactly $\ell-2$ 1s in its every row and column.
Such a matrix is the $2\times 2$ matrix with all entries equal to $0$ if $\ell = 2$, while for $\ell \ge 3$, we can take the matrix of $1$s minus the adjacency matrix of the cycle graph on $\ell$ vertices.
The diagonals of $B_{22}$ outside of this block are switched to $0$. 
This again results in the matrix $B$ with required 
row and column counts. 

Again, it remains to verify the $\ge$ part in (\ref{eq-boot-odd-a>n}). 
Exactly as for even $n$, we verify that $\nu$ from 
Lemma~\ref{lemma-a>n-lower-bound} is decreasing on the same
interval so that we only need to check that \hbox{$\nu(bn+(n+1)/2+k-1)>0$}. 
Now this yields
$$
\nu(bn+(n+1)/2+k-1)=\frac 32n-\frac12 a-(2n-a)k=
(2n-a)\left(\frac12+\frac{n}{2(2n-a)}-k\right)>0,
$$
finishing the proof of (\ref{eq-boot-odd-a>n}) for odd $n$. 
\end{proof}

\section{A $3$-approximation to Young domination}\label{sec-3-approx}

The first step in our proof of Theorem~\ref{thm-3-approx} is to compare $\gamma^L$  to one of its enhanced
versions; another version is used in Section~\ref{sec-C-approx}. In this comparison, we convert points in the initial 
set $A$ used in the standard dynamics into enhancements as indicated 
by Lemma~\ref{lemma-reg-enh}. The following definition accounts for the fact that the resulting enhanced dynamics may not cover points in~$A$. 

\begin{equation*}
\begin{aligned}
\gammap^L=\gammap^L(\Z,m,n)=\min\{|\vec r|+|\vec c|+|R_{n,m}\setminus \cY|\colon\
&\vec r\in \bZ_+^m, \vec c\in \bZ_+^n\text{ are nonincreasing,}\\
&\cY\subseteq R_{n,m}\text{ is a Young diagram, and,}\\
&\text{ for $\cT$ given by }(\Z,\vec r, \vec c),\text{ } \cT^L(\emptyset)=\cY\}.
\end{aligned}
\end{equation*}
As usual, we denote $\gammap^1$ simply by $\gammap$.

The inequalities in the next lemma follow from Lemma~\ref{lemma-reg-enh} and a repeated application of Lemma~\ref{lemma-enh-reg}.

\begin{lemma}\label{lem:approx-gamma} For any zero-set $\Z$ and positive integers $m,n$,
\[
\frac{1}{3} \,\gammap^L\leq \gamma^L\leq L\,\gammap^L.
\]
\end{lemma}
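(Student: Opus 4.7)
The lemma splits into two inequalities proved by different means. For the lower bound $\gammap^L \le 3\gamma^L$, I start from an optimal initial set $A \subseteq R_{n,m}$ with $|A| = \gamma^L$ and $\cT^L(A) = R_{n,m}$, and define enhancements $r_i := \row_i(A)$ and $c_j := \col_j(A)$. After separately permuting the rows and the columns of $A$ to put $\vec r$ and $\vec c$ in nonincreasing order---which by Lemma~\ref{lemma-swap} preserves $\cT^L(A) = R_{n,m}$ and leaves $|A|$ unchanged---I apply Lemma~\ref{lemma-reg-enh} to conclude $R_{n,m}\setminus \widehat\cT^L(\emptyset)\subseteq A$, whence $|R_{n,m}\setminus\cY| \le |A|$ for $\cY := \widehat\cT^L(\emptyset)$ (a Young diagram by Lemma~\ref{lem-zeroyoung}). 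Since $|\vec r| = |\vec c| = |A|$, the triple $(\vec r, \vec c, \cY)$ is feasible for $\gammap^L$ with total weight at most $3|A| = 3\gamma^L$.

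For the upper bound $\gamma^L \le L\gammap^L$, I take an optimal feasible triple $(\vec r, \vec c, \cY)$ and set $\cY_k := \widehat\cT^k(\emptyset)$, so that $\cY_L = \cY$. For each $k$, fix an enhancement $\widehat{\cY_k}$ of $\cY_k$; by adding at most $r_i$ new sites in each row and $c_j$ new sites in each column, this can be done with $|\widehat{\cY_k}\setminus\cY_k| \le |\vec r| + |\vec c|$. I take as initial set
\[
 A := (R_{n,m}\setminus\cY) \,\cup\, \bigcup_{k=0}^{L-1}\bigl(\widehat{\cY_k}\setminus \cY_k\bigr),
\]
which satisfies $|A| \le L(|\vec r|+|\vec c|) + |R_{n,m}\setminus\cY| \le L\,\gammap^L$. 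It remains to verify $\cT^L(A)=R_{n,m}$, which I prove by induction on $k$ with the hypothesis $\cT^k(A)\supseteq A\cup\cY_k$. For a new point $z=(j,i)\in\cY_{k+1}\setminus(A\cup\cY_k)$, the fact that $z\notin A$ implies $z\notin\widehat{\cY_k}$, while $A\cup\cY_k \supseteq \widehat{\cY_k}$ together with $z\notin\cY_k$ yields the row-count bound $|\cN^r(z)\cap\cT^k(A)| \ge \row_i(\widehat{\cY_k}) \ge |\cN^r(z)\cap\cY_k| + r_i$, and analogously for columns. Since $z\in\cY_{k+1}=\widehat\cT(\cY_k)$ and $\cZ$ is downward closed, the pair of counts for $z$ inside $\cT^k(A)$ lies outside $\cZ$, so $z\in\cT^{k+1}(A)$.

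The main technical point is why the factor $L$ appears in the upper bound. One would like to use the smaller initial set $\widehat\emptyset\cup(R_{n,m}\setminus\cY)$, of size at most $\gammap^L$; however, the enhancement sites inside $\widehat\emptyset$ may overlap with the growing sets $\cY_k$, and after one iteration such overlap prevents the row and column counts from being boosted by the full $r_i$ and $c_j$ in subsequent steps. Installing fresh enhancement layers $\widehat{\cY_k}\setminus\cY_k$ for every $k<L$ enforces exactly the disjointness needed to propagate the enhanced-dynamics inequality through each of the $L$ steps of the standard dynamics, at the price of the multiplicative factor $L$.
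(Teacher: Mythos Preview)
Your proof is correct and follows essentially the same strategy as the paper. The lower bound is identical. For the upper bound, the paper defines the auxiliary sets iteratively as $B_0=\widehat\emptyset$, $B_k=\widehat{\cT(B_{k-1})}$ and invokes Lemma~\ref{lemma-enh-reg} as a black box, whereas you enhance the sets $\cY_k=\widehat\cT^k(\emptyset)$ directly and verify the row/column count inequality inline; both arguments add a fresh enhancement layer at each of the $L$ steps and arrive at the same size bound.
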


\begin{proof}
Fix a smallest set $A\subseteq R_{n,m}$ that occupies $R_{n,m}$ in $L$ steps and order the rows and columns of $R_{n,m}$ so that the row sums $\vec r=(r_0,r_1,\ldots)$ and column sums $\vec c=(c_0,c_1,\ldots)$ of $A$ are non-increasing. 
Starting from $\emptyset$, the dynamics given by $(\Z,\vec r, \vec c)$  occupies in $L$ steps a Young diagram $\cY\subseteq R_{n,m}$ (by Lemma~\ref{lem-zeroyoung}) such that $R_{n,m}\setminus \cY\subseteq A$ (by Lemma~\ref{lemma-reg-enh}).
Then $\gammap^L\le |\vec r|+|\vec c|+|R_{n,m}\setminus \cY|\le 3|A| = 3\gamma^L$, which proves the left inequality.

To prove the right inequality, assume that nonincreasing
$\vec r$, $\vec c$,  with 
$\vec r\in \bZ_+^m, \vec c\in \bZ_+^n$, define, 
together with $\cZ$, the enhanced dynamics $\widehat\cT$, 
and thus also give $\cY=\widehat\cT^L(\emptyset)$. Assume 
also that these enhancements realize $\widehat \gamma^L$. 
Let $\cT$ be the regular 
dynamics given by $\cZ$. 

Recall the enhancement operation defined on sets 
before Lemma~\ref{lemma-enh-reg}. Inductively define the following sets: 
$B_0=\widehat \emptyset$, $B_{k}=\widehat{\cT(B_{k-1})}$, for all $k=1,\ldots, L-1$. We first prove by induction that 
\begin{equation}\label{eq-L1}
 \widehat \cT^k(\emptyset)\subseteq \cT(B_{k-1}),   
\end{equation} 
for $k=1,\ldots, L$. By Lemma~\ref{lemma-enh-reg}, this 
holds for $k=1$. 
Assume now $2\le k\le L$ and that (\ref{eq-L1}) 
holds for $k-1$. Then 
$$
\widehat \cT^k(\emptyset)
=\widehat \cT(\widehat \cT^{k-1}(\emptyset))
\subseteq \widehat\cT( \cT(B_{k-2}))
\subseteq \cT( \widehat{\cT(B_{k-2})})=\cT(B_{k-1}),
$$
where we used, in order, the definition of $\widehat \cT^k$, the induction hypothesis and the monotonicity of $\widehat \cT$, Lemma~\ref{lemma-enh-reg}, and the definition ob $B_k$. This 
establishes (\ref{eq-L1}). 

Define 
$$
A=(R_{n,m}\setminus \cY) \cup B_0\cup \bigcup_{k=1}^{L-1} (B_k\setminus \cT(B_{k-1})) .
$$
We now prove by induction that 
\begin{equation}\label{eq-L2}
\cT(B_{k-1})\subseteq \cT^k(A), 
\end{equation} 
for $k=1,\ldots, L$. 
For $k=1$ this is true as $B_0\subseteq A$ and $\cT$ is monotone. Assume 
now that $k\ge 2$ and that (\ref{eq-L2}) is true for $k-1$, i.e., that
$\cT(B_{k-2})\subseteq \cT^{k-1}(A)$.  As we have 
$B_{k-1}\setminus \cT(B_{k-2})\subseteq A\subseteq \cT^{k-1}(A)$, it 
follows that $B_{k-1}\subseteq \cT^{k-1}(A)$ and so 
$\cT(B_{k-1})\subseteq \cT^{k}(A)$, establishing (\ref{eq-L2}).

It immediately follows from (\ref{eq-L1}) and (\ref{eq-L2}) that 
$
\cY=\widehat \cT^L(\emptyset)\subseteq \cT^L(A). 
$
Clearly, 
$$R_{n,m}\setminus \cY\subseteq A\subseteq \cT^L(A).$$ 
We thus conclude that  $\cT^L(A)=R_{n,m}$ and, hence, $\gamma^L\le |A|$. 
Finally, we estimate the size of $A$.
Observe that $|B_0|\le |\vec r|+|\vec c|$ and 
$|B_k\setminus \cT(B_{k-1})|\le |\vec r|+|\vec c|$
for all $k=1,\ldots L-1$. It follows that 
$$
|A|\le |R_{n,m}\setminus \cY|+L\,(|\vec r|+|\vec c|)
\le L\widehat \gamma^L, 
$$
and therefore, $\gamma^L\le |A|\le  L\,\widehat\gamma^L$, as desired.
 \end{proof}
 
	\begin{theorem}\label{thm-alg-gammap}
There exists an algorithm polynomial in $n$ that takes as input $m,n\in \bN$ with $m\le n$ and a zero-set $\Z$ with $\Z\subseteq R_{n,m}$, and computes the value of $\gammap = \gammap(\Z,m,n)$.
	\end{theorem}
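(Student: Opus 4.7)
The plan is to show that the optimization defining $\gammap(\Z,m,n)$ collapses to a minimum of a simple linear function over the concave corners of $\Z$. Unfolding the $L=1$ dynamics, for any nonincreasing $\vec r \in \bZ_+^m$ and $\vec c \in \bZ_+^n$ we have $\cT(\emptyset) = \{(j,i) \in R_{n,m} : (r_i,c_j) \notin \Z\}$, so
\[
\gammap = \min_{\vec r,\vec c}\left(\sum_i r_i + \sum_j c_j + \bigl|\{(j,i) : (r_i,c_j)\in\Z\}\bigr|\right),
\]
and we may assume throughout that $r_i \le n$ and $c_j \le m$, since larger values do not affect membership in $\Z \subseteq R_{n,m}$.

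The main step I would carry out is to prove that the minimum is attained when both $\vec r$ and $\vec c$ are constant. Fix $\vec c$: the objective becomes $\sum_j c_j + \sum_i h_{\vec c}(r_i)$ with $h_{\vec c}(r) := r + |\{j : (r,c_j)\in\Z\}|$, a sum in which each summand depends on a single coordinate of $\vec r$. Each $r_i$ may therefore be chosen independently, and picking a common minimizer $r^*$ of $h_{\vec c}$ for every $i$ produces a constant, hence nonincreasing, vector attaining the minimum. Symmetrically, with $\vec r$ constant the optimal $\vec c$ is constant, so the search reduces to pairs $(r^*,c^*) \in [0,n]\times[0,m]$ with value
\[
m r^* + n c^* + mn \cdot \mathbf{1}\bigl[(r^*,c^*)\in\Z\bigr].
\]
The points $(n,0)$ and $(0,m)$ lie outside $\Z$ and give value $mn$, so the minimum is attained with $(r^*,c^*)\notin\Z$, yielding $\gammap = \min\{mp + nq : (p,q) \in ([0,n]\times[0,m])\setminus\Z\}$. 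Monotonicity of $mp+nq$ in both coordinates forces this minimum onto the minimal elements of this up-set, which are precisely the concave corners $\mathcal I_\Z$ (all of which lie in $[0,n]\times[0,m]$ because $\Z\subseteq R_{n,m}$).

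The algorithm then computes $\mathcal I_\Z$ from the input description of $\Z$ in time polynomial in $n$, evaluates $mp + nq$ for each $(p,q)\in\mathcal I_\Z$, and returns the smallest value; since $|\mathcal I_\Z|\le \min(m,n)+1$, this last step is fast. The one place requiring care is the coordinate-independence argument, which must respect the nonincreasing constraint on $\vec r$ and $\vec c$, but this is immediate since a common minimizer yields a constant vector. I expect no further obstacle: this approach bypasses dynamic programming entirely in the $L=1$ case, because the single application of $\cT$ decouples the influence of $\vec r$ from that of $\vec c$ on each vertex; at latency $L\ge 2$ the iteration couples the coordinates and more sophisticated techniques would be required.
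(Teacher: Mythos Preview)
Your proof is correct and takes a genuinely different, much simpler route than the paper. The paper reduces to ``tame'' enhancement pairs (each coordinate equal to some concave-corner coordinate), encodes such pairs by multiplicity vectors $(\vec x,\vec y)$, and then runs a dynamic program over $\cO(mn^2p^2)$ states with total running time $\cO(m^2n^3p^2)$. You observe instead that, for $L=1$, the objective $\sum_i r_i+\sum_j c_j+|\{(j,i):(r_i,c_j)\in\Z\}|$ is separable: for fixed $\vec c$ it splits as $\sum_j c_j+\sum_i h_{\vec c}(r_i)$, so each $r_i$ can be set to a common minimizer of $h_{\vec c}$ without violating the nonincreasing constraint, and symmetrically for $\vec c$. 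Two rounds of coordinate minimization thus yield a constant pair $(r^*,c^*)$ attaining the optimum, and the objective collapses to the closed form
\[
\gammap(\Z,m,n)=\min_{(p,q)\in\cI_\Z}\bigl(mp+nq\bigr).
\]
This not only gives a linear-time algorithm (after extracting $\cI_\Z$) but also an explicit formula that the paper does not state. The paper's approach, while heavier, is more robust in spirit: the dynamic program is set up to handle a genuinely nonseparable optimization of the form $\sum a_ix_i+\sum b_{p-j}y_j+\sum_{i<j}x_iy_j$, and one could imagine variants (or perhaps the higher-latency $\gammap^L$) where separability fails but a DP still applies. For the theorem as stated, however, your argument is both shorter and stronger.
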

    
\begin{proof}
In this proof, the enhancement vectors are always assumed to be nonincreasing.

We show how to compute $\gammap$ in polynomial time using a dynamic programming approach.
In order to explain it, we fix some notation and preliminary observations.
Consider a given zero-set $\Z$ and $m,n$ with $m\le n$ and $\Z\subseteq R_{n,m}$.
The set $\Z$ has a non-empty set of concave corners $\{z^0,z^1,\ldots, z^{p}\}$,
for some $p\ge 0$; we may assume that they are ordered so that,
writing $z^i = (a_{i},b_{p-i})$ for all $i= 0,1,\ldots, p$, we have
$a_0 = b_0 = 0$, $a_1<\ldots<a_p$, and $b_1<\ldots<b_p$ (see Fig.~\ref{fig:Young}).

\begin{figure}[h!]
\begin{center}
\includegraphics[width=0.6\textwidth]{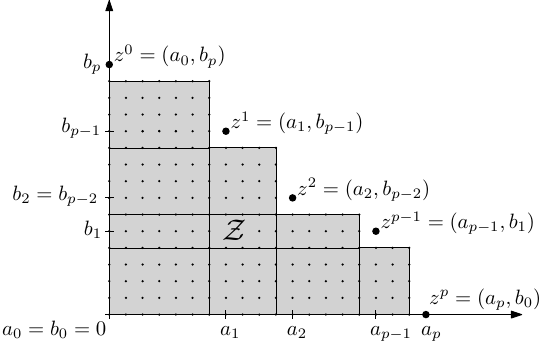}
\end{center}
\caption{A zero-set $\Z$ and its concave corners.}\label{fig:Young}
\end{figure}

Note that any pair of enhancements $\vec r\in \bZ_+^{m}$ and $\vec c\in \bZ_+^n$ defines a feasible solution for $\gammap$ by taking $\cY = \cT(\emptyset)$ for $\cT$ given by $(\Z,\vec r, \vec c)$. 
Indeed, it suffices to apply Lemma~\ref{lem-zeroyoung} to infer that $\cY$ is a Young diagram.

Feasible solutions for $\gammap$ can thus be identified with enhancement pairs $(\vec r,\vec c)$. Any such pair defines
a set of points $P_{\vec r,\vec c} := \{(r_u,c_v)\colon 0\le u\le m-1, 0\le v\le n-1\}$ in $\bZ_+^2$.
Let $Q_{\vec r,\vec c} = P_{\vec r,\vec c}\setminus \cZ$. We claim that in every optimal solution $(\vec r, \vec c)$ for $\gammap$,
the set of minimal points in $Q_{\vec r,\vec c}$, under the natural partial order on $\bZ_+^2$, is a subset of
the set $\{z^0,\ldots, z^p\}$ of concave corners of $\cZ$. Indeed, suppose
this is not the case and that there exists an optimal solution $(\vec r, \vec c)$ for $\gammap$ such that
some minimal point, say $(r_s,c_t)$, of $Q_{\vec r,\vec c}$ is not a concave corner of $\cZ$. Then, there is a concave corner $(a_q,b_{p-q})$ that is strictly below $(r_s, c_t)$, say $a_q<r_s$ and $b_{p-q}\le c_t$.
By optimality and the definition of $\gammap$, we have
$\gammap(\Z,m,n)=|\vec r|+|\vec c|+mn-|\{(v,u)\in R_{n,m}\colon (r_u, c_v) \notin \Z\}|$.
Since the condition $(r_u, c_v) \notin \Z$ is equivalent to the condition that
$(a_{j}, b_{p-j})\le (r_u,c_v)$ for some $j\in [0,p]$ (cf.~Fig.~\ref{fig:Young}),
replacing $\vec r$ with row enhancement $\vec r\,'$ defined by
$$r'_u = \left\{
           \begin{array}{ll}
             a_q, & \hbox{if $u = s$;} \\
             r_u, & \hbox{otherwise,}
           \end{array}
         \right.$$
results in a feasible solution $(\vec r\,', \vec c)$ for $\gammap$
such that $(r_u, c_v) \notin \Z$ if and only if $(r_u', c_v) \notin \Z$.
This implies that the objective function value at $(\vec r\,', \vec c)$ is smaller than the one at
$(\vec r, \vec c)$, contradicting the optimality of $(\vec r, \vec c)$.

The above observation implies that in order to compute the value of $\gammap$, we may restrict our attention to enhancement pairs $(\vec r,\vec c)$
such that $r_u\in \{a_0,\ldots, a_p\}$ for all $u\in [0,m-1]$ and $c_v\in \{b_0,\ldots, b_p\}$ for all $v\in [0,n-1]$.
We call such enhancement pairs \emph{tame}. Any tame enhancement pair $(\vec r,\vec c)$ can be represented by a pair of vectors $\vec x = (x_0,\ldots, x_p)\in \mathbb{Z}_+^{p+1}$ and $\vec y = (y_0,\ldots, y_p)\in \mathbb{Z}_+^{p+1}$
given by
\begin{eqnarray*}
  x_i &=& |\{u\colon  0\le u\le m-1, r_u = a_i\}|\,,\\
  y_j &=& |\{v\colon  0\le v\le n-1, c_v = b_{p-j}\}|\,.
\end{eqnarray*}
Clearly, we have $|\vec x| = m$ and $|\vec y| = n$, and, since enhancements are assumed to be nonincreasing sequences, any pair of vectors $(\vec x, \vec y)\in (\mathbb{Z}_+^{p+1})^2$ with $|\vec x| = m$ and $|\vec y| = n$ represents a unique tame enhancement pair.

Given such a vector pair $(\vec x,\vec y)$, the tame enhacement pair $(\vec r,\vec c)$ represented by
$(\vec x,\vec y)$ satisfies $|\vec r| = \sum_{i = 0}^pa_ix_i$ and $|\vec c| = \sum_{j = 0}^pb_{p-j}y_j$.
Moreover, letting $\cY = \cT(\emptyset)$ for $\cT$ given by $(\Z,\vec r, \vec c)$, we have:
\begin{eqnarray*}
|R_{n,m}\setminus \cY| &=& |\{(v,u)\in R_{n,m}\colon (r_u, c_v) \in \Z\}|~~~~\text{(by the definition of $\cY$)}\\
&=& \sum\{x_iy_j\colon 0\le i\le p\,,\,0\le j\le p\,,\,(a_i, b_{p-j}) \in \Z\}~~~~\text{(by the definition of $(\vec x,\vec y)$)}\\
&=& \sum_{0\le i<j\le p}x_iy_j~~~~\text{(since $(a_i,b_{p-j})\in \Z$ if and only if $i<j$, cf.~Fig.~\ref{fig:Young})}\,.
\end{eqnarray*}
Therefore, we can redefine $\gammap(\Z,m,n)$ equivalently in terms of $(\vec x,\vec y)$ as follows:
   \begin{equation}\label{eq:gamma_1'-alternative-form}
   \begin{aligned}
&\gammap(\Z,m,n)
\\&=\min\left\{\sum_{i = 0}^pa_ix_i+ \sum_{j = 0}^pb_{p-j}y_j+ \sum_{0\le i<j\le p}x_iy_j\colon 
	\vec x, \vec y \in \mathbb{Z}_+^{p+1}\,,\,|\vec x| = m\,,\,|\vec y| = n\right\}\,.
 \end{aligned}
\end{equation}

We now have everything ready to explain our algorithm for computing $\gammap(\cZ,m,n)$.
Given $\Z$, $m$, $n$, the $a_i$s and the $b_j$s as above, we use a dynamic programming approach to compute optimal values to polynomially many instances of a problem generalizing $\gammap$. 
To define the set of inputs for this more general problem, we let, 
for $q\in[0,p]$, 
$$
\cV_q=\{(k,\ell,C)\in\bZ_+^3\colon  k\in[0,m], \ell\in [0,n], 
C\in [0,(p-q)n]\}.
$$
The task is then to compute, for all $q\in[0,p]$, and all 
$(k,\ell,C)\in\cV_q$, the value $f_q(k,\ell,C)$ given by the following expression:
 \begin{equation*}
   \begin{aligned}
   &f_q(k,\ell,C)\\& = \min\left\{\sum_{i = 0}^q(a_i+C)x_i+ \sum_{j = 0}^qb_{p-j}y_j+ \sum_{0\le i<j\le q}x_iy_j\colon 
	\vec x, \vec y \in \mathbb{Z}_+^{q+1}\,,\,|\vec x| = k\,,\,|\vec y| = \ell\right\}\,.
 \end{aligned}
 \end{equation*}
Computing all the $f_q(k,\ell,C)$ values suffices, since the value of
$\gammap(\Z,m,n)$ is given by $\gammap(\Z,m,n) = f_p(m,n,0)$.

The values $f_q(k,\ell,C)$ for all $q\in [0,p]$ and $(k,\ell,C)\in\cV_q$ can be computed recursively as follows. 

\noindent\emph{Step 1}. Compute $f_0(k,\ell,C)$ for all $(k,\ell,C)\in\cV_0$. 

For all $(k,\ell,C)\in\cV_0$, we have
  $$f_0(k,\ell,C) = \min\left\{(a_0+C)x_0+ b_{p}y_0\colon  \vec x, \vec y \in \mathbb{Z}_+\,,\,|\vec x| = k\,,\,|\vec y| = \ell\right\}\,.$$
  As the only feasible solution to the above problem is $(\vec x,\vec y) = (k,\ell)$, we have
  $f_0(k,\ell,C) = (a_0+C)k+ b_{p}\ell$.
  
   \noindent\emph{Step 2}. Suppose that $q\ge 1$ and that we have already computed the values $f_{q-1}(k',\ell',C')$ for all $(k',\ell',C')\in\cV_{q-1}$. 
   Then we compute all the values $f_q(k,\ell,C)$ for $(k,\ell,C)\in \cV_q$.
  
The values of $k$, $\ell$, $C$, $q$ are considered fixed throughout Step 2, so the dependency of various quantities on them is suppressed from the notation. 
We partition the set ${\cal D}$ of feasible solutions $(\vec x,\vec y)$ for $f_q(k,\ell,C)$ according to the values of the last coordinates, $x_q$ and $y_q$: for $A\in [0,k]$ and $B\in [0,\ell]$, we set ${\cal D}_{A,B} = \{(\vec x,\vec y)\in {\cal D}\colon x_q = A, y_q = B\}$.
  Then
    \begin{equation}\label{eq:fKLCq}
f_q(k,\ell,C) = \min_{\substack{0\le A\le k\\0\le B\le \ell}}g(A,B),
  \end{equation}
  where
  $$g(A,B) = \min\left\{\sum_{i = 0}^q(a_i+C)x_i+ \sum_{j = 0}^qb_{p-j}y_j+ \sum_{0\le i<j\le q}x_iy_j\colon  (\vec x,\vec y)\in {\cal D}_{A,B}\right\}\,.$$
 By~\eqref{eq:fKLCq}, it follows that in order to compute the value $f_q(k,\ell,C)$ it suffices to compute the $\cO(|k||\ell|)$ values  $g(A,B)$.

Since every feasible solution $(\vec x,\vec y)\in {\cal D}_{A,B}$ for $g(A,B)$
satisfies $x_q = A$ and $y_q = B$, the objective function value simplifies to
\begin{equation*}
\begin{aligned}
   & \sum_{i = 0}^{q-1}(a_i+C)x_i+ (a_q+C)\cdot A + \sum_{j = 0}^{q-1}b_{p-j}y_j + b_{p-q} \cdot B + \sum_{0\le i<j\le q-1}x_iy_j + \left(\sum_{i = 0}^{q-1}x_i\right)\cdot B \\
  &= \left(\sum_{i = 0}^{q-1}(a_i+B+C)x_i+ \sum_{j = 0}^{q-1}b_{p-j}y_j + \sum_{0\le i<j\le q-1}x_iy_j\right) + ((a_q+C)\cdot A + b_{p-q} \cdot B)\,.
  \end{aligned}
\end{equation*}
Therefore, by considering the vectors $\vec x\,'$ and $\vec y\,'$ obtained from $\vec x$ and $\vec y$ by removing the last coordinates,
$\vec x\,' = (x_0,\ldots, x_{q-1})$ and $\vec y\,' = (y_0,\ldots, y_{q-1})$, we infer that
\begin{equation*}
	\begin{aligned}
&g(A,B)\\ &= \left((a_q+C)\cdot A + b_{p-q} \cdot B\right) &+ \min\Bigg\{\sum_{i = 0}^{q-1}(a_i+B+C)x'_i+ \sum_{j = 0}^{q-1}b_{p-j}y'_j+ \sum_{0\le i<j\le q-1}x'_iy'_j\colon\\&&
\vec x\,',\vec y\,'\in \mathbb{Z}_+^{q-1}\,,\, |\vec x\,'| = k-A\,,\,|\vec y\,'| = \ell-B\Bigg\}\,,
	\end{aligned}
	\end{equation*}
which simplifies further to
\begin{equation}\label{eq:gAB}
g(A,B) = ((a_q+C)\cdot A + b_{p-q} \cdot B) + f_{q-1}(k-A,\ell-B,B+C)\,.	
\end{equation}
Note that since $C\le (p-q)n$ and $B\le \ell\le n$, we have $B+C\le (p-q+1)n$, hence, by our assumption,
the value of $f_{q-1}(k-A,\ell-B,B+C)$ was already computed.
Using~\eqref{eq:fKLCq} and~\eqref{eq:gAB}, this shows that the value $f_q(k,\ell,C)$ can be computed in time $\cO(|k||\ell|)$ from the previously computed values.

The number of values $f_q(k,\ell,C)$ 
for all $q\in [0,p]$ and $(k,\ell,C)\in \cV_q$ 
is $\cO(mn^2p^2)$ and they can 
therefore all be computed in time $\cO(m^2n^3p^2)$.
Recalling that $\gammap(\Z,m,n) = f_p(m,n,0)$,
this completes the proof.
\end{proof}

\begin{proof}[Proof of Theorem~\ref{thm-3-approx}]
The statement follows immediately from Lemma~\ref{lem:approx-gamma} and Theorem~\ref{thm-alg-gammap}.
\end{proof}
			
\section{A constant approximation to Young domination with fixed latency}\label{sec-C-approx} 

In this section we make use of a simpler version of enhanced
domination numbers, namely the minimum norm of enhancements such that the empty set spans in $L$ steps. 
While it gives a less accurate approximation,
it can be used to handle arbitrary latency. Let
\begin{equation*}
\begin{aligned}
\gammad^L=\gammad^L(\Z,m,n)=
\min\{|\vec r|+|\vec c|\colon
\text{ for $\cT$ given by }(\Z,\vec r, \vec c),\text{ } \cT^L(\emptyset)=R_{n,m}\}.
\end{aligned}
\end{equation*}

\begin{lemma}\label{lem:approx-gamma_k'-gamma_k''}
For every $L<\infty$ there is a constant $\delta>0$ such that
$$\delta\gammad^L\le \gammap^L\le \gammad^L.$$
\end{lemma}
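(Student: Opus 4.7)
The right inequality $\gammap^L \le \gammad^L$ is immediate from the definitions. Any nonincreasing pair $(\vec r, \vec c)$ feasible for $\gammad^L$ gives $\cY := \cT^L(\emptyset) = R_{n,m}$, a Young diagram, so $(\vec r, \vec c, \cY)$ is feasible for $\gammap^L$ with the same cost $|\vec r| + |\vec c|$, the third term $|R_{n,m}\setminus\cY|$ being zero. For the left inequality, the plan is to start from an optimal triple $(\vec r, \vec c, \cY)$ for $\gammap^L$ and construct boosted nonincreasing enhancements $(\vec r', \vec c')$ with $\cT_{(\cZ, \vec r', \vec c')}^L(\emptyset) = R_{n,m}$ whose cost $|\vec r'| + |\vec c'|$ is at most $\delta^{-1}(|\vec r| + |\vec c| + |R_{n,m}\setminus\cY|)$.

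The key structural observation is that, by Lemma~\ref{lem-zeroyoung}, $\cY$ is a Young diagram, hence the uncovered set $U := R_{n,m}\setminus\cY$ is a rotated Young diagram occupying the upper-right corner of $R_{n,m}$. If $U$ has bounding rectangle of side lengths $m' \times n'$, then the standard L-shape inequality for Young diagrams (since $U$ contains a full row of length $n'$ and a full column of height $m'$ meeting at the corner $(n-1,m-1)$) gives $|U| \ge m' + n' - 1$. I would then boost only the trailing entries: add $\alpha$ to the last $m'$ entries of $\vec r$ and $\beta$ to the last $n'$ entries of $\vec c$, where $(\alpha, \beta)$ is chosen so that the boosted enhancements place every point of the bounding rectangle of $U$ outside $\cZ$ after a single step of the enhanced dynamics. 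Because we only raise trailing values of already nonincreasing vectors, $(\vec r', \vec c')$ remains nonincreasing, and the cost overhead $m'\alpha + n'\beta$ is bounded by a constant times $m' + n' \le |U| + 1$.

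To verify that the boosted dynamics covers $R_{n,m}$ in $L$ steps, I would combine monotonicity of $\cT$ with iterated application of Lemma~\ref{lemma-enh-reg}: the original enhanced dynamics already produces $\cY$, while the boost places the full bounding rectangle of $U$ into $\cT^1_{(\cZ,\vec r',\vec c')}(\emptyset)$; a further $L-1$ steps then yield full coverage via monotone propagation.

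The main obstacle is ensuring that $\delta$ depends only on $L$ and not on $\cZ$, $m$, or $n$, since the concave corners of $\cZ$ — which naively control the size of $(\alpha,\beta)$ — can be arbitrarily large. I expect this to be resolvable by an inductive argument on $L$: for each additional unit of latency, the dynamics amplifies a given boost, so the per-step boost needed to secure full coverage from a given uncovered triple shrinks accordingly, and the accumulated constant $\delta(L)$ remains bounded. The base case $L = 1$ can be treated directly via the description of $\gammad^1$ and $\gammap^1$ in terms of the concave corners $(a_q, b_{p-q})$ of $\cZ$ (as in the proof of Theorem~\ref{thm-alg-gammap}), and the inductive step reduces the latency-$L$ problem to latency-$(L-1)$ applied to a modified but structurally similar instance.
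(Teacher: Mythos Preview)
Your right inequality is correct, and the structural observations about $U = R_{n,m}\setminus\cY$ (upward-closedness, the L-shape bound $|U|\ge m'+n'-1$) are also fine. The genuine gap is exactly the one you flag as ``the main obstacle,'' and your sketch does not resolve it.

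Concretely: to place the bounding rectangle of $U$ outside $\cZ$ in one step you need $(\alpha,\beta)$ to dominate a concave corner of $\cZ$, and even under the natural assumption $\cZ\subseteq R_{n,m}$ such a corner can have $\alpha$ as large as $n$ or $\beta$ as large as $m$. The overhead $m'\alpha + n'\beta$ is then of order $m'n + n'm$, which is not bounded by any universal constant times $|U|\ge m'+n'-1$. Your proposed induction on $L$ is only a slogan (``the dynamics amplifies a given boost''); you do not say what quantity is being inducted on, what the modified instance is, or why the per-step boost shrinks. There is no evident mechanism by which extra latency converts a $\cZ$-dependent boost into a $\cZ$-independent one, and the base case $L=1$ is not treated either. (A small side error: adding a constant to the trailing entries of a nonincreasing vector does \emph{not} preserve monotonicity---e.g.\ $(2,1,0,0)\mapsto(2,1,3,3)$---though this is inconsequential since $\gammad^L$ does not require nonincreasing enhancements.)

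The paper's argument takes a completely different route: a case analysis on a geometric parameter $s$ (the least integer with $(s,\lfloor ms/n\rfloor)\notin\cZ$). When $s$ is small, the optimal $\cY$ for $\gammap^L$ must already cover a large subrectangle of $R_{n,m}$; six permuted copies of the enhancement pair then tile $R_{n,m}$, yielding $\gammad^L\le 6\gammap^L$. When $s$ is large (or $m$ is tiny), one bounds $\gammap^L$ from below via $\gamma^\infty$ (through Lemma~\ref{lem:approx-gamma} and the area bound $\gamma^\infty\ge|\cZ|/4$ from \cite{GSS}) and $\gammad^L$ from above by a crude estimate; both are then comparable to $mn$ (respectively to $ma$). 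The final constant is $\delta=1/(360L)$. The point is that the proof never tries to convert one optimal solution into the other by a local boost; instead it controls both quantities separately in each geometric regime.
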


\begin{proof}
Without loss of generality we may assume that $m\le n$.

The right inequality is immediate from the definitions.
The proof of the left inequality is divided into
three cases.

\noindent{\it Case 1\/}: $m\le 100$.

\noindent Let $a$ be the length of the longest row of $\Z$.
Then, by Lemma~\ref{lem:approx-gamma}, 
$$L\,\gammap^L\ge\gamma^L
\ge \gamma^\infty\ge a, $$ as a set $A$ consisting of any $a-1$ fully occupied columns is a fixed point: $\cT(A)=A$. 
Also, $\gammad^L\le \gammad^1\le ma\le 100a$, as 
constant $\vr$ with $m$ components equal to $a$
spans in one step. 
Therefore $\gammad^L\le 100L\,\gammap^L$.

For the remaining two cases, choose $s$ to be the minimal positive integer such that \hbox{$(s, \lfloor ms/n\rfloor)\notin\cZ$}.

\noindent{\it Case 2\/}: $m>100$, $s\le n/8$.

\noindent The constant enhancements $\vr=(s,\ldots,s)\in \bZ_+^m$, 
$\vc=(\lfloor ms/n\rfloor, \dots, \lfloor ms/n\rfloor)\in\bZ_+^n$
span in one step, so we have $\cY=R_{n,m}$, and therefore,
$$\gammap\le ms+n \floor{ms/n}\le 2ms,$$
and so we may restrict to $\cY$
with $|R_{n,m}\setminus \cY|\le 2ms$ in our objective function
for $\gammap^L$. 
Assume that some enhancement pair $(\vr,\vc)$, together with a Young diagram $\cY$ that satisfies this restriction, realize $\gammap^L$. 
Then consider the largest $t$ such that 
$$
(m-\lceil mt/n\rceil, n-t)\in R_{n,m}\setminus \cY.
$$
Then 
$$
t\cdot \lceil mt/n\rceil\le |R_{n,m}\setminus \cY|\le 2ms
$$
and so $t^2\le 2ns\le n^2/4$ and $t\le n/2$. As 
$$
(m-\lceil m(t+1)/n\rceil, n-t-1)\in\cY, 
$$
$\cY$ includes a rectangle whose number of columns is 
$n-t\ge n/2$ and the number of rows at least 
$m-mt/n-1\ge m/2-1\ge m/3$. 

It follows that the dynamics with enhancements $\vr$ and 
$\vc$ occupy in time $L$ a rectangle with $\lceil m/3\rceil$ rows and $\lceil n/2\rceil$ columns.
By permuting the enhancements we may assume this occupied rectangle is placed anywhere in our universe $R_{n,m}$. Clearly, we can cover $R_{n,m}$ with six properly placed such rectangles, each with its own enhancement vector. 
Summing the six row and six column enhancement vectors therefore produces row and column enhancements vectors $\vr_0$ and $\vc_0$ that occupy the entire $R_{n,m}$ at time $L$, and $|\vr_0|\le 6|\vr|$ and 
$|\vc_0|\le 6|\vc|$. Therefore, in this case, 
$$\gammad^L\le |\vr_0|+|\vc_0|\le 6(|\vr|+|\vc|)\le 6\gammap^L.$$

\noindent{\it Case 3\/}: $m>100$, $s> n/8$.

\noindent Using Lemma~\ref{lem:approx-gamma} and \cite[Theorems 1 and 2]{GSS},
\begin{equation*}
 \begin{aligned}
 L\,\gammap^L&\ge 
\gamma^L
\ge \gamma^\infty
\ge
|\cZ|/4
\\&\ge(s-1)(m(s-1)/n-1)/4
\\&\ge
(n/9)\cdot (m/9-1)/4
\\&\ge mn/360
 \end{aligned}   
\end{equation*}
as $n\ge m>100$. 
On the other hand, we trivially have $\gammad^L\le \gammad^1\le mn$.

From the three cases, we see that we can take 
$\delta=1/(360 L)$.
\end{proof}

A polynomial algorithm for $\gammad^L$ will be a consequence of the following lemma.

 \begin{lemma}\label{lem:poly-gamma''_k} For any $L\ge 1$,
 every minimal $\vr$ and $\vc$ for $\gammad^L$
 each have at most $2^{L}$ different values.
 \end{lemma}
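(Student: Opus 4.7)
The plan is to prove this by induction on $L$, with the central observation that in a minimal solution, distinct values of $r_i$ correspond to distinct \emph{row trajectories} under the enhanced dynamics, and that these trajectories form a recursive binary structure giving the bound $2^L$.

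By Lemma~\ref{lemma-swap}, I may permute rows and columns so that $\vr$ and $\vc$ are both nonincreasing; Lemma~\ref{lem-zeroyoung} then guarantees that each iterate $D_t := \cT^t(\emptyset)$ is a Young diagram, with row lengths $\ell_i(t)$ and column heights $h_j(t)$ nonincreasing in $i$ and $j$ respectively. The key transition formula is
\[\ell_i(t+1)=\max\!\bigl(\ell_i(t),\, F_t(\ell_i(t)+r_i)\bigr),\quad F_t(x):=|\{j\colon (x,\,h_j(t)+c_j)\notin\cZ\}|,\]
with $F_t$ nondecreasing, and a symmetric formula holds for column heights.

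The first step is a \emph{merging lemma}: if two rows $i,i'$ satisfy $\ell_i(t)=\ell_{i'}(t)$ for every $t\in[0,L]$ and $r_i>r_{i'}$, then replacing $r_i$ by $r_{i'}$ leaves every $D_t$ unchanged. I would prove this by induction on $t$: under the matching-trajectory hypothesis, either $F_t(\ell_i(t)+r_i)$ and $F_t(\ell_i(t)+r_{i'})$ both fall $\le \ell_i(t)$ (so the max is $\ell_i(t)$ in either case), or they must coincide (otherwise $\ell_i(t+1)>\ell_{i'}(t+1)$, contradicting the hypothesis), and in either case the reduced $r_{i'}$ gives the same new row length; the column heights are then unchanged because they depend only on the row-length profile. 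This reduction would strictly decrease $|\vr|$, contradicting minimality; consequently, in a minimal $(\vr,\vc)$, distinct values of $r_i$ biject with distinct row trajectories $(\ell_i(1),\dots,\ell_i(L))$.

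It then suffices to bound the number of distinct row trajectories by $2^L$. The plan is an inductive doubling argument over $t$: partition rows into equivalence classes by the trajectory prefix $(\ell_i(1),\dots,\ell_i(t))$, and show there are at most $2^t$ classes at time $t$. The base $t=0$ is immediate (one class), and the step $t\to t+1$ reduces to the claim that each class splits into at most two sub-classes at time $t+1$. Within a class all rows share $\ell(t)=L_t$, and the new length is the step function $r\mapsto \max(L_t, F_t(L_t+r))$; I would argue that in a minimal solution this step function takes at most two values on the enhancements actually present within the class — informally, the rows that do not grow at step $t+1$ versus those that do.

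The main obstacle will be rigorously justifying the ``at most two sub-classes per class'' step: a priori the growing rows could extend to several distinct new lengths. The intended argument is a second, localized merging argument performed within each class. Concretely, if three distinct values of $\ell(t+1)$ appeared among rows of one class, one could reduce the $r_i$'s producing the largest value down to the value producing the middle one; applying Lemma~\ref{lem-zeroyoung} to preserve the Young shape of subsequent iterates, Lemma~\ref{lemma-enh-reg} to track the effect on all later row lengths and column heights through the cascading dynamics, and the overall monotonicity of $\cT$, one shows that the reduced dynamics still spans $R_{n,m}$ in $L$ steps, contradicting minimality of $|\vr|$. The bound $2^L$ on the distinct values of $\vc$ follows from the completely symmetric argument with rows and columns swapped.
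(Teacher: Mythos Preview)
Your merging lemma is sound and cleanly reduces the problem: in a minimal pair $(\vr,\vc)$, distinct values of $r_i$ must correspond to distinct row trajectories $(\ell_i(1),\dots,\ell_i(L))$. The gap is in the inductive ``at most two sub-classes'' step. Your localized merging goes in the wrong direction: if rows in one class acquire three distinct lengths $a<b<c$ at time $t+1$ and you lower the $r_i$ producing $c$ to the value producing $b$, then by monotonicity of the enhanced dynamics the resulting iterates are \emph{contained} in the original ones at every subsequent time --- they can only shrink, and nothing guarantees spanning of $R_{n,m}$ by time $L$. Concretely, every column $j$ with $b\le j<c$ loses the occupied point it had from row $i$ at time $t+1$, and this deficit can propagate. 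Lemma~\ref{lemma-enh-reg} does not help here: it bounds an enhanced step from above by a regular step started from an enlarged set, which is irrelevant to comparing two enhanced dynamics with different $\vr$.

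More structurally, there is no reason the forward branching should be binary at each step; the paper obtains the bound $2^L$ from the opposite direction. It constructs a \emph{backward} certificate: a chain $R_{n,m}=\cY_L\supseteq\cY_{L-1}\supseteq\cdots\supseteq\cY_0=\emptyset$ with $\cY_i\subseteq\cT^i(\emptyset)$, where each $\cY_{i-1}$ is a union of ``triggers'' (Young diagrams with at most two convex corners, each sharing a coordinate with the triggered point) for the convex corners of $\cY_i$, so the set of $x$-coordinates of all convex corners at most doubles at each backward step. The key point is then a \emph{sufficiency} claim: any nonincreasing enhancement pair agreeing with $(\vr,\vc)$ at these at most $2^L$ column indices (and the analogous row indices) already spans in $L$ steps; minimality forces $\vc$ (resp.\ $\vr$) to be constant between consecutive such indices. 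What your approach is missing is precisely this small witness structure that certifies spanning while depending on only $2^L$ coordinates of each enhancement vector.
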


 \begin{proof}
 Assume that $\vr$ and $\vc$ are some minimal enhancements
 for  $\gammad^L$ and that they are both nonincreasing. 
 These determine the enhanced dynamics $\cT$ and will be fixed for the rest of the
 proof.
 
A Young diagram $\cY$ is a {\it trigger\/} for a point
$(a,b)\in R_{n,m}$, if one of the following holds:
\begin{itemize}
\item $\cY=R_{a+1,b+1}$; or
\item $\cY$ has exactly two convex corners: one at $(a,b_1)$ for
$0\le b_1< b$ and one at $(a_1,b)$ for
$0\le a_1< a$ and
$(r_b+a_1+1,c_a+b_1+1)$ is a concave corner of $\cZ$; or
\item $\cY$ has exactly one convex corner at $(a_1,b)$ for
$0\le a_1< a$ and
$(r_b+a_1+1,c_a)$ is a concave corner of $\cZ$; or
\item $\cY$ has exactly one convex corner at $(a,b_1)$ for
$0\le b_1< b$ and $(r_b,c_a+b_1+1)$
is a concave corner of $\cZ$; or
\item $\cY=\emptyset$ and
$(r_b, c_a)$ is a concave corner of $\cZ$.
\end{itemize}
Observe that, if $\cY$ is a trigger for $(a,b)$, then
$(a,b)\in\cT(\cY)$. Conversely, if $(a,b)\in \cT(\cY')$, for some 
Young diagram $\cY'$, then there is a trigger 
$\cY\subseteq \cY'$ for $(a,b)$. Indeed, the five items 
in the definition of the trigger correspond to the 
cases when $(a,b)\in \cY'$, or $(a,b)\in \cT(\cY')\setminus \cY'$ and uses, in addition to the enhancements, both $\cN^r(a,b)\cap \cY'$ and $\cN^c(a,b)\cap \cY'$, one of them, or neither, to become occupied. See Figure~\ref{fig:Ltrigger} for an example.

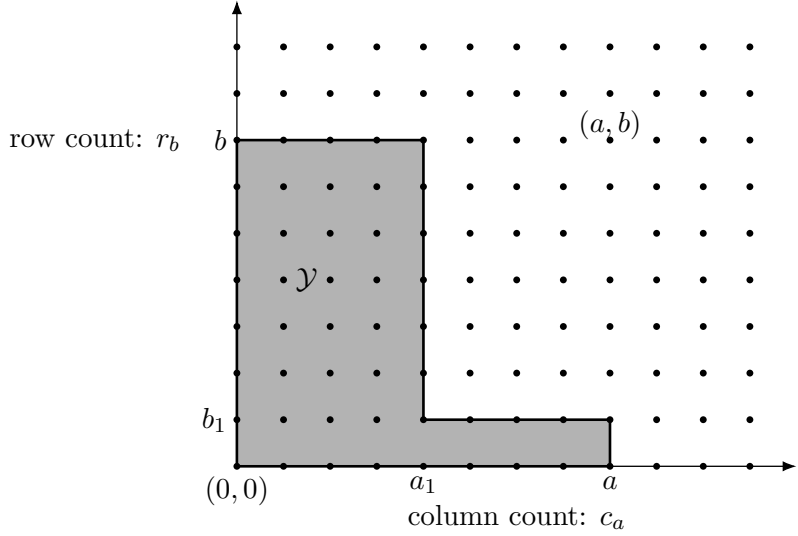
\begin{figure}[h!]
\begin{center}
\definecolor{lgrey}{rgb}{0.7, 0.7, 0.7}
\hskip-2cm
\begin{tikzpicture}
\def\u{0.62}
\fill[lgrey] (0,0) rectangle (4*\u,7*\u);
\fill[lgrey] (0,0) rectangle (8*\u,\u);
\draw [black, line width=1] 
      (0,0)--(0,7*\u)--(4*\u,7*\u)--(4*\u,\u)--(8*\u,\u)--(8*\u,0)--(0,0); 
\foreach \i in {0,...,11} 
 \foreach \j in {0,...,9} 
 \fill [black] (\u*\i, \u*\j) circle (1.3pt);
 \draw [-Latex,line width=0.5] (0,0) -- (12*\u,0);
 \draw [-Latex,line width=0.5] (0,0) -- (0,10*\u);
 \node [below] at (8*\u,0) {$a$};
 \node [below] at (4*\u,0) {$a_1$};
 \node [below] at (6*\u,-0.6*\u) {column count: $c_a$};
  \node [left] at (0,7*\u) {$b$};
  \node [left] at (0,1*\u) {$b_1$};
  \node [left] at (-\u,7*\u) {row count: $r_b$};
  \node at (1.5*\u,4*\u) {$\mathcal Y$};
  \node [above] at (8*\u,6.8*\u) {$(a,b)$};
  \node [below] at (0,0) {$(0,0)$};
\end{tikzpicture}
\end{center}
\caption{A trigger $\cY$ for
the point $(a,b)=(8,7)$. For this example, we
assume that $(r_7+5, c_8+2)$ is
a concave corner of $\cZ$. Vertices
that touch the shaded area comprise $\cY$.}
\label{fig:Ltrigger}
\end{figure}

Note also that any convex corner
of $\cY$ shares at least one coordinate with $(a,b)$,
and when $\cY$ has two corners, each shares a
different coordinate with $(a,b)$.

 We now define, by backward recursion, a sequence of Young diagrams
$\cY_{L}\supseteq\cY_{L-1}
\supseteq\ldots\supseteq \cY_0=\emptyset$
such that $\cY_i\subseteq\cT^i(\emptyset)$ for all $i=0,1,\ldots, L$.
We start with $\cY_L=R_{n,m}$.
The set $\cY_{L-1}'=\cT^{L-1}(\emptyset)$ of occupied sites
of the enhanced dynamics at time $L-1$ is
a Young diagram (Lemma~\ref{lem-zeroyoung}).
As the enhancements span
in $L$ steps, $(n-1,m-1)\in\cT(\cY_{L-1}')$ and
therefore there exists a trigger
$\cY_{L-1}\subseteq\cY_{L-1}'$ for
 $(n-1,m-1)$ (arbitrarily chosen in case it is not unique).

In general, assume that we have chosen $\cY_i$, for some $i\in\{1,\ldots,L\}$.
 Let $\cY_{i-1}'=\cT^{i-1}(\emptyset)$. 
 By the induction hypothesis, $\cY_i\subseteq\cT^i(\emptyset) = \cT(\cY_{i-1}')$.
In particular, for any convex corner $z\in \cY_i$, we have
$z\in \cT(\cY_{i-1}')$ and therefore there exists a trigger $\cY_{i-1,z}\subseteq\cY_{i-1}'$ for $z$. 
Now let
\begin{equation}\label{trigger-induction}
\cY_{i-1}=\bigcup_{\text{$z$ convex corner of $\cY_i$}}\cY_{i-1,z}.
\end{equation}
Then $\cY_{i-1}\subseteq \cY_i$ and $\cY_{i-1}
\subseteq\cT^{i-1}(\emptyset)$, which completes
the inductive step in the construction. Note that
$\cY_0\subseteq \cT^0(\emptyset)=\emptyset$.

For all $i = 0,1,\ldots, L$, let $Q_i$ be the set of all convex corners of $\cY_i$ and $Q=\bigcup_{i=0}^L Q_i$.
Let $\pi_x$ and $\pi_y$ be the projections onto the $x$-axis and the $y$-axis, respectively.
For all $j = 0,1,\ldots, L$, let $P_j = \bigcup_{i = j}^L \pi_x(Q_i)$.
By definition of a trigger and (\ref{trigger-induction}),
$$|P_j|\le 2|P_{j-1}|$$
for every $i=1,\ldots, L$, and therefore $|\pi_x(Q)|\le 2^{L}$.
Analogously, $|\pi_y(Q)|\le 2^{L}$.

\newcommand{\vrp}{\vr\,'}
\newcommand{\vcp}{\vc\,'}

 Now assume that $i_1<\ldots<i_A=n-1$ and $j_1< \ldots<j_B=m-1$,
 $A,B\le 2^{L}$ are exactly the elements of $\pi_x(Q)$ and
 $\pi_y(Q)$, respectively.
Assume that $\vrp$ and $\vcp$ are any nonincreasing enhancements
whose values agree at these indices, that is,
$c_{i_a}'=c_{i_a}$, $a=1,\ldots, A$,
and
$r_{j_b}'=r_{j_b}$,  $b=1\ldots, B$.
Denote the dynamics with these enhancements by $\widehat\cT$.
Then we claim that 
$\widehat\cT^L(\emptyset)=R_{n,m}$.

To verify the claim, we prove by induction on $i$ that
$\cY_i\subseteq \widehat \cT^i(\emptyset)$. This
is clearly true for $i=0$, and the induction
step follows from the construction of $\cY_i$.

As $\vr$ and $\vc$ are assumed minimal, the above claim
implies that any of their values must equal a value at
one of the above indices $i_a$ or $j_b$. Namely,
 for any $j=0,\ldots,m-1$,
 $r_j=r_{j_b}$ for the smallest $j_b\ge j$ and,
 for any $i=0,\ldots,n-1$,
 $c_i=c_{i_a}$ for the smallest $i_a\ge i$. As the number of 
 these indices is at most $2^{L}$, this concludes the proof.
\end{proof}

\begin{theorem} \label{thm-alg-gammad}
Assume $a,b\in \bN$, $2\le L<\infty$, and $\cZ\subseteq R_{a,b}\subseteq R_{n,m}$. 
Then $\gammad^L\le ab$ and $\gammad^L$ can be computed in time
$\cO((\min(m,ab)\cdot \min(n,ab))^{2^{L}+1}(ab)^{2^{L}})$.
\end{theorem}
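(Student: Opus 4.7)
The plan is to establish the size bound $\gammad^L \le ab$ by an explicit enhancement, and then derive the algorithm by bounding the search space using Lemma~\ref{lem:poly-gamma''_k}. For the bound, I set $\vec c = \vec 0$ and $\vec r$ with $r_i = a$ for $i \in [0, b-1]$ and $r_i = 0$ otherwise, so that $|\vec r| + |\vec c| = ab$. Starting from $\emptyset$, each site $(j,i)$ with $i < b$ has enhanced pair $(r_i, c_j) = (a, 0) \notin R_{a, b} \supseteq \cZ$ and thus becomes occupied at time $1$. At time $2$, every site $(j, i)$ with $i \ge b$ has exactly $b$ occupied column neighbors (one in each of the first $b$ rows), so its enhanced pair has second coordinate $\ge b$ and again lies outside $\cZ$. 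Therefore $\cT^2(\emptyset) = R_{n,m}$, and the monotonicity $\gammad^{L+1} \le \gammad^L$ extends this to every $L \ge 2$.

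For the algorithm, I would enumerate candidate pairs $(\vec r, \vec c)$, simulate $L$ steps of the enhanced dynamics from $\emptyset$, and record the smallest $|\vec r| + |\vec c|$ that achieves spanning. Three reductions control the size of the search space. By Lemma~\ref{lem:poly-gamma''_k}, some minimal pair consists of nonincreasing step functions with at most $2^L$ distinct values each. Since $\cZ \subseteq R_{a,b}$, any $r_i > a$ can be replaced by $a$ without altering whether $(r_i, c_j) \in \cZ$, and likewise for columns; so values may be taken in $\{0, 1, \ldots, a\}$ for $\vec r$ and $\{0, 1, \ldots, b\}$ for $\vec c$. Finally, the size bound $\gammad^L \le ab$ combined with the monotonicity of $\vec r$ forces $r_i = 0$ for $i \ge ab$, and analogously $c_j = 0$ for $j \ge ab$; setting $M = \min(m, ab)$ and $N = \min(n, ab)$, only the first $M$ entries of $\vec r$ and the first $N$ entries of $\vec c$ are nontrivial.

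Combining these reductions, there are at most $\cO((Ma)^{2^L})$ candidates for $\vec r$ (choose at most $2^L$ transition positions in $[0, M-1]$ and at most $2^L$ values), and analogously $\cO((Nb)^{2^L})$ candidates for $\vec c$, giving $\cO((MN \cdot ab)^{2^L})$ candidate pairs in total. For each pair, Lemma~\ref{lem-zeroyoung} ensures every $\cT^k(\emptyset)$ is a Young diagram, and by interchangeability of rows (respectively columns) of equal enhancement the relevant state lives on a block partition of $R_{n,m}$ of dimensions at most $2^L \times 2^L$; after a one-time $\cO(ab)$ preprocessing of $\cZ$, simulating $L$ steps and verifying that $\cT^L(\emptyset)=R_{n,m}$ can be carried out in $\cO(MN)$ time for fixed $L$. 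Multiplying, the total running time is $\cO((MN)^{2^L+1}(ab)^{2^L})$, as claimed.

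The main obstacle is the correctness of the restricted enumeration: Lemma~\ref{lem:poly-gamma''_k} supplies the $2^L$-bound on distinct values, while the value and support reductions require short direct verifications that, in particular, no simulation behavior is lost by truncating to $R_{N,M}$. The block-level simulation itself, while routine, must be set up carefully to achieve the claimed per-configuration cost of $\cO(MN)$.
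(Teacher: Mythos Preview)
Your proposal is correct and follows essentially the same approach as the paper: the same explicit enhancement establishes $\gammad^L\le ab$, and the algorithm rests on the same three reductions (Lemma~\ref{lem:poly-gamma''_k} for the $2^L$ value bound, capping entries at $a$ and $b$, and truncating the support to the first $\min(m,ab)$ rows and $\min(n,ab)$ columns). The only cosmetic difference is in the bookkeeping: the paper counts $\cO((m'n')^{2^L-1})$ choices of dividing points and pays $\cO((m'n')^2)$ per candidate for a naive simulation on $R_{n',m'}$ (citing \cite[Lemma~2.3(3)]{GPS} to justify the truncated universe), whereas you overcount transition positions by one power but recoup it via the block-level simulation, arriving at the same total bound.
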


\begin{proof}
First observe that there exist enhancements $\vr$ and $\vc$
with $|\vr|+|\vc|=ab$ that span at time~$2$: for example, we can let $r_0=\cdots=r_{b-1}=a$ and make all remaining enhancements~$0$ (in particular, $c_j = 0$ for all $j$). 
Therefore, since $L\ge 2$, we obtain that $\gammad^L\le ab$ and we may restrict our enhancements to those with at most $ab$ nonzero components. 

Let $m'=\min(m,ab+1)$, $n'=\min(n,ab+1)$. With our restriction, as we can assume that the enhancements are nonincreasing, we only 
need to choose   first $m'$ row, and first $n'$ column enhancements. 
By Lemma~\ref{lem:poly-gamma''_k}, it suffices to choose at most $2^{L}-1$ dividing points between different row enhancements, and the same for column enhancements. 
This can be done in $\cO((m'n')^{2^{L}-1})$ many ways.
Next, note that we can replace any row (resp.~column) enhancement  larger than $a$ by $a$ (resp.~larger than  $b$ by $b$) without affecting the spanning time. 
Therefore, for any fixed choices of the $2^{L}-1$ dividing points for row and column indices, the number of candidates for minimal enhancement vectors $\vr$ and $\vc$ is $\cO((a b)^{2^{L}})$. 
For every such candidate, we only need to run the dynamics on $R_{n',m'}$ to verify if it spans in $L$ steps (by part (3) of Lemma 2.3 in \cite{GPS}). For a single update, we need to compute row and column sums and check against every concave corner of $\cZ$.
Since $\cZ$ has at most $\min(a,b)$ concave corners, a single update can be computed in time $\cO(m'n'(m'+n'+\min(a,b))) = \cO((m'n')^2)$; therefore, since $L$ is fixed, $L$ steps can be computed  in time $\cO((m'n')^{2})$. 
Multiplying the number of candidates by the verification time for each gives the claimed upper bound. 
\end{proof}

\begin{proof}[Proof of Theorem~\ref{thm-const-approx}]
By Theorem~\ref{thm-alg-gammad}, 
there exists a constant $C>1$ such that
for every fixed $L$, there exists a $C$-approximation algorithm for $\gammad^L$ that runs in time polynomial in $ab$. 
Lemmas~\ref{lem:approx-gamma} and~\ref{lem:approx-gamma_k'-gamma_k''}  now finish the proof.
\end{proof}

\section{Comparison between different latencies}\label{sec-comp}

In the next two lemmas, we assume that $a$ and $b$ are the sizes of the longest row and column of $\cZ$, respectively.

\begin{lemma}\label{lemma-gamma-2}
If $m,n>ab$, then 
$$
\gamma^2(\cZ,m,n)= \min (bx+ay-xy), $$
where the minimum is over all concave corners $(x,y)$ of $\cZ$. 
\end{lemma}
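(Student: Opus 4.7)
The plan is to prove matching upper and lower bounds on $\gamma^2$ corresponding to the minimum in the displayed formula.

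For the upper bound, I would exhibit, for each concave corner $(x,y)\in\cI_\cZ$, the explicit set
\[
A \;=\; \bigl([0,x-1] \times [0,b-1]\bigr) \,\cup\, \bigl([0,a-1] \times [0,y-1]\bigr),
\]
which fits in $R_{n,m}$ because $m,n>ab$, and has cardinality $bx+ay-xy$ by inclusion--exclusion. The verification is a direct unfolding: each of the $x$ leftmost columns contains $b$ points of $A$, and since $\cZ\subseteq R_{a,b}$ a column count of $b$ automatically lies outside $\cZ$, so those columns fill up at step $1$; symmetrically the $y$ lowest rows fill up. Thus $\cT(A) \supseteq ([0,x-1]\times[0,m-1]) \cup ([0,n-1]\times[0,y-1])$, and every remaining vertex $(j,i)\notin\cT(A)$ (forced to have $j\ge x$, $i\ge y$) has row count at least $x$ and column count at least $y$ in $\cT(A)$, so the concave corner $(x,y)$ places it in $\cT^2(A)$.

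For the lower bound, the first step is a worst-case reduction: the formula's value is at most $ab$ (it equals $ab$ at the corners $(0,b)$ and $(a,0)$), so I may assume $|A|\le ab$. Writing $\alpha_i=\row_i(A)$ and $\beta_j=\col_j(A)$, the inequality $\sum_i\alpha_i=|A|\le ab<m$ forces some row $i^*$ to have $\alpha_{i^*}=0$, and symmetrically some column $j^*$ to have $\beta_{j^*}=0$. The critical vertex is $v^* = (j^*,i^*)$: it is not in $A$ and not in $\cT(A)$ (since $(0,0)\in\cZ$), so it must be occupied at step $2$. Computing its row count: row $i^*$ of $\cT(A)$ equals $\{j:(0,\beta_j)\notin\cZ\} = \{j:\beta_j\ge b\}$, of size $s := |\{j:\beta_j\ge b\}|$; likewise the column count is $t := |\{i:\alpha_i\ge a\}|$. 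Hence $(s,t)\notin\cZ$, so $(s,t) \ge (a_k,b_{p-k})$ componentwise for some concave corner $(a_k,b_{p-k})$ of $\cZ$, using the staircase labeling from Section~\ref{sec-3-approx}.

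The main obstacle is converting this into the claimed formula, which I would handle as follows. A standard inclusion--exclusion between heavy rows and heavy columns gives $|A|\ge sb+ta-st$, since the $s$ heavy columns contribute at least $sb$ points, the $t$ heavy rows at least $ta$, and their intersection in $A$ has size at most $st$. Writing $u=s-a_k\ge 0$ and $v=t-b_{p-k}\ge 0$, a short expansion yields
\[
(sb+ta-st) - (ba_k+ab_{p-k}-a_kb_{p-k}) \;=\; (b-t)u + (a-a_k)v,
\]
which is nonnegative whenever $s\le a$ and $t\le b$; and if $t>b$ then $|A|\ge ta>ab$ already exceeds the right-hand side, with an analogous argument for $s>a$. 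The conceptual difficulty is precisely the choice of the single witness $v^*$: by picking the intersection of an empty row and an empty column, all latency-$2$ covering requirements collapse to the one inequality $(s,t)\notin\cZ$, after which the area-type counting immediately matches the target formula.
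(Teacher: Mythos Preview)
Your proof is correct and follows essentially the same approach as the paper's: the upper bound construction is identical, and for the lower bound both arguments single out a vertex lying in an empty row and an empty column (the paper permutes so that this is $(n-1,m-1)$, you locate $(j^*,i^*)$ directly), observe that its time-$1$ row and column counts are the numbers $s,t$ of heavy columns and rows, and conclude $(s,t)\notin\cZ$.

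The only difference is in the final counting. The paper selects exactly $x\le s$ heavy columns and $y\le t$ heavy rows for a concave corner $(x,y)$, and counts directly: the $x$ columns give $\ge bx$ points, and each of the $y$ rows contributes $\ge a-x$ further points outside those columns, yielding $|D|\ge bx+(a-x)y$ in one line. You instead keep all $s$ columns and $t$ rows, obtain $|A|\ge sb+ta-st$ by inclusion--exclusion, and then need the extra algebraic step showing $sb+ta-st\ge ba_k+ab_{p-k}-a_kb_{p-k}$ (together with the side cases $s>a$ or $t>b$). This detour is correct but avoidable: picking only $x$ columns and $y$ rows from the start makes the algebraic comparison and the side cases unnecessary.
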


\begin{proof}
Assume that $D$ is a set that realizes $\gamma^2$. As $R_{a,b}$
clearly occupies $R_{n,m}$ in two steps, $|D|\le ab$. By permuting 
rows and columns, we may assume that $D\subseteq [0,ab-1]\times [0,ab-1]$, and therefore the last row and the last column of $R_{n,m}$ contain no elements of $D$. 
In order for the point $(n-1,m-1)$ to become 
occupied at time $2$, there have to be at time $1$ some $x\in [0,a]$ occupied points
in the last row and some $y\in[0,b]$ occupied points in the last 
column, with $(x,y)$ a concave corner of $\cZ$. Each of the $x$ points must 
have at least $b$ points in $D$ in its column, while each of the 
$y$ points must have at least $a$ points in its row.

Thus, we may assume that the first $x$ columns each contain $b$ points 
in $D$; then, some $y$ rows each contain $a$ points in $D$, 
at least $a-x$ of them not within the first $x$ columns. Therefore, 
$\gamma^2=|D|\ge bx+(a-x)y$.

To show the opposite inequality, take $(x,y)$ to be the concave 
corner at which the minimum is achieved, then 
take 
$$
D=R_{a,y}\cup R_{x,b},
$$
which occupies all points on the first $x$ columns and first $y$ rows
of $R_{n,m}$ at time $1$ and then the entire $R_{n,m}$ at time $2$. 
 \end{proof}

\begin{lemma}\label{lemma-large-R}
    If $L\ge 2$ and $m,n>ab$, then $\gamma^L(\cZ,m,n)= \gamma^L(\cZ,\infty,\infty)$.
\end{lemma}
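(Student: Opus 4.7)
My plan is to reduce the lemma to a single Key Claim comparing the finite and infinite dynamics, and then derive both inequalities from it. Since $R_{a,b}$ spans any rectangle containing it in exactly two steps (rows/columns with $\ge a,b$ points fill at time~1, the rest at time~2), both $\gamma^L(\cZ,m,n)$ and $\gamma^L(\cZ,\infty,\infty)$ are at most $ab$ for $L\ge 2$. Hence any minimizing set has at most $ab$ points, and by Lemma~\ref{lemma-swap} I may apply row and column permutations to place it inside $R_{ab,ab}\subseteq R_{n,m}$.

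The heart of the proof is the Key Claim: for every $A\subseteq R_{ab,ab}$ with $m,n>ab$ and every $t\ge 0$,
\[
\cT_{n,m}^t(A) \;=\; \cT_\infty^t(A)\cap R_{n,m}.
\]
I would prove this by induction on $t$. The inclusion $\subseteq$ is immediate since finite row/column counts are bounded above by infinite ones and $\cZ$ is downward closed. For $\supseteq$, the argument in the inductive step must show that whenever $(x,y)\in R_{n,m}$ newly activates in the infinite dynamics, the finite counts $R^{(y)}_{t-1},C^{(x)}_{t-1}$ already dominate a concave corner of $\cZ$. The key technical device is an auxiliary count comparison, proved by a parallel induction: denoting by $\alpha_\tau$ the number of rows whose right-region is active in $\cT_\infty^\tau(A)$ (equivalently, the column count of any column $x'\ge n$ in $\cT_\infty^\tau(A)$), I would show
\[
C^{(x)}_\tau \;\ge\; \alpha_\tau \quad \text{for every column $x<n$,}
\]
with equality when $x\ge ab$ (those columns are untouched by $A$). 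The symmetric statement $R^{(y)}_\tau\ge\beta_\tau$ holds as well. Now, any excess contribution to the infinite row/column count of $(x,y)$ from outside $R_{n,m}$ can arise only if the corresponding right-activation condition $(R^{(y)}_{\tau-1},\alpha_{\tau-1})\notin\cZ$ held at some earlier time $\tau\le t$; combining this with the auxiliary comparison and downward closure of $\cZ$ forces $(R^{(y)}_t,C^{(x)}_t)\notin\cZ$, which is exactly what the finite dynamics needs.

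Given the Key Claim, the inequality $\gamma^L(\cZ,m,n)\le\gamma^L(\cZ,\infty,\infty)$ is immediate: for a minimizer $A^*$ of the right-hand side placed inside $R_{ab,ab}$, we have $\cT_{n,m}^L(A^*)=\cT_\infty^L(A^*)\cap R_{n,m}=R_{n,m}$. For the reverse inequality I plan a one-step extension: if $A^*\subseteq R_{ab,ab}$ spans $R_{n,m}$ in $L$ steps, then it also spans $R_{n+1,m}$ in $L$ steps. Indeed, the Key Claim gives $\cT_{n+1,m}^L(A^*)\cap R_{n,m}=R_{n,m}$, and for each $y\in[0,m)$ the new cell $(n,y)$ enters at time $L$ iff $(R^{(y)}_{L-1},\alpha_{L-1})\notin\cZ$ (applying the auxiliary claim to column $n\ge ab$). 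But this condition is forced by $A^*$ spanning $R_{n,m}$: in particular the cell $(ab,ab)\in R_{n,m}\setminus A^*$ must belong to $\cT_{n,m}^L(A^*)$, and its activation (using $C^{(ab)}_{L-1}=\alpha_{L-1}$, $R^{(ab)}_{L-1}=\beta_{L-1}$) yields $(\beta_{L-1},\alpha_{L-1})\notin\cZ$; a parallel argument using any $x\in[ab,n)$ and the given $y$ covers the remaining rows. Symmetric reasoning handles row extension, so by iterating, $A^*$ spans $R_{n',m'}$ in $L$ steps for every $n'\ge n$, $m'\ge m$. A final application of the Key Claim at arbitrarily large $n',m'$ then gives $\cT_\infty^L(A^*)=R_\infty$.

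The main obstacle is the auxiliary comparison $C^{(x)}_\tau\ge\alpha_\tau$ (and its row-analog): both quantities are defined by mutually recursive conditions of the form $(\cdot,\cdot)\notin\cZ$, and establishing the inequality requires tracking those recursions carefully and repeatedly applying downward closure. Once this is in place, both the Key Claim and the one-step extension fall out cleanly.
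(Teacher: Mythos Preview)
Your plan works, but it is considerably more elaborate than the paper's argument. The paper avoids any direct comparison between the finite and infinite dynamics; instead it makes a single structural observation about the finite dynamics alone: once the initial set lies inside $[0,ab-1]^2$, whenever a point outside that square becomes occupied, its entire row (if the row index is $<ab$), its entire column (if the column index is $<ab$), or the whole rectangle (if both indices are $\ge ab$) fills at the same step. This holds because all columns with index $\ge ab$ are interchangeable, and any column with index $<ab$ has, at every time, at least the occupation of a column with index $\ge ab$ (symmetrically for rows). Hence the occupied set is always of the form $S_t\cup(\text{full rows in }F_r)\cup(\text{full columns in }F_c)$ with $S_t\subseteq R_{ab,ab}$ and $F_r,F_c\subseteq[0,ab-1]$, and the update of $(S_t,F_r,F_c)$ never refers to $m$ or $n$; the lemma follows immediately. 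Your Key Claim is true and is essentially a reformulation of this invariant, but reaching it through a coupled induction with the auxiliary count inequality is heavier machinery. Two technical points if you pursue your route: first, your $\alpha_\tau$, defined as the full infinite column count of a column $x'\ge n$, becomes $\infty$ as soon as any point with both coordinates $\ge ab$ activates, which spoils the asserted equality $C^{(x)}_\tau=\alpha_\tau$ for $x\in[ab,n)$; you should restrict that count to rows $y<m$, or handle the terminal step separately. Second, the one-step extension is a detour: once the Key Claim holds, $A^*$ spanning $R_{n,m}$ gives $(n-1,y)\in\cT_\infty^L(A^*)$ for every $y<m$, and the column symmetry of the infinite dynamics (all columns $\ge ab$ behave identically) immediately gives every $(x',y)$ with $x'\ge ab$; the row analogue then yields $\cT_\infty^L(A^*)=R_{\infty,\infty}$ directly.
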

 
 \begin{proof}
Consider a set $D$ that realizes  $\gamma^L(\cZ,m,n)$. 
Then observe that every time a point $(i,j)$ outside $[0,ab-1]\times [0,ab-1]$
becomes occupied: if $i\le ab-1$, then the entire $i$th row gets occupied at the same time; if $j\le ab-1$, then the entire $j$th column gets occupied 
at the same time; and, therefore, when both $i,j\ge ab$, then the entire $R_{n,m}$ becomes occupied. 
As the argument is independent on the exact values of $m$ and $n$, provided that they are large enough, the claim remains true if we extend $m$ and $n$ to infinity.
\end{proof}

\begin{proof}[Proof of Theorem~\ref{thm-different-L}]
    The first statement is Lemma~\ref{lemma-large-R}. The second statement 
    follows from \cite[Theorem 1.1]{GPS} and \cite[Theorem 1]{GSS}. 
    We now address the three cases of the third statement, which can already be achieved with $m=n$; we assume so for the rest of the proof.

    When $L=0$ or $L=1$, (\ref{eq-different-L}) follows by taking 
    any fixed $\cZ$ and large $n$, but for concreteness we assume
    $\cZ$ is $T_a$ given by (\ref{eq-bootstrap}) for even $a\ll n$, so that
    $\gamma^1(\cZ,n,n)=an/2$ by (\ref{eq-boot-even-a}).
    Then $\gamma^0(\cZ,n,n)=n^2\gg an/2$ and  
    $\gamma^2(\cZ,n,n)\le a^2\ll an/2$, where the  inequality $\gamma^2(\cZ,n,n)\le a^2$ follows from the fact that the $a\times a$ square spans in two steps. 

    For $L=2$, consider 
    $$
    \cZ=[0,a-1]^2\cup (\{0\}\times [0,b-1])\cup ([0,b-1]\times \{0\}), 
    $$
    with $1\ll a\ll b\ll \sqrt n$. 
    Then we know by Lemma~\ref{lemma-gamma-2} 
    that $\gamma^2(\cZ,n,n)\ge ab$. On the other hand, if $\cZ$ is itself the initial 
    set, we get  
    $$
   \cT^1(\cZ)\supseteq ([0,a-1]\times [0,b-1])\cup ([0,b-1]\times [0,a-1]), 
    $$
    $$
   \cT^2(\cZ)\supseteq( [0,a-1]\times [0,n-1])\cup ([0,n-1]\times [0,a-1]), 
    $$
    and so $\cT^3(\cZ)=[0,n-1]^2$, that is, $\cZ$ is $\cZ$-dominating with latency $3$. Therefore, 
    $\gamma^3(\cZ)\le |\cZ|< a^2+2b\ll ab$.
\end{proof}

\section{Open problems and possible further directions}\label{sec-open}

The present work leaves open the following questions. 
\begin{enumerate}
    \item\label{open-problem-1} It is well known that computing the domination number of a graph is, in general, \textsf{NP}-hard.
    This remains true for distance-$L$ domination for any fixed finite $L$ (see, e.g.,~\cite{Hen}).
    Consequently, for any fixed finite $L$, the problem of computing $\gamma^L(T_1,G_1,G_2)$ is \textsf{NP}-hard (where $T_1 = \{(0,0)\}$), even if one of the two factors $G_1$ and $G_2$ is a one-vertex graph.
    (Note that the problem of computing $\gamma^\infty(T_1,G_1,G_2)$ is trivial.) 
    In particular, for any fixed finite $L$, the problem of computing $\gamma^L(\cZ,G_1,G_2)$ is \textsf{NP}-hard.
    However, these considerations tell nothing about the special case when both factors are complete, for any $L$ (finite or infinite).
    \begin{quote}
        What is the computational complexity of computing 
    $\gamma^L(\cZ,m,n)$?
    \end{quote}
    The complexity of the special case $L = m = n = \infty$ is the final open problem in~\cite{GSS}.
  
    \item \label{open-problem-2} Notice that if $\Pi$ is a Young domination (at latency $L$) on Hamming rectangles such that the Young diagram only has a constant number of concave corners, then $\Pi$ becomes \emph{sparse}: if the rectangle is of dimensions $m$ and $n$, and $k$ is the number of concave corners, then the number of possible instances is at most $\cO((mn)^k)$, which is a polynomial for every fixed $k$.
    Mahaney~\cite{Mah} proved that, if a sparse problem is \textsf{NP}-complete, then \textsf{P} = \textsf{NP}, which is unlikely. 
    This result says nothing about the general case (in which our approximation algorithms still apply), and it also leaves open the following problem.

    \begin{quote}
        Find an explicit polynomial-time algorithm for Young domination (at a fixed latency $L$) restricted to instances with constantly many concave corners.
    \end{quote}

    An analogous question can be asked for any variant of the problem in which we restrict the Young diagrams to any family that is polynomially-sized with respect to $mn$.  

      \item 
    Recall from \cite[Theorems 1 and 2]{GSS} that the value of $\gamma^{\infty}(\cZ,m,n)$ can be $4$-approximated in polynomial time, while for every finite $L$, Theorem~\ref{thm-const-approx} gives a polynomial-time algorithm that approximates $\gamma^L(\cZ,m,n)$ up to a constant factor in polynomial time, where both the approximation ratio and the exponent of the polynomial depend on $L$. 
    These results suggest the following questions.
    
    \begin{quote}
    Is there a constant $\delta>0$ such that for every $L$, there is an algorithm that approximates $\gamma^L(\cZ,m,n)$ up to factor $\delta$ in polynomial time? 
    \end{quote}
    
    \begin{quote}
    For finite $L$, is there an algorithm that approximates 
    $\gamma^L(\cZ,m,n)$ up to a constant factor 
    in polynomial time with powers independent of $L$? 
    \end{quote}

    \item Theorem~\ref{thm-different-L} showed that for $L = 0,1,2$, the Young domination number with latency $L$ is not bounded from above by any constant multiple of the Young domination number with any higher latency.
    This suggests the following question.
\begin{quote}
     Is (\ref{eq-different-L}) true for all $L\ge 0$?
\end{quote}   
    
\end{enumerate}
There are two natural directions for exploration of Young domination outside of Cartesian products of two graphs. For the first one, observe that our setting works on any graph with two-coloring of edges, i.e., on any ordered pair of two edge-disjoint graphs on the same vertex-set. 
The second direction is a multidimensional version. The zero-set is now a $d$-dimensional Young diagram, i.e., a downward-closed set $\cZ\subseteq \bZ_+^d$, and the dynamics proceeds on any graph with a $d$-coloring of edges, for example on a product graph $G_1\Box\cdots \Box G_d$ of $d$ graphs, with the natural coloring. 

\section*{Acknowledgments} 

The authors are grateful to Michael Lampis for helpful discussions.
JG was partially supported by the Slovenian Research and Innovation Agency research program P1-0285 and Simons Foundation Award \#709425. 
MK was partially supported by the Slovenian Research and Innovation Agency (research program P1-0383 and research project N1-0370).
MM was partially supported by the Slovenian Research and Innovation Agency 
(I0-0035, research program P1-0285 and research projects J1-60012, J1-70035, J1-70046, and N1-0370) and by the research program CogniCom (0013103) at the University of Primorska.

\end{document}